\documentclass[reqno,11pt]{amsart}
\usepackage{amsthm,amsmath,amsfonts,amssymb,amscd,mathrsfs,graphics,diagrams}
\usepackage{txfonts}%,pxfonts}
\usepackage{hyperref,supertabular}

\DeclareMathOperator{\LG}{LG}
\DeclareMathOperator{\CY}{CY}
%

 %classifying stack of G

\newcommand{\CP}{{\mathbb{CP}}}
\newcommand{\C}{{\mathbb{C}}}

%\D is the tau-function of GW-invariants.
\newcommand{\E}{{\mathscr E}}

 %G-state space

 %an orbi-linebundle

 %the moduli space

 %residue

\newcommand{\Z}{{\mathbb{Z}}}

 %the complete "type" of the W-structure--the image of the total evaluation map.

 %another orbicurve obtained from \cC
 %an orbicurve obtained from \cC
 %an orbicurve

 %%n-point correlator

 %topological euler class

%%the field of definition for our G-Frob algebras.  Probably C, but maybe a subfield of C

 % used to indicate the entire data of a W-curve.

 %unglued graph.

 %the diagonal group of auts, formerly called H (H is too confusing, since it is also cohomology and also often the state space of the theory).

 %%the inclusion of M_\tilde{W} into \M_W

 %Jacobian ideal

%Laplace transform

%\newcommand{\p}{{\mathbf p}}
 %the map from fibered product to \MM_W(\gc) in the cutting diagrams.

%%the inclusion of \IC into \cC

\newcommand{\res}{\operatorname{res}}
 %Restriction

 %softening map (forget rigidification}
%\newcommand{\str}{st^{\mathrm{rig}}} %rigidified stabilization map
 %stabilization

%\newcommand{\vtr}{\vartheta^{rig}} %rigidified forgetting tails map.
 %%witten map in derived category

%%%%%%%%%%%%%%%%%%%%%%%%%%%%%%%%%%%%%%%%%%%%%fhj's notation

%%%%%%%%%%%%%%%%%%%%%%%%%%%%%%%%%%%%%%%%%%%%%%%%%
\newcommand{\pat}{{\partial}}

%%%%%%%%%%%%%%%%%%%%%%%%%%%%%%%%%%%%%%%%%%%%%%%%%

\newtheorem{thm}{Theorem}[section]
\newtheorem{lm}[thm]{Lemma}

\newtheorem{crl}[thm]{Corollary}

\newtheorem{conj}[thm]{Conjecture}

\theoremstyle{definition}
\newtheorem{rem}[thm]{Remark}

\newtheorem{df}[thm]{Definition}

\theoremstyle{remark}

%%%%%%%%%%%%%%%%%%%%%%%%%%%%%%%%%%%%%%%%%%%%%%%%%%%%%%

\title{LG/CY correspondence between $tt^*$ geometries}
\date{\today}
\author{Huijun Fan$^\dag$}
\address{School of Mathematical Sciences, Peking University, Beijing, China}
\email{fanhj@math.pku.edu.cn}
\author{Tian Lan}
\address{School of Mathematical Sciences, Peking University, Beijing, China}
\email{1701110008@pku.edu.cn}
\author{Zongrui Yang}
\address{School of Mathematical Sciences, Peking University, Beijing, China}
\email{1600010619@pku.edu.cn}
\thanks{$^\dag$ Supported by NSFC (11271028, 11325101,11831017,11890660, 11890661).}

\begin{document}
\maketitle
\begin{center}
\emph{Dedicate to Prof.  Armen Glebovich Sergeev for his birthday}
\end{center}

\begin{abstract} The concept of $tt^*$ geometric structure was introduced by physicists (see \cite{CV1, BCOV} and references therein) , and then studied firstly in mathematics by C. Hertling \cite{Het1}. It is believed that the $tt^*$ geometric structure contains the whole genus $0$ information of a two dimensional topological field theory. In this paper, we propose the LG/CY correspondence conjecture for $tt^*$ geometry and obtain the following result. Let $f\in\mathbb{C}[z_0, \dots, z_{n+2}]$ be a nondegenerate homogeneous polynomialof degree $n+2$, then it defines a Calabi-Yau model represented by a Calabi-Yau hypersurface $X_f$ in $\mathbb{CP}^{n+1}$ or a Landau-Ginzburg model represented by a hypersurface singularity $(\C^{n+2}, f)$, both can be written as a $tt^*$ structure. We proved that there exists a $tt^*$ substructure on Landau-Ginzburg side, which should correspond to the $tt^*$ structure from variation of Hodge structures in Calabi-Yau side.  We build the isomorphism of almost all structures in $tt^*$ geometries between these two models except the isomorphism between real structures.  
\end{abstract}

\tableofcontents
\section{Introduction}

In the early of 1980's, two physicists B. Greene and R. Plesser \cite{GP} found a mysterious duality of Hodge numbers between two T-dual Calabi-Yau 3-folds $M$ and $\check{M}$ (called mirror pair):$h^{3-p,q}(M)=h^{p,q}(\check{M})$. In particular, there is  $h^{1,1}(M)=h^{2,1}(\check{M})$. This observation opened the gate to the vast field of mirror symmetry. The equality reflects the equivalence between two quantized theories. One is realized by quantum invariants of symplectic geometry (called A theory or A-model) and the other is realized by quantum invariants of complex geometry in the dual Calabi-Yau manifold (called B theory or B-model). The quantum theory of symplectic geometry is just the Gromov-Witten theory, whose all genus invariants were already defined. The higher genus quantum invariants of the complex geometry have not been build rigorously in mathematics despit of much computation in physical side (see \cite{BCOV, HKQ} and consequent work).   The genus $0$ invariant of the complex geometry of a Calabi-Yau manifold was understood clearly, which is related to the deformation theory of the complex structure, involving the "classical" geometrical structures such as variation of Hodge structure, Gauss-Manin connection, period integration and etc. The proof of genus $0$ mirror symmetry conjecture is the main theorem of the long monograph \cite{HKKPTVVZ} which elucidates the mirror symmetry phenomenon from the mathematical and physical sides. Since then, there are much work concerning the proof of mirror symmetry conjecture between Gromov-Witten theory and the physical BCOV's theory (see \cite{Zi}). Though there are some efforts (see \cite{CL}) to build the higher genus invariant based on the genus $0$ theory, the complete and rigorous mathematical theory 
is still unknown. 

The A theory of a Calabi-Yau (CY) model is the Gromov-Witten theory  correspondes to the Calabi-Yau nonlinear sigma model in physics. The minimizer of the Lagrangian is the holomorphic map from the genus $g$ Riemann surface to the Calabi-Yau manifold. There is another much related model, called Landau-Ginzburg (LG) model, which consists of a noncompact Kahler manifold with a holomorphic function over it, called superpotential. The A theory of LG model was constructed by the first auther, Jarvis and Ruan based on Witten's r-spin theory, called FJRW theory (see \cite{FJR} and consequenct work). Now people realize that the mirror symmetry is not only the dual equivalence between A theory and B theory of Calabi-Yau mirror pair, but forms a sophisticated duality relations between the A and B theories of CY model and LG model. This global dual relation can be briefly described by the following diagram:
\begin{equation}\label{intr-mirr-diag}
\begin{diagram}
\text{LG A theory} &\rTo^{mirror} & \text{LG B theory}\\
\dTo^{\text{\small LG/CY corresp.}}& &\dTo_{\text{\small LG/CY corresp.}}\\
\text{CY A theory} &\rTo^{mirror} & \text{CY B theory}
\end{diagram}
\end{equation}
The above duality is called the global mirror symmetry and the interested reader can find the precise explanation of the above diagram in \cite{CIR}. 

In the following, we take an example to show the duality conjecture in (\ref{intr-mirr-diag}). Let $\C^5$ be the standard complex Euclidean space and $f(z_0,\dots, z_4)=z_0^5+\cdots+z_4^5$ be the quintic polynomial. Then the tuple $(\C^5, f)$ forms a LG model. On the other hand, the zero locus of $f$ in $\CP^4$ defines a Calabi-Yau hypersurface $X_f=\{[z_0,\dots, z_4]\in \CP^4| f(z_0\dots, z_4)=0\}$. This is the Calabi-Yau model. Because of the supersymmetric requirement in physics, we need consider the action of symmetry group of $f$. 

The maximal symmetry group of $f$ is given by 
$$
G_{max}=\left\{(e^{i2\pi \theta_0},\dots, e^{i2\pi \theta_4})|\theta_i=\frac{k}{5}, i,k=0,1,\dots,4\right\}=\Z_5\times \dots \times \Z_5
$$
There is a cyclic subgroup $\langle J\rangle$ of $G_{max}$ generated by the following element 
$$
J=(e^{i2\pi/5 },\dots, e^{i2\pi/5}). 
$$
We say a subgroup $G$ is admissible, if $G$ satisfies $\langle J\rangle <G<G_{max}$. Define $\tilde{G}=G/\langle J\rangle$.

The real LG A model is given by the triple $(\C^5, G, f)$. The mirror LG B model is given by $(\C^5, G^T, f^T=f)$, where $(G^T, f^T)$ is the dual group of $G$ and the dual polynomial of $f$ determined by Berglund-H\"ubsch-Krawitz correspondence (see \cite{Kr}). In the A side, the LG to CY correspondence is given by the correpondence between the LG A theory of  $(\C^5, G, f)$ and the CY A theory of $(X_f/\tilde{G})$. 

The LG/CY correspondence conjecture in A theory says that the FJRW theory of $(\C^5, G, f)$ is equivalent to the GW theory of the orbifold $(X_f/\tilde{G})$. The LG/CY correpondence for this quintic polynomial has been extensively studied in the past ten years. The genus 0 and 1 cases have been proved and higher genus case has been made big progress (see \cite{CR1,CR2,GS}). Another approach to prove the LG/CY correspondence in A theory is using an unified  theory, the so called gauged linear sigma model proposed by Witten \cite{Wi}. Recently, there are much progress along this way (see \cite{FJR2, CFGKS, TX}). There are also many achievements in the LG to LG mirror conjecture (see \cite{HLSW} and reference therein).  When $G=G_{max}$ (in this case, $G^T=\{1\}$), then the LG B theory is given by K. Saito's flat structure theory \cite{S1, S3, S4} (i.e, Frobenius manifold structure determined by a primitive form) plus Givental's quantization theory. However, if $G^T$ is not the trivial group, the LG B theory has not been defined in general. 

The picture of (\ref{intr-mirr-diag}) has also a categoric version in genus $0$, give by the homological mirror symmetry conjecture proposed by Kontsevich and improved later. The interesting thing is that in this version the A model correspondence is unclear but the B model correspondence was already proved by D. Orlov (see \cite{Or}). 

However, all theories appeared in the above discussion, like GW theory, FJRW theory, Saito's flat structure theory, Givental's quantization theory, Orlov's proof of LG/CY correspondence in B side, are holomorphic theories, i. e, all the generating functions appeared in these theory are formal series of holomorphic coordinates or studied in algebraic geometry category.  However, Bershadsky-Cecotti-Ooguri-Vafa found the holomorphic anomaly equation (see \cite{BCOV}), which involves the anti-holomorphic variables of a generating function, and its holomorphic part gives the holomorphic generating function. This implies that in the B side, there should exist a "big theory" containing the holomorphic and the anti-holomorphic information. The so called BCOV theory has been studied and many interesting results have been discovered (see e. g.,  \cite{YY} and \cite{CL}). The genus $0$ part of this "big theory" was considered in \cite{BCOV} and by Strongminger \cite{St}, which is constructed on the universal deformation space of CY 3-fold. This is the prototype of $tt^*$ geometric structure we want to study in this paper. 

 Cecotti and Vafa \cite{CV1} considered the topological and anti-topological fusion of $N=2$ topological field theory and they generalized the special geometry relation to the $tt^*$ geometry structure. Dubrovin studied the integrability of this geometry structure and gave the decription of the correponding isomonodromic deformation \cite{Du}.Later, the $tt^*$ geometric structure was  systematically studied  by C. Hertling \cite{Het1}, who defined the TERP structure and related it to various known structures, e. g.,  variation of twistor structure\cite{Si}, the Cecotti-Vafa(CV) structure, Frobenius type strcture and variation of Hodge structure. In particular,  Combining the CV structure with the Frobenius type structure he defined the CDV structure, which is the Frobenius manifold structure plus an extra real structure compatible with the real tagent bundle. He also used the oscillating integration to construct a TERP structure.

Another approach to build a $tt^*$ geometry on the deformation space of a LG model was developed by the first author \cite{Fan}. By considering the spectrum theory of a twisted Laplacian operator, he can obtain the VHS on the deformation space of the superpotential. By subtle analytical technique, he can obtain the $tt^*$ geometric structure.  We will describe his theory in detail in Section \ref{sec2}. A simplified construction can be found in \cite{T}.

To formulate our result on LG/CY correpondence between $tt^*$ geometry structure, we first give the definition of $tt^*$ geometry (ref. \cite{Het1, I, Fan, T}). 
\begin{df} [$tt^*$ geometry]
A $tt^*$ geometry structure $\E=(H\longrightarrow M,\kappa,\eta,D,C,\bar{C})$ consists of the following data
\begin{itemize}
\item $H\longrightarrow M$ is a complex vector bundle (called the Hodge bundle);
\item a complex anti-linear involution $\kappa: H\longrightarrow H$, i.e. $\kappa^2=Id, \kappa(\lambda \alpha)=\bar{\lambda}\kappa(\alpha), \forall\lambda\in \mathbb{C}$ ($\kappa$ is called the real form);
\item $\eta$ is a nondegenerate pairing on $H$ and together with the real form $\kappa$ induces a Hermitian metric $g(u,v)=\eta(u, \kappa v)$ on $H$ (called $tt^*$ metric);
\item a flat connection $\nabla=D+\bar{D}+C+\bar{C}$ on $H$, where $D+\bar{D}$ is the Chern connection of $g$ (with respect to the holomorphic structure given by $\bar{D}$), $C$ and $\bar{C}$ are $C^{\infty}$(M)-linear maps
\begin{displaymath}
C:C^{\infty}(H)\longrightarrow C^{\infty}(H)\otimes \mathcal{A}^{1,0}(M),\; \bar{C}:C^{\infty}(H)\longrightarrow C^{\infty}(H)\otimes \mathcal{A}^{0,1}(M)
\end{displaymath}
satisfying
\begin{enumerate}
\item $g$ is real with respect to $\kappa$, i.e. $g(\kappa(u),\kappa(v))=\overline{g(u,v)}$.
\item $(D+\bar{D})\kappa=0, \bar{C}=\kappa\circ C\circ\kappa$.
\item $\bar{C}$ is the adjoint of $C$ with respect to $g$, i.e. $g(C_Xu,v)=g(u,\bar{C}_{\bar {X}}v)$.
\end{enumerate}
The operator $C$ is called the Higgs field and the connection $D+C$ is called the Gauss-Manin connection. 
\end{itemize}
\end{df}

Note that the vector bundle $H\longrightarrow M$ is considered as a holomorphic vector bundle, where the holomorphic structure is given by the flat connection $\nabla$. 

\begin{df} Let $\E_i=(H_i\longrightarrow M_i,\kappa_i,\eta_i,D_i,C_i,\bar{C_i}), i=1,2,$ be two $tt^*$ geometric structures. An embedding $\Phi=(\phi, \phi^{\prime})$ of  two holomorphic bundles
\begin{diagram}
H_1&\rTo^{\phi^{\prime}}& H_2\\
\dTo& & \dTo\\
M_1& \rTo^{\phi}& M_2
\end{diagram}
is called an embedding from the $tt^*$ geometric structure $\E_1$ to $\E_2$ if the following hold: $\forall p\in M_1,  X\in T_p M_1, u, v\in (H_1)_p$, 
\begin{enumerate}
\item $\eta_1(u, v)=\eta_2\circ \phi (\phi^{\prime}(u), \phi^{\prime}(v))$.
\item $\kappa_2\circ \phi^{\prime}=\phi^{\prime}\circ \kappa_1$.
\item $\phi^{\prime}((D_1)_X u)=(D_2)_{\phi_*(X)}(\phi^{\prime}(u))$ and $\phi^{\prime}\circ \kappa_1((\bar{D}_1)_{\bar{X}} (\kappa_1(u)))=(\bar{D}_2)_{\overline{\phi_*({X})}}(\kappa_2(\phi^{\prime}(u))$

\item $\phi^{\prime}((C_1)_X u)=(C_2)_{\phi_*(X)}(\phi^{\prime}(u))$
\end{enumerate}
\end{df}

\begin{df}A homogeneous polynomial $f$ defined on $\C^{n+2}$ is called nondegenerate if $0$ is its sole critical point. 
\end{df}
Assume that $f$ is a nondegenerate homogeneous polynomial. The universal deformation of the singularity $(\C^{n+2},f)$ is determined by the Milnor ring 
$$
R_f=\C[z_0,\dots, z_{n+1}]/(\frac{\pat f}{\pat z_0}, \dots, \frac{\pat f}{\pat z_0}).
$$ 
A marginal deformation is a part of the universal deformation given by  
$$
f_u(z)=f(z)+\sum_{j=1}^{k}u_j\phi_j(z),\phi_i \in R_f, \deg\phi_i=\deg f,\quad i=1,\dots, k, 
$$
where $k$ is the dimension of the marginal part in $R_f$. 

On the other hand, we obtain the smooth hypersurface $X_f\subset \CP^{n+1}$ defined by the zero locus of $f$.  About the deformation of complex structures, we have the following result proved in \cite[Theorem 3.21]{Fan}.
\begin{thm}[LG/CY correspondence between moduli numbers]\label{intr-thm-modu-numb} Suppose $f\in\mathbb{C}[z_0, \dots, z_{n+1}]$ is a nondegenerate homogeneous polynomial of degree $d$, then it defines an n-dimensional smooth hypersurface $X_f\subset\mathbb{CP}^{n+1}$. If $n\geq 2$, $d\geq 3$ but except the case $n=2$, $d=4$, then the universal deformation space of $X_f$ is $H^1(X_f, T_{X_f})$, whose dimension 
$m=\dbinom{n+1+d}{d}-(n+2)^2$
equals to the number of marginal deformations in the universal unfolding of the hypersurface singularity $f$. 
\end{thm}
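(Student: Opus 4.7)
The plan is to identify both sides of the claimed equality with $\dim(R_f)_d$, the degree-$d$ piece of the Milnor ring. On the LG side, nondegeneracy of $f$ forces the partials $\pat f/\pat z_i$ (of common degree $d-1$) to form a regular sequence, so the evaluation map
\begin{equation*}
\C[z_0,\dots,z_{n+1}]_1^{\oplus(n+2)}\longrightarrow (J_f)_d,\qquad (g_0,\dots,g_{n+1})\longmapsto \sum_{i=0}^{n+1} g_i\,\pat f/\pat z_i,
\end{equation*}
is surjective, with kernel controlled by the first Koszul syzygies, which are concentrated in polynomial degree $2(d-1)$. For $d\geq 3$ we have $2(d-1)>d$, so the map is injective; hence $\dim(J_f)_d=(n+2)^2$ and the marginal count becomes $\binom{n+1+d}{d}-(n+2)^2$, as asserted.

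On the CY side, I would combine the normal bundle sequence $0\to T_{X_f}\to T_{\CP^{n+1}}|_{X_f}\to \O_{X_f}(d)\to 0$ with the restricted Euler sequence $0\to \O_{X_f}\to \O_{X_f}(1)^{\oplus(n+2)}\to T_{\CP^{n+1}}|_{X_f}\to 0$. Granting for the moment that $H^0(T_{X_f})=0$ and $H^1(T_{\CP^{n+1}}|_{X_f})=0$, the long exact sequence of the first yields
\begin{equation*}
H^1(X_f,T_{X_f})\;\cong\;\coker\bigl(H^0(T_{\CP^{n+1}}|_{X_f})\longrightarrow H^0(\O_{X_f}(d))\bigr).
\end{equation*}
The ideal sheaf sequence gives $H^0(\O_{X_f}(d))=\C[z]_d/\C\cdot f$, and a section of $T_{\CP^{n+1}}|_{X_f}$, represented via the Euler sequence by a tuple $(\ell_0,\dots,\ell_{n+1})\in\C[z]_1^{\oplus(n+2)}$ modulo the Euler vector $(z_0,\dots,z_{n+1})$, maps to $\sum\ell_i\,\pat f/\pat z_i \pmod{f}$. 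The image is therefore precisely $(J_f)_d/\C f$, and the cokernel is $\C[z]_d/(J_f)_d=(R_f)_d$, matching the LG count.

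The technical heart of the argument is the vanishing $H^1(T_{\CP^{n+1}}|_{X_f})=0$, which is what isolates the excluded case. The Euler sequence reduces it to $H^1(\O_{X_f}(1))=0$ (standard from the ideal sheaf sequence) together with injectivity of the map $H^2(\O_{X_f})\to H^2(\O_{X_f}(1))^{\oplus(n+2)}$. For $n\geq 3$, Bott vanishing forces $H^2(\O_{X_f})=0$ and everything is automatic. For $n=2$, adjunction gives $\omega_{X_f}=\O_{X_f}(d-4)$, and Serre duality translates the required injectivity into surjectivity of the multiplication
\begin{equation*}
\C[z]_{d-5}^{\oplus 4}\longrightarrow \C[z]_{d-4},\qquad (h_0,\dots,h_3)\longmapsto \sum_i z_i h_i.
\end{equation*}
This is vacuous for $d=3$ (cubic surface, $H^2(\O_{X_f})=0$), holds by an elementary monomial argument for $d\geq 5$, but fails exactly for $d=4$, where a surviving $H^2(\O_{X_f})\cong\C$ contributes one extra infinitesimal deformation (the K3 case). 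This $n=2$ case analysis is the main obstacle and pinpoints precisely why $(n,d)=(2,4)$ must be excluded; the side assumption $H^0(T_{X_f})=0$ holds for all smooth hypersurfaces of degree $\geq 3$ with $n\geq 2$ (a theorem of Matsumura–Monsky type) and can be verified along the way by similar cohomology bookkeeping.
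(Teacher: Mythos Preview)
The paper itself does not supply a proof of this theorem; it simply quotes the result from \cite[Theorem~3.21]{Fan}. So there is no in-paper argument to compare against. That said, your proposal is correct and is essentially the standard route one would expect: identify both sides with $\dim (R_f)_d$, using Koszul regularity of the partials on the LG side and the combination of the normal bundle sequence with the restricted Euler sequence on the CY side. Your identification of the cokernel with $(R_f)_d$ is exactly the infinitesimal Torelli computation of Carlson--Griffiths, and your Serre-dual analysis of the map $H^2(\O_{X_f})\to H^2(\O_{X_f}(1))^{\oplus 4}$ for $n=2$ correctly isolates $(n,d)=(2,4)$ as the unique failure (the extra transcendental deformation of the K3). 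Two small remarks: (i) the injectivity of $H^0(T_{\CP^{n+1}}|_{X_f})\to H^0(\O_{X_f}(d))$, and hence $H^0(T_{X_f})=0$, actually falls out of your LG-side dimension count $\dim(J_f)_d=(n+2)^2$, so you need not invoke Matsumura--Monsky separately; (ii) the assertion that ``the universal deformation space \emph{is} $H^1(X_f,T_{X_f})$'' includes unobstructedness, which your computation implicitly yields since the surjectivity $H^0(\O_{X_f}(d))\twoheadrightarrow H^1(T_{X_f})$ shows every first-order deformation comes from varying the defining polynomial and hence integrates.
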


\begin{rem}\label{intr-rem-modu-numb} The case of $n=2$, $d=4$ corresponds to the K3-surface, whose deformation dimension as a complex manifold is $20$, while its algebraic deformation dimension is $19$ which equals the dimension of the marginal deformation of $f$ as a singularity. 
\end{rem}

Assume that the degree $d$ of $f$ equals $n+2$, then $X_f$ is a Calabi-Yau hypersurface. There are also interesting geometric structures on the deformation space $H^1(X_f, T_{X_f})$ of $X_f$. According to the Tian-Todorov lemma, there is no obstruction for the deformation and the deformation space is just $H^1(X_f, T_{X_f})$. However, this space is too small to form a Frobenius manifold structure. By considering the polyvector fields, Barannikov and Kontsevich \cite{BK}   constructed a Frobenius manifold structure on the "extended moduli space of deformations" for any Calabi-Yau manifold $X$, which contains the classical deformation space $H^1(X, T_{X})$. Later this was developed into a deformation theory of DGBV algebra (see \cite{Ma}). However, there is no global $tt^*$ geometry structure over the extended moduli space. 

By Theorem \ref{intr-thm-modu-numb} and Remark \ref{intr-thm-modu-numb}, we denote by $M$ the (algebraic) deformation parametric space of the complex structure of $X_f$ and let $0\in M$ represents the polynomial $f$ or the hypersurface $X_f$ in the parameter space $M$. In Section \ref{sec2}, we will construct the $tt^*$ geometry structure 
$$
\widehat{\E}^{\LG}=(\hat{H}^{\LG}\to M, \hat{\kappa}^{\LG},\hat{\eta}^{\LG}, \hat{D}^{\LG}, \hat{C}^{\LG}, \hat{\bar{C}}^{\LG})
$$ 
of LG model $(\C^{n+2}, f)$, and the $tt^*$ geometry structure 
$$
\E^{\CY}=(H^{\CY}\to M, \kappa^{\CY},\eta^{\CY}, D^{\CY}, C^{\CY}, \bar{C}^{\CY})
$$ 
of CY model $X_f$. Here the fiber $H^{\LG}\to M$ at $u\in M$ is given by $H^{\LG}_{f_u}\cong R_{f_u}$ and the fiber $H^{\CY}\to M$ at $u$ is given by $H^{\CY}_{f_u}\cong H^n_{prim}(X_{f_u})$, where $H^n_{prim}(X_{f_u})$ is the $n$-dimensional primitive cohomology of $X_{f_u}$. 

The paring $\hat{\eta}^{\LG}$ is given by the residue pairing 
\begin{equation}
\hat{\eta}^{\LG}_u(A, B):=\res_{f_u}(A, B), \forall A, B\in R_{f_u}. 
\end{equation}
The paring $\eta^{\CY}$ is given by the Poincare duality:
\begin{equation}
\eta^{\CY}(\alpha, \beta):=\int_{X_f}\alpha\cup\beta, \forall\alpha, \beta\in H_{prim}^n(X_f).
\end{equation}

The Higgs fields $\hat{C}^{\LG}$ and $C^{\CY}$ are defined by the multiplication on the fiber $\hat{H}^{\LG}$ and $H^{\CY}$. The multiplication of the elements on $\hat{H}^{\LG}_{f_u}$ is induced by the ordinary polynomial multiplication in $R_{f_u}$. The multiplication of the elements on $H^{\CY}_{f_u}$ is induced by the cup product of polyvector fields
\begin{equation}
H^p(\wedge^pTX_{f_u})\times H^q(\wedge^qTX_{f_u})\to H^{p+q}(\wedge^{p+q}TX_{f_u})
\end{equation}
and the contraction isomorphism
\begin{equation}
H^p(\wedge^pTX_f)\cong H^{n-p, p}(X_f),  s\mapsto s\dashv \Omega,
\end{equation}
where $\Omega$ is a holomorphic volume form on the Calabi-Yau manifold $X_f$.

For each point $u\in M$, there is a subspace 
$$
H^{\LG}_{f_u}\cong \oplus_{a=0}^\infty R_{f_u}^{(n+2)a}. 
$$
And let $M_{mar}$ be the subspace given by the marginal deforamtion in $M$. To get the LG-CY correspondence, we need to restrict the sub-bundle $H^{\LG}$ to the subspace  $M_{mar}$. \\
From the above discussion, we see that there is a natural choice of data
$$
\E^{\LG}=({H}^{\LG}\to M_{mar}, \kappa^{\LG},\eta^{\LG}, D^{\LG}, C^{\LG}, \bar{C}^{\LG})
$$ 
which we hope to be a $tt^*$ substructure of $\hat{\E}^{LG}$. We shall examine this in {\bf subsection} 2.3.\par
It is natural to state the following conjucture.

\begin{conj}[LG/CY correspondence for $tt^*$ geometries]\label{intr-conj-1} There exists isomorphism 
$$
\Phi: \E^{\LG}\to \E^{\CY}
$$
between two $tt^*$ geometry structure. 
\end{conj}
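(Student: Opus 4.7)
The plan is to construct the isomorphism $\Phi=(\mathrm{id}_M,\phi')$ explicitly via the Griffiths residue map, verify the algebraic conditions (1), (3), (4) from the definition of embedding by classical calculations on Jacobian rings, and then confront condition (2) -- compatibility of real structures -- as the genuine analytic difficulty. Fiberwise I would set
$$
\phi'_u:\; R_{f_u}^{(n+2)a} \;\ni\; P \;\longmapsto\; \Res\!\left(\frac{P\,\Omega_{\mathrm{std}}}{f_u^{\,a+1}}\right)\;\in\; H^{n-a,a}_{\mathrm{prim}}(X_{f_u}),
$$
where $\Omega_{\mathrm{std}}=\sum_{i=0}^{n+1}(-1)^i z_i\,dz_0\wedge\cdots\wedge\widehat{dz_i}\wedge\cdots\wedge dz_{n+1}$ is the standard holomorphic $(n+1)$-form on $\C^{n+2}$. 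Griffiths's theorem on smooth projective hypersurfaces shows this yields a graded $\C$-linear isomorphism on each fiber; Theorem \ref{intr-thm-modu-numb} and Remark \ref{intr-rem-modu-numb} guarantee the rank match on $M_{\mathrm{mar}}$, and holomorphic dependence on $u$ is immediate from the formula.

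With $\phi'$ in place the three algebraic compatibilities are essentially forced. Compatibility of pairings, $\eta^{\LG}=c\cdot \phi'^{*}\eta^{\CY}$ for a universal nonzero constant $c$ absorbable by rescaling, is the Carlson--Griffiths identification of the Jacobian residue pairing with the Poincar\'e pairing on primitive cohomology. Compatibility of the Higgs fields, $\phi'\circ C^{\LG}_X=C^{\CY}_X\circ \phi'$, follows by combining the construction of $C^{\CY}$ via polyvector contraction with $\Omega$ and the classical identification of the Yukawa coupling on $X_{f_u}$ with the residue of the triple product in $R_{f_u}$. For the holomorphic connection piece, differentiating the residue formula yields
$$
\pat_{u_j}\Res\!\left(\frac{P\,\Omega_{\mathrm{std}}}{f_u^{\,a+1}}\right) = \Res\!\left(\frac{\pat_{u_j}P\cdot \Omega_{\mathrm{std}}}{f_u^{\,a+1}}\right) - (a+1)\Res\!\left(\frac{P\,\phi_j\,\Omega_{\mathrm{std}}}{f_u^{\,a+2}}\right),
$$
with $\phi_j=\pat f/\pat u_j$; the first summand matches $D^{\CY}$ and the second matches $C^{\CY}_{\pat_{u_j}}$ once one splits by the Hodge filtration. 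This is the standard Griffiths transversality calculation.

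The main obstacle, flagged already in the abstract, is condition (2): intertwining the real structures. On the CY side $\kappa^{\CY}$ is complex conjugation with respect to $H^n_{\mathrm{prim}}(X_{f_u},\R)$, intrinsically defined by integration against real topological cycles. On the LG side $\kappa^{\LG}$ is constructed analytically in Section \ref{sec2} from the twisted $L^2$-theory of $(\C^{n+2},f_u)$, or equivalently (in Hertling's TERP framework) from oscillating integrals paired against Lefschetz thimbles. My strategy would be twofold. Geometrically, I would exploit the Milnor fibration of the homogeneous polynomial $f_u:\C^{n+2}\to\C$ together with Steenbrink's suspension isomorphism to identify the vanishing cohomology of the global Milnor fiber -- with its Betti integral structure carried by Lefschetz thimbles -- with $H^n_{\mathrm{prim}}(X_{f_u})$ endowed with its topological integral structure; under the Griffiths residue this should reduce the real-structure matching to a statement about thimble periods. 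Analytically, one must then show that Fan's spectral real structure on $L^2$-sections agrees, after stationary-phase asymptotics at the critical point of $f_u$, with the thimble real structure, which demands a WKB-type identification of the $L^2$-harmonic representatives with thimble integration currents. This last identification is precisely the ingredient the present paper leaves open, and completing it appears to be the genuine heart of the conjecture.
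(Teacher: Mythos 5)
Your construction coincides with the paper's: the fiberwise Griffiths residue map $[A]\mapsto(\res\Omega_A)^{n-a,a}$ on $\oplus_a R_{f_u}^{(n+2)a}$, the Carlson--Griffiths identity matching the residue pairing with the cup-product pairing, and the Yukawa/residue computation for the Higgs fields are exactly the content of Theorem \ref{intr-main-thm} and Section \ref{sec3}; and you correctly isolate the real-structure compatibility as the part that remains open --- the paper only formulates it as a conjecture in Section \ref{sec4}, after showing in subsection 2.2.3, via the monodromy of the Milnor fibration and oscillatory integrals over Lefschetz thimbles, that $H^{\LG}$ is at least \emph{closed} under $\hat{\kappa}^{\LG}$ (which is weaker than matching the two real structures).

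One caution: your claim that the connection piece is handled by ``the standard Griffiths transversality calculation'' overreaches. Transversality only gives the filtration statement $\nabla^{GM}F^{p}\subseteq F^{p-1}$; splitting $\nabla^{GM}$ into $D^{\CY}+C^{\CY}$ by Hodge \emph{type} uses the orthogonal projections $\Pi_{\mathcal{H}^{p,n-p}_{prim}}$, i.e.\ the conjugate filtration, and matching these with the harmonic projection $\Pi$ defining $D^{\LG}=\Pi\circ\partial_i$ on the LG side requires precisely the metric/real-structure correspondence you have deferred. In particular, the first summand in your differentiation formula lies only in $F^{n-a}H^n_{prim}$, not in $H^{n-a,a}_{prim}$, so it does not ``match $D^{\CY}$'' without the missing ingredient. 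This is why Theorem \ref{intr-main-thm} asserts only the fiber isomorphism, $\Phi^*\eta^{\CY}=\eta^{\LG}$ and $\Phi^*C^{\CY}=C^{\LG}$, and omits both $\kappa$ and $D$.
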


We can partially prove Conjecture \ref{intr-conj-1}. The main theorem of this paper is as follows:

\begin{thm}\label{intr-main-thm} There is a bundle isomorphism $\Phi: \hat{H}^{\LG}\to H^{\CY}$ such that the following holds:
\begin{enumerate}
\item when restricted to each fiber at $u\in M$, $\Phi$ is the residue map $r$ given by
\begin{align} 
r: H^{LG}_f\cong\oplus_{a=0}^\infty R_f^{(n+2)a}&\mapsto H^n_{prim}(X_f)=H^{CY}_f,\nonumber \\
[A]&\mapsto c_{n,a}(\res\Omega_A)^{n-a, a}, \deg A=(n+2)a, 
\end{align}
where $c_{n,a}$ is a constant depending on $n, a$. 
\item $\Phi^*\eta^{\CY}=\eta^{\LG}$. 
\item $\Phi^*C^{\CY}=C^{\LG}$. 
\end{enumerate}
\end{thm}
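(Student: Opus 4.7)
The plan is to construct $\Phi$ as the globalised Griffiths residue map and then check the three compatibility statements one by one, using in an essential way Griffiths' description of the Hodge filtration on a smooth projective hypersurface.

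First I would verify that the fibrewise map
$$
r_u:\bigoplus_{a=0}^{n}R_{f_u}^{(n+2)a}\longrightarrow H^n_{\mathrm{prim}}(X_{f_u}),\qquad
[A]\longmapsto c_{n,a}\,\res\!\left(\frac{A\,\Omega}{f_u^{a+1}}\right)^{n-a,a}
$$
is a $\C$-linear isomorphism for each $u\in M$, where $\Omega=\sum_i(-1)^i z_i\,dz_0\wedge\cdots\widehat{dz_i}\cdots\wedge dz_{n+1}$. This is Griffiths' classical theorem: the Jacobian ideal description of the graded pieces of the Hodge filtration identifies $R_{f_u}^{(n+2)a}\cong\mathrm{Gr}^{n-a}_F H^n_{\mathrm{prim}}(X_{f_u})$. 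Next, the Milnor rings $R_{f_u}$ fit into a coherent locally free sheaf on $M$, and the form $A\,\Omega/f_u^{a+1}$ depends holomorphically on $u$ for $A$ a local holomorphic section, so $r_u$ globalises to a holomorphic bundle map $\Phi:\hat H^{\LG}\to H^{\CY}$; being a fibrewise isomorphism, it is an isomorphism of holomorphic bundles. At this stage the constants $c_{n,a}$ are still free parameters to be pinned down by the remaining conditions.

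For (2), I would use the Carlson--Griffiths formula, which identifies the Grothendieck residue pairing on $R_{f_u}$ with the cup product pairing on $H^n_{\mathrm{prim}}(X_{f_u})$ up to an explicit universal constant $c'_{n,a}$ depending only on $n$ and the bidegree: for $A\in R_{f_u}^{(n+2)a}$ and $B\in R_{f_u}^{(n+2)(n-a)}$,
$$
\int_{X_{f_u}}\!\res\!\left(\frac{A\,\Omega}{f_u^{a+1}}\right)\cup\res\!\left(\frac{B\,\Omega}{f_u^{n-a+1}}\right)=c'_{n,a}\,\res_{f_u}(A,B).
$$
Condition (2) then becomes the numerical constraint $c_{n,a}c_{n,n-a}=1/c'_{n,a}$. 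For (3), the LG Higgs field $C^{\LG}_{\partial_{u_j}}$ is multiplication by $\phi_j\in R_{f_u}^{(n+2)}$, whereas the CY Higgs field is Kodaira--Spencer composed with contraction against $\Omega$. Differentiating the Griffiths representative under the integral and using that $\partial_{u_j}f_u=\phi_j$, one obtains
$$
\partial_{u_j}\!\left(\frac{A\,\Omega}{f_u^{a+1}}\right)\equiv-(a+1)\,\frac{\phi_j A\,\Omega}{f_u^{a+2}}\pmod{d(\cdots)},
$$
which under the residue projection is the Hodge-filtration-lowering piece of the Gauss--Manin derivative. Comparing with the polyvector interpretation of the CY Higgs field yields a second compatibility of the form $c_{n,a+1}=\lambda_a c_{n,a}$ for an explicit $\lambda_a$. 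The proof is finished by checking that the recursion in (3) is consistent with the symmetry relation produced in (2); the compatible normalisation $c_{n,a}$ is then unique up to a single overall scalar.

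The main obstacle is the simultaneous bookkeeping of the constants in steps (2) and (3): one has to verify that the universal constants $c'_{n,a}$ from Carlson--Griffiths and the constants $\lambda_a$ coming from the Griffiths transversality recursion fit into a compatible system for a single choice of $\{c_{n,a}\}$. Everything else is either classical or a direct differentiation argument; the arithmetic of the normalisations is where the genuine content of the matching resides, and it must be done carefully to make all three statements hold simultaneously. Once the constants are fixed, properties (1)--(3) follow in tandem from the formulas above.
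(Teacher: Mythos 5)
Your overall route is the same as the paper's: the fibrewise map is Griffiths' residue isomorphism $R_{f_u}^{(n+2)a}\cong H^{n-a,a}_{prim}(X_{f_u})$, item (2) is reduced to the Carlson--Griffiths formula comparing the cup-product pairing with the Grothendieck residue pairing, and item (3) is obtained by differentiating $A\Omega/f_u^{a+1}$ in the deformation parameter and projecting via Griffiths transversality, exactly as in the paper's Theorems 3.2, 3.6 and 3.11. The only real difference of style is normalisation: the paper does not solve for the constants, it fixes $c_a=\frac{(-1)^{n+a(a+1)/2}}{a!}$ from the \v{C}ech representative of $(\res\Omega_A)^{n-a,a}$ (Carlson--Griffiths, Prop.\ 3.1) and then verifies that pairings and Higgs fields match up to explicit constants and signs, which is all the theorem (with its unspecified $c_{n,a}$) requires; your recursion-plus-symmetry consistency check is harmless but heavier bookkeeping than is needed.

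There is, however, one genuine missing step. You treat the Landau--Ginzburg side as if its pairing and Higgs field were given directly on the Milnor ring by the Grothendieck residue pairing and by ring multiplication. In the actual $tt^*$ structure being compared, the fiber of $H^{\LG}$ is the space of $\Delta_f$-harmonic $(n+2)$-forms, the pairing is $\eta^{\LG}(\alpha,\beta)=\int_{\C^{n+2}}\alpha\wedge*\beta$, and the Higgs field is $\Pi\circ\partial_iF$ (harmonic projection of multiplication by $\phi_i$). So before Carlson--Griffiths can be invoked you must translate these data through the identification $\alpha=A\,dz_0\wedge\dots\wedge dz_{n+1}+\bar{\partial}_f\mu\mapsto[A]$. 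For the pairing this is the nontrivial analytic identity $\int_{\C^{n+2}}\alpha\wedge*\beta=k_{n+2}\,\Res_f(A,B)$, which in the paper is a separate theorem of Fan--Shen, proved by explicitly solving $\bar{\partial}_f\mu=dz_1\wedge\dots\wedge dz_n$ and recognising the Bochner--Martinelli kernel (Appendix A); it is not a formal consequence of anything on the Calabi--Yau side. For the Higgs field one also needs the short computation that $\Pi\circ\phi_i$ acts as $[A]\mapsto[\phi_iA]$ under this identification (the paper's Theorem 3.10); this part is easy, but your proposal asserts rather than proves it. With those two translation statements added, your argument closes and coincides with the paper's proof.
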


The proof of Theorem \ref{intr-main-thm} depends on the pioneer work of Griffiths and Carlson \cite{CG}. The proof will occupy Section \ref{sec3}. 

In Section \ref{sec4}, we will discuss the correspondence of real forms $\kappa^{LG}$ and $\kappa^{CY}$. For quintic polynomials, some examples were studied by physicists,for example,see Aleshkin and Belavin's work in \cite{AB}. However, the general case is unknown. 

\section{tt* geometry structure of LG and CY models}\label{sec2}

\begin{df} 
We say a polynomial W:$\mathbb{C}^n\longrightarrow \mathbb{C}$ is a quasi-homogeneous polynomial with weights $(q_1,...,q_n)\in \mathbb{Q}^n$ ($q_i=a_i/b_i$ and gcd($a_i,b_i$)=1), if for all $\lambda\in\mathbb{C}$, we have
\begin{displaymath}
W(\lambda^{q_1}z_1,...,\lambda^{q_n}z_n)=\lambda W(z_1,...,z_n)
\end{displaymath}
\end{df}

Assume that 0 is the only isolated critical point of $W$ in $\mathbb{C}^n$. This induces that $W$ defines a smooth hypersurface $X_W$ in the weighted projective space $\mathbb{CP}^{n-1}_{(q_1,...,q_n)}$. This hypersurface carries a natural Hodge structure given by the Hodge theory. The most interesting case is that when the weights $(q_1,...,q_n)$ satisfies the Calabi-Yau condition, the hypersurface $X_W$ will be a Calabi-Yau manifold. This is the condition we will consider in the rest of this paper.\par
Consider the famous Milnor fibration $W:\mathbb{C}^n-W^{-1}(0)\longrightarrow\mathbb{C}^*$(\cite{Mi}). It was proved that the fibre $W_t$($t\in \mathbb{C}^*$) has a homotopy type of a union of $\mu$ spheres of dimension $n-1$. Here $\mu$ is the dimension of the Milnor ring $R_W=\mathbb{C}[z_1,...,z_n]/(\partial_1 W,...,\partial_n W)$ as  a complex vector space. The cohomology group $H(W^{-1}(t),\mathbb{C})$ can also be identified with the Milnor ring by taking the Gelfand-Leray forms of some holomorphic $n$-forms (see {\bf subsection 2.3} and {\bf Appendix} C). These cohomology groups $\{H(W^{-1}(t),\mathbb{C})\lvert t\in\mathbb{C}^*\}$ form a flat bundle with the Gauss-Manin connection, which induces a variation of Hodge structures over $\mathbb{C}^*$. The calculation of monodromy will be given in {\bf Appendix} C, which will help us to get the main result in {\bf subsection} 2.3.  \par
With the weights $(q_1,...,q_n)$ we can define a charge on $\{A_idz_1\wedge...\wedge dz_n\}_{i=1}^{\mu}$ when the basis $\{A_i\}$ is given by monomials. The charge $l$ is defined to be 
\begin{displaymath}
l(z^Idz_1\wedge...\wedge dz_n)=\sum_{i=1}^n q_i(I_i+1)
\end{displaymath}
Then here comes the view point of LG-CY correspondence in this paper. It was first conjectured by physicists, when $\hat{c}=\sum_{i=1}^{\mu}(1-2q_i)$ is an integer, there will be an identification between the subset $R_W^{\prime}\in R_W$ (the elements with integer charges) and the Hodge structure on the hypersurface $X_W$ \cite{Ce2}. This is just the B-side of Gepner's idea \cite{Ge} about the LG-CY correspondence.\par

\subsection{$tt^*$ geometry of Landau-Ginzburg model}

Landau-Ginzburg B-model has intimate relations with singularity theory, which studies the germ of singularity of holomorphic functions.  To get the information of the singularity, one must study the deformation theory of the given holomorphic function. Some topological and geometrical information of the singularity appear when we study the geometric sturctures on the deformation parameter space. In the earlier of 80’s, a flat structure on the tangent bundle of deformation space was found by K. Saito and M. Saito \cite{S1} \cite{S2} in the study of the universal deformation of hypersurface singularities, which comes from the existence of `primitive forms'. Later, the same flat structure was found by B. Dubrovin in the study of topological field theory in early 90’s. This structure is called `Frobenius manifold' by B. Dubrovin. A good reference for the Frobenius structure in LG B-model is \cite{ST}.

In 2011, Fan gives a differential geometric approach to the Landau-Ginzburg B-model by a detailed study of twisted Cauchy-Riemann operators and the variation of Hodge structures induced by them. Fan constructed the $tt^*$ geometry associated to a (marginal or relevant) deformation of a nondegenerate quasi-homogeneous polynomial. In particular, when the deformation space is the universal unfolding, Fan finds out that this $tt^*$ structure induces the Frobenius structure constructed by K. Saito and M. Saito using the theory of primitive forms. Here we introduce this $tt^*$ structure. We follow the expositions in Tang \cite{T}.
\par
\begin{df}
Let $f\in\mathbb{C}[z_1, \dots, z_n]$ be a quasi-homogeneous polynomial, it is called non-degenerate if\\
(1) $f$ contains no monomial of the form $z_iz_j$ for $i\neq j$.\\
(2) $f$ has only an isolated singularity at the origin.
\end{df}
The state space of Landau-Ginzburg B-model is the deformation parameter space of the universal unfolding of the singularity $f$, which can be described by the Jacobi ring:
$$R_f:=\mathbb{C}[z_1,...,z_{n}]/I_f,$$ where $I_f=\left \langle \frac{\partial f}{\partial z_1}, \dots, \frac{\partial f}{\partial z_n} \right \rangle$ is the homogeneous ideal generated by $f$. The Milnor ring $R_f$ is finite-dimensional, and we call its dimension $\mu_f=\text{dim }R_f$ the Milnor number. Now we take a monomial basis $\phi_1,...,\phi_{\mu}$ of $R_f$, choose arbitrary $s$ elements $\phi_1,...,\phi_s$ among them and consider the deformation of $f$ given by
$$f(z, u)=f(z)+\sum_{j=1}^{s}u_j\phi_j(z).$$
We call $u_1, \dots, u_s$ the deformation parameters and we think $f(z, u)$ both as a deformation and as a quasi-homogeneous polynomial of $z, u$.
\begin{df}
The deformation parameter $u_j$ is called:\\
(1) relevant, if the weight of $u_j$ is positive;\\
(2) marginal, if the weight of $u_j$ is zero;\\
(3) irrelevant, if the weight of $u_j$ is negative.
\end{df}

Next we study the twisted Cauchy-Riemann operators and their Hodge decompositions. For a holomorphic function $f$ on $\mathbb{C}^n$, define the twisted operators:
$$\bar{\partial}_f=\bar{\partial}+\partial f\wedge,\quad\text{       }\partial_f=\partial+\bar{\partial} \bar{f}\wedge$$
and the twisted Laplacian$$\Delta_f=\bar{\partial}^{\dag}_f\bar{\partial}_f+\bar{\partial}_f\bar{\partial}^{\dag}_f,$$
where $\bar{\partial}^{\dag}_f=-*\partial_{-f}*$ and $\partial^{\dag}_f=-*\bar{\partial}_{-f}*$ are the adjoints of $\bar{\partial}_f$ and $\partial_f$.
Fan proved the following identities imitating classical Hodge theory:
\begin{lm}[\cite{Fan} Chapter 2]We have the following:
$$\partial_f^2=\bar{\partial}_f^2=0, \quad \bar{\partial_f}\partial_f+\partial_f\bar{\partial}_f=0,$$
$$(\bar{\partial}_f^{\dag})^2=(\partial^{\dag}_f)^2=0, \quad\bar{\partial}^{\dag}_f\partial^{\dag}_f+\partial^{\dag}_f\bar{\partial}^{\dag}_f=0,$$
$$[\partial_f,\bar{\partial}^{\dag}_f]=[\bar{\partial}_f,\partial^{\dag}_f]=0,$$
$$[\partial_f,\partial_f^{\dag}]=[\bar{\partial}_f,\bar{\partial}^{\dag}_f]=\Delta_f,$$
and the K$\ddot{a}$hler-Hodge identities:
$$[\partial_f,\Lambda]=-i\bar{\partial}^{\dag}_f, \quad [\bar{\partial}_f,\Lambda]=i\partial^{\dag}_f,$$
$$[\partial^{\dag}_f,L]=-i\bar{\partial}_f, \quad [\bar{\partial}^{\dag}_f,L]=i\partial_f,$$
where L is the Lefschetz operator and $\Lambda$=$*^{-1}\circ$L$\circ*$. We also know that $\Delta_f$ commutes with all the above operators.
\end{lm}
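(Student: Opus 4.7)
My plan is to establish the identities in the order they are listed, reducing each to its predecessors together with the classical Kähler identities on $\mathbb{C}^n$ with the flat metric.

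First I would verify the three relations on the first line by direct expansion of $\partial_f=\partial+\bar{\partial}\bar{f}\wedge$ and $\bar{\partial}_f=\bar{\partial}+\partial f\wedge$. The ingredients are: $\partial^2=\bar{\partial}^2=\partial\bar{\partial}+\bar{\partial}\partial=0$; the Leibniz rule, which yields $\bar{\partial}(\partial f\wedge\,\cdot\,)+(\partial f\wedge)\bar{\partial}=\bar{\partial}\partial f\wedge=-\partial\bar{\partial}f\wedge=0$ because $f$ is holomorphic, and symmetrically $\partial(\bar{\partial}\bar{f}\wedge\,\cdot\,)+(\bar{\partial}\bar{f}\wedge)\partial=0$; the vanishing of a $1$-form wedged with itself, giving $(\partial f)\wedge(\partial f)=(\bar{\partial}\bar{f})\wedge(\bar{\partial}\bar{f})=0$; and $\partial f\wedge\bar{\partial}\bar{f}+\bar{\partial}\bar{f}\wedge\partial f=0$ because the two factors are pure $1$-forms of opposite type. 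Collecting terms gives $\partial_f^2=\bar{\partial}_f^2=0$ and $\{\partial_f,\bar{\partial}_f\}=0$. The three relations on the second line follow at once by taking Hermitian adjoints, using $(AB)^\dag=B^\dag A^\dag$ together with the given formulas $\bar{\partial}_f^\dag=-*\partial_{-f}*$ and $\partial_f^\dag=-*\bar{\partial}_{-f}*$.

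The heart of the argument is the twisted K\"ahler-Hodge block. Writing $e_\alpha$ for wedge product with $\alpha$, I decompose
\begin{equation*}
[\partial_f,\Lambda]=[\partial,\Lambda]+[e_{\bar{\partial}\bar{f}},\Lambda].
\end{equation*}
The first summand equals $-i\bar{\partial}^\dag$ by the classical K\"ahler identity on flat $\mathbb{C}^n$. For the second I would establish the pointwise algebraic identity $[e_\alpha,\Lambda]=-i(e_{\bar{\alpha}})^\dag$ for an arbitrary $(0,1)$-form $\alpha$, which by $C^\infty$-linearity in the coefficients of $\alpha$ reduces to checking on the basis $\{d\bar{z}_k\}$ against the standard K\"ahler form $\omega=\tfrac{i}{2}\sum dz_j\wedge d\bar{z}_j$. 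Applied with $\alpha=\bar{\partial}\bar{f}$ this gives $[e_{\bar{\partial}\bar{f}},\Lambda]=-i(e_{\partial f})^\dag$, so the sum equals $-i\bar{\partial}_f^\dag$. The identity $[\bar{\partial}_f,\Lambda]=i\partial_f^\dag$ is obtained symmetrically, and the two dual relations $[\partial_f^\dag,L]=-i\bar{\partial}_f$, $[\bar{\partial}_f^\dag,L]=i\partial_f$ follow by taking Hermitian adjoints and using $L^\dag=\Lambda$.

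The remaining commutator and Laplacian identities are formal consequences of what has been proved, via the super-Jacobi identity for graded operators. From $\bar{\partial}_f^\dag=i[\partial_f,\Lambda]$ one has $\{\partial_f,\bar{\partial}_f^\dag\}=i\{\partial_f,[\partial_f,\Lambda]\}$, which unfolds to $0$ because $\partial_f^2=0$; the identity $\{\bar{\partial}_f,\partial_f^\dag\}=0$ is symmetric. For the Laplacian, super-Jacobi applied to the odd-odd-even triple $(\partial_f,\bar{\partial}_f,\Lambda)$, together with $\{\partial_f,\bar{\partial}_f\}=0$, yields $\{\partial_f,\partial_f^\dag\}=\{\bar{\partial}_f,\bar{\partial}_f^\dag\}=\Delta_f$. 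Commutation of $\Delta_f$ with $\partial_f$ and $\bar{\partial}_f$ then follows because $\partial_f^2=0$ forces $\partial_f\Delta_f=\partial_f\partial_f^\dag\partial_f=\Delta_f\partial_f$, and commutation with $L,\Lambda$ is obtained by rewriting $\Delta_f$ through the equivalent forms produced above. The only step that is not purely formal, and thus the main obstacle, is the sign-correct pointwise identity $[e_\alpha,\Lambda]=-i(e_{\bar{\alpha}})^\dag$; once it is in hand the twists by $\partial f\wedge$ and $\bar{\partial}\bar{f}\wedge$ add no essential difficulty beyond the classical K\"ahler case, and the rest of the statement falls out of graded commutator bookkeeping.
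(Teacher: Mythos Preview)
Your argument is correct and follows the standard route to these identities. Note, however, that the paper does not actually prove this lemma: it is stated with a citation to \cite{Fan}, Chapter~2, and no proof is given in the present text. So there is no ``paper's own proof'' to compare against here; what you have written is essentially the argument one finds in \cite{Fan} or in any treatment of twisted K\"ahler identities. The one point worth flagging is that your pointwise identity $[e_\alpha,\Lambda]=-i(e_{\bar\alpha})^\dag$ depends on the normalization of the flat metric on $\mathbb{C}^n$ (i.e.\ whether $|dz_j|^2=1$ or $2$), so when you carry out the basis check you should fix the convention consistently with the paper's definition of $*$ and of $\omega$; otherwise the sign or a factor of $2$ can slip. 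Everything else---the direct expansion for the first block, passage to adjoints for the second, and the super-Jacobi bookkeeping for the mixed commutators and the equality of the two Laplacians---is sound.
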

Next we introduce the concept of strongly tame functions and strong deformations, which can guarantee nice spectral behavior of the twisted Laplacian $\Delta_f$, which play crucial roles in the proof of Hodge decomposition for the twisted Cauchy Riemann operator $\partial_f$.
\begin{df}
(1) A holomorphic function $f$ on $\mathbb{C}^n$ is called tame if there is a constant $C>0$ such that $|\nabla f(z)|\geq C$ as 
$|z|\rightarrow\infty$. And if for any $C>0$, we have
$|\nabla f|^2-C|\nabla^2 f|\rightarrow \infty$ as $|z|\rightarrow\infty$, we call $f$ a strongly tame function.\\
(2) Let $F: \mathbb{C}^n\times S\longrightarrow\mathbb{C}$ be a deformation of $f$. It is called a strong deformation of $f$, if the following conditions hold:\\
(i) $\text{sup}_{t\in S}\mu(f_t)<\infty$.\\
(ii) For any $t\in S$, $f_t$ is strongly tame.\\
(iii) For any $t\in S$, $\Delta_{f_t}$ have common domains in the space of $L^2$ forms.
\end{df}
It was proved in \cite{Fan} {\bf Theorem} 2.43 that non-degenerated quasi-homogeneous polynomials with weights $q_i\leq\frac{1}{2}$ are strongly tame, and that marginal and revelant deformations of these polynomials are strong deformations. The concept of strongly tame functions gives the spectrum behavior of the twisted Laplacian as follows:
\begin{thm}[\cite{Fan} {\bf Theorem} 2.40]\textit{If f is a strongly tame holomorphic function, then $\Delta_f$ has purely discrete spectrum, and all the eigenforms form a complete basis of the Hilbert space $L^2(\Lambda^{*}(\mathbb{C}^n))$.}
\end{thm}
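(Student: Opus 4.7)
The plan is to recast $\Delta_f$ as a generalized Schr\"odinger-type operator acting on $L^2$-forms whose effective potential is confining, and then deduce discreteness of the spectrum from compactness of the resolvent. First I would derive an explicit Bochner-type expansion
\begin{equation}
\Delta_f = \Delta_0 + |\nabla f|^2 + R_f,
\end{equation}
where $\Delta_0 := \bar{\partial}\bar{\partial}^{\dag}+\bar{\partial}^{\dag}\bar{\partial}$ is the flat $\bar{\partial}$-Laplacian on $\Lambda^{*}(\mathbb{C}^n)$, the term $|\nabla f|^2$ acts by scalar multiplication, and $R_f$ is a zeroth-order bundle endomorphism arising from the commutators $[\partial f\wedge,\bar{\partial}^{\dag}]$ and $[\bar{\partial},(\partial f\wedge)^{\dag}]$. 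A direct unwinding of these commutators shows that $R_f$ is linear in the Hessian $\nabla^2 f$, hence admits the pointwise bound $\|R_f(z)\|_{\mathrm{op}}\leq C_n|\nabla^2 f(z)|$ with a constant $C_n$ depending only on $n$.

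Second, the strong-tameness hypothesis guarantees that for \emph{any} $C>0$ one has $|\nabla f|^2-C|\nabla^2 f|\to+\infty$ as $|z|\to\infty$. Taking $C=C_n$, the effective potential $V_f := |\nabla f|^2+R_f$ satisfies $\langle V_f u,u\rangle\geq \Phi(z)|u|^2$ pointwise, with a scalar function $\Phi(z)\to+\infty$ outside arbitrarily large balls. Standard Kato-Rellich arguments then show that $\Delta_f$ is essentially self-adjoint on $C_c^{\infty}(\Lambda^{*}(\mathbb{C}^n))$, and its form domain consists of $u\in L^2$ with $|\nabla u|\in L^2$ and $\Phi^{1/2}u\in L^2$.

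Third, I would show that $(\Delta_f+1)^{-1}$ is a compact operator on $L^2(\Lambda^{*}(\mathbb{C}^n))$ by proving that the form domain embeds compactly into $L^2$. Given a sequence $u_n$ bounded in the form norm, Rellich's theorem on balls yields a subsequence converging in $L^2_{\mathrm{loc}}$; the uniform bound $\int \Phi|u_n|^2<\infty$ together with $\Phi\to\infty$ lets one truncate the tails outside a sufficiently large ball with arbitrarily small $L^2$-mass, promoting local convergence to global $L^2$-convergence. With compact resolvent and self-adjointness in hand, the spectral theorem delivers a purely discrete spectrum of $\Delta_f$ whose eigenspaces are finite-dimensional, together with a complete orthonormal basis of $L^2(\Lambda^{*}(\mathbb{C}^n))$ consisting of eigenforms.

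The main obstacle is the bookkeeping in the first step: one must check that the cross-terms in $\Delta_f$ involving $\bar{\partial}$, $\bar{\partial}^{\dag}$, $\partial f\wedge$, and its adjoint really collapse to $|\nabla f|^2+R_f$ with $R_f$ a \emph{pure multiplication operator} by the Hessian, producing no first-order differential contributions that could compete with the confining potential. Once that pointwise decomposition is secured, the remainder of the argument -- Kato-Rellich self-adjointness, Persson-type characterization of the essential spectrum, and Rellich compactness with a confining weight -- is entirely standard and goes through verbatim.
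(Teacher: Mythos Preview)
The paper does not supply its own proof of this theorem; it is quoted verbatim from \cite{Fan} (Theorem 2.40) and invoked as a black box. So there is no in-paper argument to compare against.

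That said, your outline is correct and is precisely the method one expects in \cite{Fan} (whose very title advertises the Schr\"odinger-operator viewpoint). The Witten--Bochner expansion $\Delta_f=\Delta_0+|\nabla f|^2+R_f$ with $R_f$ a zeroth-order Hessian term is exactly right: the cross terms $[\partial f\wedge,(\partial f\wedge)^{\dag}]$ and their conjugates produce $|\nabla f|^2$ plus Clifford-type contractions by $\partial_i\partial_j f$, and no first-order pieces survive because $\partial f\wedge$ commutes with $\bar\partial$ and $(\partial f\wedge)^{\dag}$ commutes with $\bar\partial^{\dag}$. With the strong-tameness hypothesis feeding a confining lower bound into the form domain, the compact-resolvent argument via Rellich-plus-tail-truncation is standard and airtight. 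Your identification of the ``main obstacle'' is accurate but not a genuine risk: the vanishing of first-order cross terms is a two-line symbol computation.
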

Let $\mathcal{H}\subseteq$Dom($\Delta_f$) be the subspace of $\Delta_f$-harmonic forms, $E_\mu$ be the eigenspace with respect to the eigenvalue $\mu$ and $P_\mu$ be the projection from $L^2(\Lambda^{*}(\mathbb{C}^n))$ to $E_{\mu}$. Then we have the spectrum decomposition formulas:
$$L^2(\Lambda^{*}(\mathbb{C}^n))=\mathcal{H}\oplus\oplus_{i=1}^\infty E_{\mu_i},\quad \Delta_f=\sum_i\mu_i P_{\mu_i}.$$
The Green operator $G_f$ of $\Delta_f$ satisfies:
$$G_f\Delta_f+P_0=\Delta_fG_f+P_0=I.$$
This implies the following Hodge-De-Rham decomposition:
\begin{thm}[\cite{Fan} {\bf Theorem} 2.52] There are orthogonal decompositions
$$L^2\Lambda^k=\mathcal{H}^k\oplus im(\bar{\partial}_f)\oplus im(\bar{\partial}^{\dag}_f).$$
In particular, we have the isomorphism
$$H^*_{((2),\bar{\partial}_f)}\cong \mathcal{H}^*,$$
where $H^*_{((2),\bar{\partial}_f)}$ is the cohomology of the following $L^2$-complex:
$$\cdot\cdot\cdot\rightarrow L^2\Lambda^{k-1}\xrightarrow{\bar{\partial}_f}L^2\Lambda^k\xrightarrow{\bar{\partial}_f}L^2\Lambda^{k+1}\rightarrow\cdot\cdot\cdot$$
\end{thm}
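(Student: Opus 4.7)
The plan is to transplant the classical K\"ahler--Hodge argument to the $L^2$ setting on $\mathbb{C}^n$, using the spectral decomposition of $\Delta_f$ already established. The preceding lemma supplies all the operator identities I will need (in particular $\bar{\partial}_f^2 = (\bar{\partial}_f^\dag)^2 = 0$ and the commutation of $\bar{\partial}_f$ and $\bar{\partial}_f^\dag$ with $\Delta_f$), while Theorem 2.40 gives a purely discrete spectrum with a complete orthonormal basis of eigenforms, hence a Green operator $G_f$ satisfying $G_f\Delta_f + P_0 = I$, where $P_0$ is orthogonal projection onto $\mathcal{H}$.

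First, for any $\omega \in L^2\Lambda^k$ I would write
\[
\omega = P_0\omega + \Delta_f G_f\omega = P_0\omega + (\bar{\partial}_f \bar{\partial}_f^\dag + \bar{\partial}_f^\dag \bar{\partial}_f)G_f\omega.
\]
Since $G_f$ is a spectral function of $\Delta_f$ and $\Delta_f$ commutes with both $\bar{\partial}_f$ and $\bar{\partial}_f^\dag$, one can move $G_f$ past these operators to obtain
\[
\omega = P_0\omega + \bar{\partial}_f(\bar{\partial}_f^\dag G_f\omega) + \bar{\partial}_f^\dag(\bar{\partial}_f G_f\omega),
\]
placing $\omega$ in $\mathcal{H}^k + \operatorname{im}(\bar{\partial}_f) + \operatorname{im}(\bar{\partial}_f^\dag)$. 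Orthogonality of the three summands then follows from formal adjoint relations: a harmonic form is annihilated by both $\bar{\partial}_f$ and $\bar{\partial}_f^\dag$, so $\langle h, \bar{\partial}_f\eta\rangle = \langle \bar{\partial}_f^\dag h, \eta\rangle = 0$ and analogously for $\operatorname{im}(\bar{\partial}_f^\dag)$; while $\langle \bar{\partial}_f\alpha, \bar{\partial}_f^\dag\beta\rangle = \langle \bar{\partial}_f^2\alpha, \beta\rangle = 0$ by $\bar{\partial}_f^2 = 0$.

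For the cohomology isomorphism $H^{*}_{((2),\bar{\partial}_f)} \cong \mathcal{H}^*$, I would send each class $[\omega]$ to its harmonic representative. Given a $\bar{\partial}_f$-closed $\alpha \in L^2\Lambda^k$, decompose $\alpha = h + \bar{\partial}_f\beta + \bar{\partial}_f^\dag\gamma$; applying $\bar{\partial}_f$ and using $\bar{\partial}_f h = 0$ together with $\bar{\partial}_f^2 = 0$ forces $\bar{\partial}_f\bar{\partial}_f^\dag\gamma = 0$, hence $\|\bar{\partial}_f^\dag\gamma\|^2 = \langle \bar{\partial}_f\bar{\partial}_f^\dag\gamma, \gamma\rangle = 0$. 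Thus $\alpha \equiv h$ modulo $\operatorname{im}(\bar{\partial}_f)$, which gives both well-definedness and surjectivity; injectivity reduces to the orthogonality $\mathcal{H}^k \perp \operatorname{im}(\bar{\partial}_f)$ just established.

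The main obstacle, and the reason this is not a formal rerun of compact K\"ahler Hodge theory, is the unboundedness of all the operators together with the noncompactness of $\mathbb{C}^n$. One must check that domains match throughout: that $\bar{\partial}_f^\dag G_f\omega$ and $\bar{\partial}_f G_f\omega$ really lie in $\Dom(\bar{\partial}_f)$ and $\Dom(\bar{\partial}_f^\dag)$ respectively, that the commutations $[G_f, \bar{\partial}_f]=0=[G_f, \bar{\partial}_f^\dag]$ hold as identities of unbounded operators on suitable domains, and that no boundary contribution appears in any adjoint computation. I would handle this by first establishing all identities on the dense algebraic span of eigenforms of $\Delta_f$, where everything reduces to finite sums, and then extending by $L^2$-continuity using completeness of the eigenform expansion. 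The strongly tame hypothesis on $f$ enters precisely here, through Theorem 2.40, ensuring this dense subspace is available and that $G_f$ has the compact, smoothing behaviour needed to justify the manipulations.
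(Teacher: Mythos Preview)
The paper does not prove this theorem; it is quoted verbatim from \cite{Fan}, Theorem~2.52, with no argument supplied. Your outline is the standard $L^2$--Hodge decomposition argument and is correct: it is precisely the route one expects in the cited reference, deriving the decomposition from the Green identity $I = P_0 + \Delta_f G_f$ together with the spectral discreteness of $\Delta_f$ established just above, and your caveat about justifying domain manipulations on the dense span of eigenforms is the right way to handle the unboundedness.
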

The following structure theorem of Hodge groups can be proved by specral sequences:
\begin{thm}[\cite{Fan} {\bf Theorem} 2.66] Assume f is a non-degenerate quasi-homogeneous polynomial on $\mathbb{C}^n$. Then we have
\begin{displaymath}
{\rm dim}\mathcal{H}_f^k=\left\{ \begin{array}{ll}
0 & \textrm{if k$\textless$ n}\\
\mu_f & \textrm{if k=n}
\end{array} \right.
\end{displaymath}
and there exists an explicit isomorphism: 
$$i:\mathcal{H}_f^n\longrightarrow \Omega^n(\mathbb{C}^n)/df\wedge\Omega^{n-1}(\mathbb{C}^n)\cong R_f.$$
Here $R_f$ is the milnor ring of f.
\end{thm}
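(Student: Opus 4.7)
The plan is to compute the cohomology of the $L^2$ complex $(L^2\Lambda^\bullet, \bar{\partial}_f)$ by viewing $\bar{\partial}_f = \bar{\partial} + \partial f\wedge$ as the total differential of a double complex, computing via a spectral sequence, and then transferring the answer to $\mathcal{H}^\bullet_f$ using the Hodge--de Rham decomposition of the previous theorem. Since $\bar{\partial}$ has bidegree $(0,1)$ while $\partial f\wedge$ has bidegree $(1,0)$, and one checks immediately that $\bar{\partial}^2 = (\partial f\wedge)^2 = 0$ together with the anti-commutation $\{\bar{\partial}, \partial f\wedge\} = \bar{\partial}(\partial f)\wedge = 0$ (because $\partial f$ is holomorphic), the operator $\bar{\partial}_f$ is indeed the total differential of the double complex $(\Lambda^{p,q}, \bar{\partial}, \partial f\wedge)$.

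First, I would filter by the holomorphic degree $p$. On the $E_0$-page the differential is $\bar{\partial}$, so the $E_1$-page is the column-wise $\bar{\partial}$-cohomology. Using that $\mathbb{C}^n$ is Stein together with an $L^2$ Dolbeault lemma appropriate to strongly tame $f$ (weighted so that the relevant decay at infinity is controlled by $|\partial f|^2$), one obtains $E_1^{p,q}=0$ for $q>0$ and $E_1^{p,0} = \Omega^p$, the holomorphic $p$-forms in the relevant $L^2$ class.

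Second, on $E_1$ the induced differential is $\partial f\wedge : \Omega^p \to \Omega^{p+1}$, which after writing $\omega = \sum A_I\, dz_I$ is precisely the Koszul differential for the sequence $(\partial_1 f, \ldots, \partial_n f)$. Because $f$ is non-degenerate, $0$ is its only critical point, so the partials cut out the origin scheme-theoretically and thus form a regular sequence locally (and by quasi-homogeneity, globally in the relevant sense). The Koszul complex is therefore acyclic in degrees $<n$ and in degree $n$ yields $\Omega^n/\partial f\wedge \Omega^{n-1} \cong R_f \cdot dz_1\wedge\cdots\wedge dz_n$ via $A\,dz_1\wedge\cdots\wedge dz_n \mapsto [A]$. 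Hence $E_2^{n,0}\cong R_f$ and all other $E_2^{p,q}$ vanish.

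Third, the spectral sequence degenerates at $E_2$ for bidegree reasons (only the $(n,0)$-entry survives), giving $H^k_{L^2,\bar{\partial}_f} = 0$ for $k\neq n$ and $H^n_{L^2,\bar{\partial}_f}\cong R_f$. By the Hodge decomposition of Theorem 2.52 we then read off $\dim \mathcal{H}^k_f = 0$ for $k<n$ and $\dim\mathcal{H}^n_f = \mu_f$, and the explicit isomorphism $i$ is obtained by representing each class by its unique harmonic representative, projecting onto the $(n,0)$-part $A\,dz_1\wedge\cdots\wedge dz_n$, and sending it to $[A]\in R_f$. The main obstacle will be rigorously running the spectral sequence in the $L^2$ category: both the vanishing $E_1^{p,q}=0$ for $q>0$ (a weighted $L^2$ Dolbeault lemma) and the passage from formal Koszul acyclicity to its $L^2$ incarnation need careful justification rather than direct invocation of the sheaf-theoretic or purely algebraic statements. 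Strong tameness of $f$, which underlies Theorems 2.40 and 2.52, is the analytic input ensuring that one can solve $\bar{\partial}$- and Koszul-type equations within $L^2$, and making this precise is the technical heart of the argument.
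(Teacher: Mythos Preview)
Your proposal is correct and matches the paper's approach: the paper does not spell out a proof but explicitly states that the result ``can be proved by spectral sequences,'' and the remark following the theorem describes the isomorphism exactly as you do, namely $\phi = g\,dz_1\wedge\cdots\wedge dz_n + \bar{\partial}_f\gamma$ with $[g]\in R_f$. Your identification of the $L^2$ Dolbeault and Koszul steps as the analytic crux is appropriate, as these are precisely the points where the strongly tame hypothesis is used in \cite{Fan}.
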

\begin{rem}
The above isomorphism can be written as: for each $\phi\in\mathcal{H}^n$, there exists a unique $[g]\in R_f=\mathbb{C}[z_1,...,z_n]/J_f$, such that $\phi=gdz_1\wedge\dots\wedge dz_n+\bar{\partial}_f\gamma$ for some $(n-1)$-form $\gamma$.
\end{rem}
Now we begin to introduce the $tt^*$ geometry structure governing the genus-0 Landau-Ginzburg B-model. Take a nondegernerate quasi-homogeneous polynomial $f(z)\in\mathbb{C}[z_1, \dots, z_n]$ and a deformation 
$$F(z, u)=f(z)+\sum_{i=1}^{s}u_i\phi_i(z)$$ of $f(z)$ with each deformation parameter $u_j, j=1,\dots, s$ marginal or revelant. Denote by $M$ the space of parameters $u_1, \dots, u_{s}$, which is a small neighborhood of the origin in $\mathbb{C}^{s}$. Consider the Hodge bundle $\mathcal{H}^n$ over $M$ with fiber on each $t\in M$ the vector space $\mathcal{H}_{F(z,t)}^n$ of $\Delta_{F(z,t)}$-harmonic forms. Take the real form $\kappa$ to be the complex conjugate on $\mathcal{H}^n$. The $tt^*$ metric on $\mathcal{H}^n$ is induced by the natural metric on the trivial complex Hilbert bundle $\Lambda^*_M(\mathbb{C}^n)=L^2\Lambda^*(\mathbb{C}^n)\times M$ given by $(u,v)(t)=\int_{\mathbb{C}^n}u_t\wedge*\bar{v_t}$, $\forall t\in M$. Let $\Pi: \mathcal{A}^*_M(\mathbb{C}^n)\longrightarrow \Gamma(\mathcal{H}^n)$ be the projection from differential forms to their $\Delta_{f_t}$-harmonic parts. We define the $tt^*$ connections:
$$\hat{D}_i=\Pi\circ\partial_i, \quad\hat{\bar{D}}_{\bar{i}}=\Pi\circ\bar{\partial}_{\bar{i}}, \quad i=1,\cdot\cdot\cdot,s.$$
It can be proved that $\hat{D}+\hat{\bar{D}}$ is the Chern connection of the $tt^*$ metric. The Higgs fields $\hat{C}$ and $\hat{\bar{C}}$ are defined by
$$\hat{C}_i=\Pi\circ\partial_iF,\quad\hat{\bar{C}}_{\bar{i}}=\Pi\circ\overline{\partial_iF}.$$
We define the Gauss-Manin connection to be $D+C$. Then the flatness of $\nabla=D+\bar{D}+C+\bar{C}$ is guaranteed by the so-called $tt^*$ (or Cecotti-Vafa) equations, which will be proved in {\bf Applendix} B. This allows us to define a $tt^*$ stucture. If we impose the condition that dim$(M)=\mu$, or more precisely, when the central charge $\hat{c}=\sum_{i=1}^n(1-2q_i)<1$, $M$ can be the deformation parameter space for a universal unfolding of singularity $f$, and the $tt^*$-geometry captures the genus 0 information of the LG B-model. By the procedures in \cite{Fan}, it can also induce the Frobenius structure constructed by K. Saito and M. Saito using the theory of primitive forms.
\begin{rem}
In the definition of $tt^*$-metric $g$, the integration $(u,v)(t)=\int_{\mathbb{C}^n}u_t\wedge*\bar{v_t}$ makes sense because any $\Delta_f$-harmonic form is $L^2$-integrable. In fact, it is known that they exponentially converge to zero at infinity.
\end{rem}
We have the following theorem from the discussion above.
\begin{thm}[The big $tt^*$ structure on Landau-Ginzburg side]\label{big-structure}
Let $F(z,u)$ be a strong deformation of a non-degenerate quasi-homogeneous polynomial f. Assume that the deformations given by $F$ correspond to the marginal and revelant parts in $R_f$. Then there is a $tt^*$ structure on the harmonic bundle $\Delta_f$-harmonic bundle $\mathcal{H}$ over the deformation space U. We denote it by $\widehat{\E}^{\LG}=(\hat{H}^{\LG}\to M, \hat{\kappa}^{\LG},\hat{\eta}^{\LG}, \hat{D}^{\LG}, \hat{C}^{\LG}, \hat{\bar{C}}^{\LG})$. These datas are given by\\
(1) $M$ is the parameter space of the deformations given by $F$.\\
(2) $\hat{H}^{\LG}$=$\mathcal{H}^n$, on each point $t\in M$, the fibre is given by the $\Delta_{F(z,t)}$-harmonic n forms.\\
(3) The real structure $\hat{\kappa}^{\LG}$ is given by the complex conjuagate action on differential forms.\\
(4) The pairing $\hat{\eta}^{\LG}$ is defined as
$$\hat{\eta}^{\LG}(u,v)(t)=\int_{\mathbb{C}^n}u_t\wedge*v_t$$
Here $u,v$ are two sections of $\hat{H}^{\LG}$. The $tt^*$ metric is $\hat{g}^{\LG}(u,v)=\hat{\eta}^{\LG}(u,\hat{\kappa}^{\LG}v)$\\
(5)The $tt^*$ connections are defined as\\
$$\hat{D}_i^{\LG}=\Pi\circ\partial_i, \quad\hat{\bar{D}}_{\bar{i}}^{\LG}=\Pi\circ\bar{\partial}_{\bar{i}}, \quad i=1,\cdot\cdot\cdot,k.$$
(6)The Higgs feilds are defined as
$$\hat{C}_i^{\LG}=\Pi\circ\partial_iF,\quad\hat{\bar{C}}_{\bar{i}}^{\LG}=\Pi\circ\overline{\partial_iF}, \quad i=1,\cdot\cdot\cdot,k.$$
\end{thm}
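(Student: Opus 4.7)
The plan is to unpack the definition of a $tt^*$ geometry and verify each axiom for the proposed data in three stages. First I would establish the bundle-theoretic setup. Since $F(z,u)$ is a strong deformation of $f$, Theorem 2.40 of \cite{Fan} guarantees that $\Delta_{F_t}$ has purely discrete spectrum for every $t\in M$, and Theorem 2.66 identifies $\mathcal{H}^n_{F_t}\cong R_{F_t}$, so the fiber dimension equals the constant Milnor number $\mu_f$. Standard Kato-type perturbation theory for the smooth family $t\mapsto \Delta_{F_t}$ on the trivial Hilbert bundle $L^2\Lambda^*(\mathbb C^n)\times M$ then shows that $\hat H^{\LG}=\bigsqcup_{t\in M}\mathcal H^n_{F_t}$ is a smooth finite-rank subbundle and that the orthogonal projector $\Pi$ depends smoothly on $t$, which is what makes the definitions of $\hat D^{\LG}$, $\hat{\bar D}^{\LG}$, $\hat C^{\LG}$, $\hat{\bar C}^{\LG}$ meaningful.

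Next I would verify the algebraic axioms. The pivotal observation is that complex conjugation intertwines $\bar\partial_F$ and $\partial_F$: since $\overline{\partial F}=\bar\partial\bar F$, one checks directly that $\hat\kappa^{\LG}\bar\partial_F=\partial_F\hat\kappa^{\LG}$, hence $\hat\kappa^{\LG}\Delta_F=\Delta_F\hat\kappa^{\LG}$, so $\hat\kappa^{\LG}$ preserves $\mathcal H^n_{F_t}$ fiberwise. The metric $\hat g^{\LG}(u,v)=\int u_t\wedge *\overline{v_t}$ is the restriction of the ambient $L^2$ Hermitian inner product, from which the reality condition $\hat g^{\LG}(\hat\kappa u,\hat\kappa v)=\overline{\hat g^{\LG}(u,v)}$ and the compatibility $(\hat D+\hat{\bar D})\hat\kappa=0$ both follow, while nondegeneracy of $\hat\eta^{\LG}$ is the pullback of the residue pairing on $R_f$ under the isomorphism of Theorem 2.66 in \cite{Fan}. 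For the Higgs fields, $\hat{\bar C}_{\bar i}=\hat\kappa\,\hat C_i\,\hat\kappa$ is immediate because $\hat\kappa$ sends multiplication by $\partial_i F$ to multiplication by $\overline{\partial_i F}$, and the adjoint identity $\hat g^{\LG}(\hat C_X u,v)=\hat g^{\LG}(u,\hat{\bar C}_{\bar X}v)$ reduces to pointwise self-adjointness of multiplication by a function together with self-adjointness of $\Pi$.

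I would then identify $\hat D+\hat{\bar D}$ with the Chern connection of $\hat g^{\LG}$. The ambient connection $d=\partial+\bar\partial$ on the trivial Hilbert bundle is compatible with its constant $L^2$ metric, and orthogonal projection of a metric-compatible connection onto a smooth Hermitian subbundle always yields the induced Chern connection, with $\hat{\bar D}=\Pi\circ\bar\partial$ defining the holomorphic structure and $\hat D=\Pi\circ\partial$ its $(1,0)$-part.

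The main obstacle is the flatness of $\nabla=\hat D+\hat{\bar D}+\hat C+\hat{\bar C}$. Although none of the pieces is individually flat, the $tt^*$ (Cecotti--Vafa) equations assert that the curvatures cancel: the curvature of the Chern connection $\hat D+\hat{\bar D}$ equals $-[\hat C,\hat{\bar C}]$, while $\hat D\hat C=\hat{\bar D}\hat{\bar C}=0$ and $[\hat C,\hat C]=[\hat{\bar C},\hat{\bar C}]=0$. My strategy is to work upstairs on the trivial Hilbert bundle, where the total twisted differential $\bar\partial_F+\partial_F$ (together with the parameter derivatives) has an explicit curvature that can be computed from K\"ahler--Hodge identities of the form $[\partial_F,\Lambda]=-i\bar\partial_F^\dag$ and $[\bar\partial_F,\Lambda]=i\partial_F^\dag$ recalled in the excerpt, and then to project via $\Pi$ using the Green operator decomposition $I=\Pi+\Delta_F G_F$ to transfer the identities onto $\mathcal H^n$. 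The detailed manipulations, which are the technical heart of the theorem and which crucially use that the deformation parameters are marginal or relevant (so that $\partial_i F$ lies in the appropriate weight range for all cross terms to close), I would defer to Appendix B as the paper does.
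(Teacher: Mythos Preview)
Your proposal is essentially correct and follows the same high-level structure as the paper: set up the harmonic bundle via the spectral theory of $\Delta_{F_t}$, verify the elementary compatibilities of $\hat\kappa$, $\hat\eta$, $\hat g$, identify $\hat D+\hat{\bar D}$ as the Chern connection by orthogonal projection, and then reduce flatness of $\nabla$ to the $tt^*$ equations, whose proof is deferred to Appendix~B.

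There are two points worth noting. First, your description of the Appendix~B strategy is slightly off. The paper does not compute the curvature of a ``total twisted differential'' via the K\"ahler--Hodge identities $[\partial_F,\Lambda]=-i\bar\partial_F^\dag$, etc. Instead it uses the more elementary commutators $[\partial_i,\bar\partial_f]=\partial(\partial_i f)\wedge$, $[\partial_i,\Delta_f]=[\partial(\partial_i f)\wedge,\bar\partial_f^\dag]$ and their conjugates (Lemma~B.2) to derive explicit formulas on $\mathcal H$ such as $\partial_i=D_i+\partial_f\bar\partial_f^\dag G(\partial_i f)$ and $\partial_i f=C_i+\bar\partial_f\bar\partial_f^\dag G(\partial_i f)$ (Lemma~B.3), and then checks each of the four $tt^*$ identities by hand, systematically discarding $\bar\partial_f$-exact and $\partial_f$-exact error terms before applying $\Pi$. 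Your Green-operator projection idea is exactly right, but the calculus that drives it is these commutators, not the Lefschetz-type identities.

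Second, your claim that the marginal/relevant hypothesis is ``crucially used'' in the $tt^*$ equations so that ``cross terms close'' is a misattribution. In Appendix~B no weight argument appears; the identities are pure operator algebra valid for any strong deformation. The role of the marginal/relevant assumption is upstream: it is what guarantees (via \cite{Fan}, Theorem~2.43) that $F$ is a strong deformation in the first place, so that $\Delta_{F_t}$ has discrete spectrum, the fiber dimension is constant, and $\Pi$ varies smoothly---i.e.\ it is needed for your first stage, not your last.
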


\subsection{A $tt^*$ geometry substructure of Landau-Ginzburg model}
\subsubsection{Motivation}\
In the previous subsection we saw that the deformation of a strongly tame function gives a $tt^*$ geometry structure. From the graph $(1)$ in the introduction, it is natural to ask if we can find a Calabi-Yau mirror?\par
Let $h:\mathbb{C}^{n+2}\longrightarrow \mathbb{C}$ be a homogeneous polynomial which only has an isolated critical point at the origin. To get a Calabi-Yau manifold, we shall assume that deg$h=n+2$.
\par
As we will see later, the dimension of the $tt^*$ structure on the Landau-Ginzburg side is bigger than the  dimension of the $tt^*$ structure on the Calabi-Yau side (which we will construct in 2.3). To find a reasonable LG-CY correspondence, we must restrict the $tt^*$ structure on the Landau-Ginzburg side to a smaller one. The base space of the CY $tt^*$ structure is the space of the complex structure, and it was proven by Fan\cite{Fan} that the dimension of the marginal part in the Milnor ring $R_h$ coincides with the dimension of the space of complex structures on the Calabi-Yau hypersurface $X_h$. So we believe that the LG phase space in LG-CY correspondence should be the marginal deformation part.
\par
As for the vector bundles, on the CY side we prove in 2.3 that there is a natural $tt^*$ structure on the cohomology group of the primitive form. In complex algebraic geometry, Griffith found that the so called residue map induces an isomorphism between the primitive cohomology on $X_h$ and a subring $R_{h}^{(n+2)*}=\oplus_{a=0}^\infty R_{h}^{(n+2)a}$ of the Milnor ring.\par
From the above considerations, we restrict the phase space on LG side to be the marginal part $M_{mar}$ in the deformation space near $h$, and we restrict the holomorphic bundle $H^{LG}$ to be the subbundle of $\mathcal{H}$ corresponds to the harmonic forms given by $R^{(n+2)*}_h$. A harmonic form in $H^{LG}$ has a holomorphic $\bar{\partial_h}$-representation $\sum A_idz_1\wedge...\wedge dz_{n+2}$ with $A_i\in R^{(n+2)*}_h$.\par
\subsubsection{Invariance with respect to $tt*$ connection and Higgs field}

At first we should recall {\bf Definition 1.2}.\par
 Let $\E_i=(H_i\longrightarrow M_i,\kappa_i,\eta_i,D_i,C_i,\bar{C_i}), i=1,2,$ be two $tt^*$ geometric structures. An embedding $\Phi=(\phi, \phi^{\prime})$ of  two holomorphic bundles
\begin{diagram}
H_1&\rTo^{\phi^{\prime}}& H_2\\
\dTo& & \dTo\\
M_1& \rTo^{\phi}& M_2
\end{diagram}
is called an embedding from the $tt^*$ geometric structure $\E_1$ to $\E_2$ if the following hold: $\forall p\in M_1,  X\in T_p M_1, u, v\in (H_1)_p$, 
\begin{enumerate}
\item $\eta_1(u, v)=\eta_2\circ \phi (\phi^{\prime}(u), \phi^{\prime}(v))$.
\item $\kappa_2\circ \phi^{\prime}=\phi^{\prime}\circ \kappa_1$.
\item $\phi^{\prime}((D_1)_X u)=(D_2)_{\phi_*(X)}(\phi^{\prime}(u))$ and $\phi^{\prime}\circ \kappa_1((\bar{D}_1)_{\bar{X}} (\kappa_1(u)))=(\bar{D}_2)_{\overline{\phi_*({X})}}(\kappa_2(\phi^{\prime}(u))$

\item $\phi^{\prime}((C_1)_X u)=(C_2)_{\phi_*(X)}(\phi^{\prime}(u))$
\end{enumerate}
A simple observation is: if $\E_2=(H_2\longrightarrow M_2,\kappa_2,\eta_2,D_2,C_2,\bar{C}_2)$ is a $tt^*$ structure, and $M_1$ is a complex submanifold in $M_2$ with a holomorphic subbundle $H_2$ of $H_1\lvert_{M_1}$ and $(\kappa_1,\eta_1,D_1,C_1,\bar{C}_1)$ such that  $\E_1=(H_1\longrightarrow M_1,\kappa_1,\eta_1,D_1,C_1,\bar{C}_1)$ satisfies (1)-(4) above, then it satisfies the $tt^*$ equation and hence form a $tt^*$ structure. In this case we say that $\E_1$ is a sub-$tt^*$ structure of $\E_2$.\par
Now consider a non-degenerate polynomial $f:\mathbb{C}^{n+2}\longrightarrow\mathbb{C}$ of degree $n+2$. {\bf subsection} 2.1 shows that the strong deformation $F$ gives a $tt^*$ structure $\widehat{\E}^{\LG}=(\hat{H}^{\LG}\to M, \hat{\kappa}^{\LG},\hat{\eta}^{\LG}, \hat{D}^{\LG}, \hat{C}^{\LG}, \hat{\bar{C}}^{\LG})$. We want to show that the data $H^{LG}\longrightarrow M_{mar}$ in 2.2.1 gives a $tt^*$ substucture.\par
To give a $tt^*$ structure on $H^{LG}\longrightarrow M_{mar}$, we must show that the operators of the $tt^*$ structure on $\mathcal{H}$ preserve the sections of $H^{LG}$. With the relation between these operators, we just need to show the sections of  $H^{LG}$ is preserved by the connection $\hat{D}^{LG}$, the higgs field $\hat{C}^{LG}$, and the real structure $\hat{\kappa}^{LG}$.\par
Assume we have proved that $\Gamma(H^{LG})$ is closed under the real structure, then the statements for $\hat{D}^{LG}$ and $\hat{C}^{LG}$ can be easily proved by using the formulas from Appendix B.\par
\begin{thm}
Assume $\Gamma(H^{LG})$ is closed under the real structure. Restrict the $\Delta_F$-harmonic bundle $\mathcal{H}$ to the marginal deformation part (which corresponds to $R_F^{d*}$ in the Milnor ring), then for a marginal tagent field $i$,  the space of sections of $H^{LG}$ is closed under $\hat{D}^{LG}_i$ and $\hat{C}^{LG}_i$.
\end{thm}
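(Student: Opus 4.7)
The plan is to reduce both invariance statements to a degree-counting argument on the Milnor ring, using the canonical identification $i:\mathcal{H}^n_{F_u}\cong R_{F_u}$ recalled in the previous subsection. Under this identification the fibre of $H^{LG}$ at $u\in M_{mar}$ is exactly the graded piece $R_{F_u}^{(n+2)*}=\bigoplus_{a\ge 0}R_{F_u}^{(n+2)a}$. Both operators we must control have the shape $\Pi\circ T_i$: for $\hat{C}^{LG}_i$ the map $T_i$ is wedge-multiplication by $\partial_i F=\phi_i$, and for $\hat{D}^{LG}_i$ it is $\partial_{u_i}$ acting on a smooth lift. It therefore suffices to verify (i) $T_i$ preserves the classes whose Milnor representatives lie in $R_{F_u}^{(n+2)*}$, and (ii) the harmonic projection $\Pi$ commutes with the charge grading.

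For the Higgs field, marginality of $u_i$ means $\phi_i$ is homogeneous in $z$ of degree $n+2$. The Jacobian ideal $I_{F_u}$ is homogeneous, so $R_{F_u}$ inherits a $\mathbb{Z}_{\ge 0}$-grading, and under $i$ the operator $\hat{C}^{LG}_i$ acts by $[A]\mapsto [\phi_i\cdot A]$, which sends $R_{F_u}^{(n+2)a}$ into $R_{F_u}^{(n+2)(a+1)}$. For the connection, one selects a local frame of $H^{LG}$ whose representatives $A_\alpha(z,u)$ are fixed homogeneous monomials belonging to $R^{(n+2)*}$ (the grading survives along $M_{mar}$ because every $\phi_i$ has the same degree as $f$). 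The commutator identity $[\partial_{u_i},\bar\partial_{F_u}]=\phi_i\wedge$ together with the Green-operator expressions recorded in Appendix B then displays $\hat{D}^{LG}_i s_\alpha$ as a combination of $\partial_{u_i}$-differentiations in a weight-zero direction, wedgings with $\phi_i$, and applications of $\Pi$; the first two manifestly preserve $R^{(n+2)*}$ by the Higgs-field argument above.

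The essential, and I expect only non-trivial, step is (ii): that the $\Delta_{F_u}$-harmonic representative of a form whose Milnor class lies in $R_{F_u}^{(n+2)*}$ itself has Milnor class in $R_{F_u}^{(n+2)*}$. This is where I expect the main obstacle, and it rests on the quasi-homogeneity of $F_u$, which survives precisely because only marginal parameters are switched on. The Euler vector field $E=\sum_j q_j z_j\partial_{z_j}$ with $q_j=1/(n+2)$ generates a $\mathbb{C}^*$-action under which $\bar\partial_{F_u}$ and $\bar\partial_{F_u}^{\dag}$ are homogeneous of fixed weights, so $\Delta_{F_u}$ is weight-diagonal on $L^2$-forms. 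Each eigenspace, and in particular the harmonic one, therefore decomposes according to the weight grading, so $\Pi$ commutes with it. Combining this with (i) completes the proof of both closure properties.
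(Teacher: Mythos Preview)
Your argument for the Higgs field $\hat{C}^{\LG}_i$ is correct and matches the paper: since $\phi_i=\partial_iF$ has degree $n+2$, multiplication sends $R_{F_u}^{(n+2)a}$ into $R_{F_u}^{(n+2)(a+1)}$.

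Your argument for $\hat{D}^{\LG}_i$, however, has a genuine gap, and it is telling that you never invoke the hypothesis ``$\Gamma(H^{\LG})$ is closed under the real structure''. The equivariance claim in your step~(ii) fails: under the $U(1)$-action generated by the (real part of the) Euler field, the two summands of $\bar\partial_{F_u}=\bar\partial+\partial F\wedge$ carry different weights ($0$ and $n+2$ respectively), so $\Delta_{F_u}=[\bar\partial_{F_u},\bar\partial_{F_u}^{\dag}]$ acquires cross-terms of weight $\pm(n+2)$ (the Hessian pieces $\sum\overline{\partial_j\partial_k F}\,d\bar z_k\,\iota_{\partial_{z_j}}$ and its conjugate). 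Hence $\Delta_{F_u}$ is \emph{not} weight-diagonal and $\Pi$ need not commute with the charge grading on $L^2$-forms. For the full $\C^*$-action the metric is not preserved, so the adjoint and $\Pi$ are not even defined equivariantly. Your sketch for the connection also appeals to ``Green-operator expressions'' displaying $\hat D_i s_\alpha$ as combinations of $\partial_{u_i}$, wedging with $\phi_i$, and $\Pi$; but the actual formula $D_i=\partial_i-\partial_f\bar\partial_f^{\dag}G(\partial_i f)$ involves $\bar\partial_f^{\dag}$ and the Green operator $G$, neither of which respects the Milnor-ring degree in any obvious way.

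The paper's route is quite different and is exactly where the real-structure hypothesis enters. One first observes that the sections $s_j=A_j\,dz_0\wedge\cdots\wedge dz_{n+1}+\bar\partial_{F}R_j$ with $\deg A_j\in(n+2)\Z$ are annihilated by $\hat{\bar D}^{\LG}$ (since $[\bar\partial_{\bar i},\bar\partial_F]=0$ and $A_j$ is independent of $\bar u$), so $\Gamma(H^{\LG})$ is trivially closed under $\hat{\bar D}^{\LG}$. One then uses the $tt^*$ identity $\hat D^{\LG}=\hat\kappa^{\LG}\circ\hat{\bar D}^{\LG}\circ\hat\kappa^{\LG}$ together with the assumed $\hat\kappa^{\LG}$-invariance of $\Gamma(H^{\LG})$ to conclude closure under $\hat D^{\LG}$. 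This is why the statement is formulated conditionally on the real structure; establishing that hypothesis is precisely the content of the subsequent monodromy analysis.
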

\begin{proof}
The statements for $D$ and $C$ can be proved easily by using the formulas we give in Appendix B. \par
A section of $H^{LG}$ $A:M\longrightarrow H^{LG}$ has the form $A(u)=G(u)\Pi(\sum A_jdz_1\wedge...\wedge dz_{n+2})$. Here $u=(u_1,u_2,...)$ is a local coordinate of the deformation space of $f(z)=F(z,0)$, $G(u)$ is a smooth function, $\Pi$ is the projection of a $\bar{\partial_f}$ closed form to its $\Delta_F$-harmonic part, and $\{A_i\}$ are polynomials in $R_f^{(n+2)*}$. \par
From Appendix B we have the following formula
$$\hat{D}_i^{\LG}=\Pi\circ\partial_i, \quad\hat{\bar{D}}_{\bar{i}}^{\LG}=\Pi\circ\bar{\partial}_{\bar{i}}$$
$$\hat{C}_i^{\LG}=\Pi\circ\partial_iF,\quad\hat{\bar{C}}_{\bar{i}}^{\LG}=\Pi\circ\overline{\partial_iF}$$
Here $\partial_i$ means take derivative with respect to the direction $u_i$ and $\partial_if$ means multiply the $\bar{\partial_f}$-closed form with the function $\partial_if$ given by the deformation direction $i$. Similar for $\hat{\bar{D}}_{\bar{i}}^{\LG}$ and $\hat{\bar{C}}_{\bar{i}}^{\LG}$.\par
By using the above formulas, we see that the sections of $H^{LG}$ with the form $A_jdz_1\wedge...\wedge_{n+2}+\bar{\partial}_FR_j$, $degA_j=(n+2)*$ gives a holomorphic basis of $H^{LG}$ with respect to the holomorphic structure given by $\hat{\bar{D}}^{LG}$, i.e.
$$\hat{\bar{D}}^{LG}(A_jdz_1\wedge...\wedge_{n+2}+\bar{\partial}_FR_j)=0$$
As we have $\hat{D}^{LG}=\hat{\kappa}^{LG} \circ \hat{\bar{D}}^{LG} \circ \hat{\kappa}^{LG}$ , we see that $\Gamma(H^{LG})$ is closed under $\hat{D}^{LG}$.\par
As for the Higgs field $C$, notice that in our case $u_i$ corresponds to a marginal deformation direction, so $\partial_if$ can be represented by a marginal polynomial, i.e. a polynomal of degree $d$. So the statement holds for $\hat{C}^{LG}$.
\end{proof}
\subsubsection{Invariance with respect to the real structure}
To see the subbundle $H^{\LG}$ is closed under the real structure, we need to review some results.\par
It was noticed early that the $\bar{\partial}_f$-cohomology can be identified with some relative cohomology groups. In \cite{Fan}, Fan estabulished such an identification.\par
Let $f$ be a non-degenerate quasi-homogeneous polynomial on $\mathbb{C}^{n+2}$.\par
Let $\alpha > 0$, define two sets $f^{\geq \alpha}$ and $f^{\leq -\alpha}$ 
\begin{displaymath}
f^{\geq \alpha}=\{z\in \mathbb{C}^{n+2}\lvert Ref(z)\geq \alpha\}, f^{\leq -\alpha}=\{z\in \mathbb{C}^{n+2}\lvert Ref(z)\leq -\alpha\}
\end{displaymath}
As the polynomial $f$ is non-degenerate, for arbitrary $\alpha,\beta > 0$, $f^{\geq \alpha}$ and $f^{\geq \beta}$ are homotopy to each other in $\mathbb{C}^{n+2}$, same for $f^{\leq -\alpha}$ and $f^{\leq -\beta}$. So we can define these two equivalent classes by $f^{\geq +\infty}$ and $f^{\leq -\infty}$.\par
Now consider the relative homology group $H_*(\mathbb{C}^{n+2},f^{\geq +\infty},\mathbb{Z})$. By using the exact sequence 
\begin{displaymath}
...\longrightarrow H_k(f^{\geq +\infty},\mathbb{Z})\longrightarrow H_k(\mathbb{C}^{n+2},\mathbb{Z})\longrightarrow H_k(\mathbb{C}^{n+2},f^{\geq +\infty},\mathbb{Z})\longrightarrow  H_{k-1}(f^{\geq +\infty},\mathbb{Z})\longrightarrow...
\end{displaymath}
we know that the relative homology group  $H_k(\mathbb{C}^{n+2},f^{\geq +\infty},\mathbb{Z})$ is isomorphic to $H_{k-1}(f^{\geq +\infty},\mathbb{Z})$ by the boundary map. The isomorphism between the cohomology groups $H^k(\mathbb{C}^{n+2},f^{\leq -\infty},\mathbb{Z})$ and $H^{k-1}(f^{\geq +\infty},\mathbb{Z})$ can also be obtained by considering the exact sequence. We also have $H^k(\mathbb{C}^{n+2},f^{\geq +\infty},\mathbb{C})=H^k(\mathbb{C}^{n+2},f^{\geq +\infty},\mathbb{Z})\bigotimes\mathbb{C}$. The definition of $H^k(\mathbb{C}^{n+2},f^{\leq -\infty},\mathbb{C})$ is similar. \par
We have the following theorem from the results given by subsection {\bf 4.3} in \cite{Fan}.\par
\begin{lm}[\cite{Fan} Prop 4.52 ]\label{iso-harmonic-lef}
Given a quasi-homogeneous polynomial $f$ with only isolated singularities at the origin. Let $\mathcal{H}$ be the space of $\Delta_f$-harmonic (n+2)-forms. Then there is an isomorphism $\phi:\mathcal{H}\longrightarrow H^{n+2}(\mathbb{C}^{n+2},f^{\leq -\infty})$, defined by
\begin{displaymath}
\phi(\alpha)=e^{f+\bar{f}}\alpha.
\end{displaymath}
To be more explict, consider $\phi(\alpha)$ as an element in the relative cohomology group and let $\Gamma$ be a Lefshetz thimble, then the action of $\phi(\alpha)$ on $\Gamma$ is given by
$$\int_{\Gamma}e^{f+\bar{f}}\alpha$$
\end{lm}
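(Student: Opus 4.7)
The plan is to establish the isomorphism via a conjugation argument that intertwines the twisted Cauchy--Riemann operators with the ordinary de Rham differential, reducing the statement to a Hodge-theoretic identification on one side and a de Rham-type theorem for relative cohomology with decay conditions on the other.

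First I would verify that $\phi(\alpha)=e^{f+\bar f}\alpha$ is $d$-closed whenever $\alpha$ is $\Delta_f$-harmonic. From the identities $[\bar\partial_f,\bar\partial_f^{\dag}]=[\partial_f,\partial_f^{\dag}]=\Delta_f$, the condition $\Delta_f\alpha=0$ forces both $\bar\partial_f\alpha=0$ and $\partial_f\alpha=0$ (by taking inner products with $\alpha$); summing these relations gives $d\alpha+d(f+\bar f)\wedge\alpha=0$, from which a direct Leibniz computation yields $d(e^{f+\bar f}\alpha)=0$. The exponential decay of $\Delta_f$-harmonic forms at infinity (noted following Theorem~2.52) combined with the weight $e^{2\operatorname{Re} f}$, which decays on $f^{\le -\infty}$, guarantees that $\phi(\alpha)$ is integrable enough to represent a class in $H^{n+2}(\C^{n+2},f^{\le -\infty})$ and to pair absolutely with Lefschetz thimbles.

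The core structural observation is the conjugation identity
\[
e^{-(f+\bar f)}\circ d\circ e^{f+\bar f}=\partial_f+\bar\partial_f,
\]
which uses only that $df=\partial f$ and $d\bar f=\bar\partial\bar f$ since $f$ is holomorphic. Because $(\partial_f+\bar\partial_f)^2=0$ by the twisted commutation identities, multiplication by $e^{f+\bar f}$ is a cochain isomorphism from a complex of suitably integrable forms with differential $\partial_f+\bar\partial_f$ onto the complex of smooth forms with exponential decay on $f^{\le -\infty}$ equipped with $d$. A bracket computation (using $[\partial_f,\bar\partial_f^{\dag}]=[\bar\partial_f,\partial_f^{\dag}]=0$) shows that the Laplacian of $\partial_f+\bar\partial_f$ equals $2\Delta_f$, so its harmonic forms coincide with $\mathcal H$, and Hodge theory then identifies $H^{n+2}(\partial_f+\bar\partial_f)\cong\mathcal H$. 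On the target side, a de Rham theorem for the pair $(\C^{n+2},f^{\le -\infty})$ would identify the $d$-cohomology of forms with decay at $f^{\le -\infty}$ with the topological relative cohomology. Composing these identifications produces $\phi$ and shows it is an isomorphism.

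As a sanity check on dimensions, $\dim\mathcal H^{n+2}=\mu_f$ by Theorem~2.66, while Milnor's theorem gives that $f^{\le-\infty}$ is homotopy equivalent to a bouquet of $\mu_f$ spheres of dimension $n+1$, so the long exact sequence of the pair $(\C^{n+2},f^{\le-\infty})$ together with contractibility of $\C^{n+2}$ yields $\dim H^{n+2}(\C^{n+2},f^{\le -\infty})=\mu_f$ as well. The main obstacle I anticipate is establishing the de Rham isomorphism on the target side in the exact form needed, namely that smooth forms with exponential decay on $f^{\le -\infty}$ (modulo exact such forms) compute the topological $H^{n+2}(\C^{n+2},f^{\le -\infty})$. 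This calls for either a sheaf-theoretic argument adapted to the pair with weighted decay, or an explicit cutoff/partition-of-unity construction showing that the inclusion of the decaying complex into a larger complex (compactly supported forms, say) is a quasi-isomorphism in the relevant degree. A secondary technical point is carefully matching the integrability constraints across the conjugation so that no cohomological information is lost when passing between $L^2$ data on the source and exponential-decay data on the target.
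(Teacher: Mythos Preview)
The paper does not supply its own proof of this lemma: it is imported wholesale from \cite{Fan} (Proposition~4.52) and stated here without argument. So there is no in-paper proof to compare against, and your proposal should be read as a reconstruction of what presumably happens in the cited reference.

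Your outline is structurally sound and captures the right mechanism. The conjugation identity $e^{-(f+\bar f)}\,d\,e^{f+\bar f}=\partial_f+\bar\partial_f$ is correct, the identification of the Laplacian of $\partial_f+\bar\partial_f$ with $2\Delta_f$ via the graded commutation relations is correct, and the dimension check via Milnor's bouquet description is a good consistency test. You have also correctly flagged the genuine analytic burden: one must specify precisely which complex of forms (what growth or decay condition at infinity) is being used on each side so that (i) the conjugation by $e^{f+\bar f}$ is a cochain isomorphism between them, (ii) Hodge theory for $\Delta_f$ applies on the source, and (iii) a de Rham--type theorem identifies the target with topological relative cohomology. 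Note in particular that $e^{2\operatorname{Re}f}$ grows on $f^{\ge+\infty}$, so $\phi(\alpha)$ is not globally $L^2$; the point is only that it decays on $f^{\le-\infty}$, which is what relative cohomology and the pairing with Lefschetz thimbles require. Making this precise (e.g.\ via a Witten--Helffer--Sj\"ostrand style argument or an Agmon-estimate driven comparison of complexes) is exactly the content of the cited result in \cite{Fan}, and is not something you can expect to bypass with soft arguments.
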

\begin{rem}
It is easy to see that the real structures on these two spaces are both given by the complex conjugate, and $\phi$ is an isomorphism respect to these two real structures.
\end{rem}
To study the above isomorphism, we should discuss the oscillatory integrals. We need to recall some results from \cite{Fan}\par
Let $\alpha$ be a $\Delta_f$-harmonic form, we have the following isomorphism from {\bf Theorem 2.8}
$$i:\mathcal{H}_f\longrightarrow \Omega^{n+2}(\mathbb{C}^{n+2})/df\wedge \Omega^{n+1}(\mathbb{C}^{n+2})\cong R_f$$
given by 
$$\alpha=i(\alpha)+\bar{\partial}_fR_{\alpha}$$
Here we identify $i(\alpha)$ with $p_{\alpha}dz_1\wedge...\wedge dz_{n+2}$, $p_{\alpha}$ is a polynomial.\par
Here is an important theorem from \cite{Fan}
\begin{lm}[\cite{Fan} Lemma 4.88]\label{osc-int}
Let $\{\alpha_a,a=1,...,\mu\}$ be a frame of the Hodge bundle $\mathcal{H}$. Let $R$ be a smooth n+1 form. Let $\{\Gamma_a^{-},a=1,...,\mu\}$ be a basis of 
$H_{n+2}(\mathbb{C}^{n+2},f^{\leq -\infty},\mathbb{C})$ consists $\mu$ Lefthetz thimbles. If at any point $(\tau,t)\in U$, there is 
$$\Sigma_{a=1}^{\mu}\int_{\mathbb{C}^{n+2}}\lvert R\lvert\cdot\lvert\alpha_a\lvert<\infty$$
then
$$\int_{\Gamma_a^{-}}e^{f+\bar{f}}\bar{\partial}_fR=\int_{\Gamma_a^{-}}e^{f+\bar{f}}\partial_fR=0$$
In particular this is true if $R$ has only polynomial growth.
\end{lm}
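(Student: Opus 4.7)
The plan is to derive both vanishings from Stokes' theorem applied to a single exact form, and then to separate the resulting sum into its two twisted-Dolbeault pieces. First I would verify, by the Leibniz rule and the holomorphicity of $f$ (which forces $\bar\partial f=\partial\bar f=0$, so that $de^{f+\bar f}=e^{f+\bar f}(\partial f+\bar\partial\bar f)$), the central identity
$$d(e^{f+\bar f}R)=e^{f+\bar f}(\partial_f R+\bar\partial_f R),$$
which packages the two twisted operators of Section~2.1 into a single exact $(n+2)$-form.

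Next I would apply Stokes' theorem and control the boundary flux at infinity. Exhausting $\Gamma_a^-$ by compact pieces $\Gamma_a^-\cap\{\mathrm{Re}(f)\geq -N\}$, the boundary slice sits inside $\{\mathrm{Re}(f)=-N\}$, where the weight $e^{f+\bar f}=e^{2\mathrm{Re}(f)}=e^{-2N}$ decays super-exponentially. The integrability hypothesis $\sum_a\int|R|\cdot|\alpha_a|<\infty$, combined with the exponential decay of the $\Delta_f$-harmonic frame $\{\alpha_a\}$ noted in the remark preceding Theorem~\ref{big-structure}, forces $R$ to grow at most polynomially; hence the boundary flux tends to zero as $N\to\infty$, giving
$$\int_{\Gamma_a^-}e^{f+\bar f}(\partial_f R+\bar\partial_f R)=0.$$

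Separating this sum into the two individual identities is the delicate step. Heuristically, one exploits that $\Gamma_a^-$ is a Lefschetz thimble, along which $\mathrm{Im}(f)$ is constant, forcing $(\partial f-\bar\partial\bar f)|_{\Gamma_a^-}=2i\,d(\mathrm{Im}(f))|_{\Gamma_a^-}=0$. Combined with the conjugation identity $\overline{\bar\partial_f R}=\partial_f\bar R$, this reduces the $\partial_f$-vanishing to the $\bar\partial_f$-vanishing via the substitution $R\mapsto\bar R$, and a bidegree refinement of the Stokes argument on the pure-type pieces $R^{p,q}$ completes the decoupling. The main obstacle is exactly this step: Stokes alone yields only the combined identity, and the totally real, Lagrangian structure of the Lefschetz thimble is the crucial additional ingredient required to isolate the two twisted Dolbeault contributions.
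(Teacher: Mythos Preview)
The paper does not give its own proof of this lemma; it is cited from \cite{Fan}. So there is no in-paper argument to compare against, but your outline has a genuine gap worth naming.

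Your identity $d(e^{f+\bar f}R)=e^{f+\bar f}(\partial_fR+\bar\partial_fR)$ is correct, and Stokes with your boundary control yields
\[
\int_{\Gamma_a^-}e^{f+\bar f}\bigl(\partial_fR+\bar\partial_fR\bigr)=0.
\]
But this is only the \emph{sum}, and neither of your separation devices isolates the two pieces. The conjugation identity $\overline{\bar\partial_fR}=\partial_f\bar R$, combined with the reality of $\Gamma_a^-$ and of $e^{f+\bar f}$, shows only that the $\bar\partial_f$-statement for $R$ is equivalent to the $\partial_f$-statement for $\bar R$; it does not prove either one. The bidegree idea fails too: for a pure-type piece $R^{p,q}$, both $\partial_fR^{p,q}=\partial R^{p,q}+\bar\partial\bar f\wedge R^{p,q}$ and $\bar\partial_fR^{p,q}=\bar\partial R^{p,q}+\partial f\wedge R^{p,q}$ contribute to the \emph{same} two bidegrees $(p+1,q)$ and $(p,q+1)$, so they cannot be disentangled by Hodge type. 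And while $\mathrm{Im}(f)$ is constant along $\Gamma_a^-$, so that $\partial f|_{\Gamma_a^-}=\bar\partial\bar f|_{\Gamma_a^-}$, the operator $\bar\partial$ does not restrict to an intrinsic differential on the real submanifold $\Gamma_a^-$; hence there is no way to run Stokes on $e^{f+\bar f}\bar\partial_fR$ by itself.

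You also misread the hypothesis. The condition $\sum_a\int_{\mathbb{C}^{n+2}}|R|\,|\alpha_a|<\infty$ does not force $R$ to have polynomial growth: since the $\alpha_a$ decay exponentially, sub-exponential growth of $R$ of any order is compatible with the bound. More to the point, the explicit appearance of the harmonic frame $\{\alpha_a\}$ in the hypothesis is a strong signal that the intended mechanism is not Stokes on the thimble at all, but rather the $L^2$-pairing with the $\alpha_a$: one uses $\bar\partial_f^{\dagger}\alpha_a=0=\partial_f^{\dagger}\alpha_a$ together with the nondegenerate period matrix $\bigl(\int_{\Gamma_b^-}e^{f+\bar f}\alpha_a\bigr)_{a,b}$ to reduce the thimble integrals to pairings $\langle\bar\partial_fR,\alpha_a\rangle_{L^2}$, and the integrability hypothesis is exactly what justifies the integration by parts on $\mathbb{C}^{n+2}$. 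Your approach never touches this structure, and that is why the decoupling step cannot be completed.
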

Apply this theorem to the oscillatory integrals of $\Delta_f$-harmonic forms, we have
\begin{lm}[\cite{Fan} Prop4.89]
When $f:\mathbb{C}^{n+2}\longrightarrow\mathbb{C}$ is a non-degenerate quasi-homogeneous polynomial, we have 
$$\int_{\Gamma_a^{-}}e^{f+\bar{f}}\alpha=\int_{\Gamma_a^{-}}e^{f+\bar{f}}[i(\alpha)+\bar{\partial}_fR_{\alpha}]=\int_{\Gamma_a^{-}}e^{f+\bar{f}}i(\alpha)$$
Here $\alpha$ is a $\Delta_f$-harmonic form and $i(\alpha)$ is a holomorphic $n$-form with polynomial coefficient.
\end{lm}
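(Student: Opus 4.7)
The first equality is automatic from the decomposition $\alpha = i(\alpha) + \bar{\partial}_f R_\alpha$ furnished by the isomorphism of the preceding Hodge-theoretic structure theorem; one just substitutes this decomposition into the oscillatory integral. The substance of the lemma is therefore the second equality, which amounts to the vanishing
\[
\int_{\Gamma_a^-} e^{f+\bar{f}}\,\bar{\partial}_f R_\alpha \;=\; 0.
\]
This is precisely the conclusion of Lemma \ref{osc-int} applied to the $(n+1)$-form $R_\alpha$, so the entire proof reduces to verifying that a representative $R_\alpha$ can be chosen satisfying the integrability hypothesis of that lemma. In particular, it is enough to exhibit an $R_\alpha$ with at most polynomial growth at infinity, since the $\Delta_f$-harmonic basis $\{\alpha_a\}$ decays exponentially.

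My plan is to produce such an $R_\alpha$ via the Green operator $G_f$ of the twisted Laplacian. Because $i(\alpha)$ is a top form, $\bar{\partial}_f i(\alpha) = 0$, and because $\alpha$ is the $\Delta_f$-harmonic projection of $i(\alpha)$, the identity $\Delta_f G_f = I - \Pi$ gives $\Delta_f G_f(i(\alpha)) = i(\alpha) - \alpha$. Using that $G_f$ commutes with $\bar{\partial}_f$ (since it commutes with $\Delta_f$) and the Kähler--Hodge identities $\bar{\partial}_f \bar{\partial}_f^\dagger + \bar{\partial}_f^\dagger \bar{\partial}_f = \Delta_f$, one obtains
\[
R_\alpha := -\bar{\partial}_f^\dagger G_f\bigl(i(\alpha)\bigr), \qquad \bar{\partial}_f R_\alpha = \alpha - i(\alpha).
\]
Any other primitive differs from this one by a $\bar{\partial}_f$-closed $(n+1)$-form, whose oscillatory integral is the same (again by Lemma \ref{osc-int} applied in the other direction), so nothing is lost by fixing this canonical choice.

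The main obstacle is the polynomial growth estimate for this explicit $R_\alpha$; this is the genuine analytic input. Strong tameness of $f$ makes $\Delta_f$ essentially a Schrödinger operator with confining potential $|\nabla f|^2$ together with bounded curvature-type terms, and through the spectral theory in \cite{Fan} the Green operator $G_f$ is bounded on suitable weighted $L^2$-spaces. Elliptic regularity for $\Delta_f$ then propagates the polynomial growth of the source $i(\alpha)$ to $G_f(i(\alpha))$, losing at most finitely many derivatives; applying $\bar{\partial}_f^\dagger$, a first-order differential operator with polynomially bounded coefficients, preserves polynomial growth, so $R_\alpha$ has polynomial growth. The hypothesis $\sum_a \int_{\mathbb{C}^{n+2}} |R_\alpha|\cdot |\alpha_a| < \infty$ of Lemma \ref{osc-int} is thereby satisfied by the exponential decay of the harmonic basis, and the desired vanishing follows.
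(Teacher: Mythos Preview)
Your overall strategy coincides with the paper's: both reduce the second equality to Lemma~\ref{osc-int} by arguing that a primitive $R_\alpha$ with polynomial growth exists. The paper does not attempt any analytic construction; it simply asserts (deferring to \cite{Fan}) that for a non-degenerate quasi-homogeneous $f$ one can choose generators of $\Omega^{n+2}/df\wedge\Omega^{n+1}\cong R_f$ with polynomial growth and that the corresponding $R_\alpha$ can likewise be taken to have polynomial growth. Implicitly this comes from an \emph{explicit algebraic} contracting homotopy of the type written out in Appendix~A (the forms $\gamma_j$ built from $\lambda_i=\overline{\partial_i f}/|df|^2$), whose polynomial growth is visible because $|df|^{-2}$ and its derivatives are polynomially bounded for quasi-homogeneous $f$.

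Your Green-operator route has a genuine gap. The operators $\Pi$ and $G_f$ are defined as bounded operators on $L^2(\Lambda^*(\mathbb{C}^{n+2}))$, but $i(\alpha)$ is a polynomial times $dz_0\wedge\cdots\wedge dz_{n+1}$ and is \emph{not} in $L^2$. Hence the expression $G_f(i(\alpha))$ is not a priori meaningful, the identity $\Delta_f G_f = I-\Pi$ cannot be applied to it, and the equation $\alpha=\Pi(i(\alpha))$ is not the $L^2$ projection it appears to be. Your gesture toward ``weighted $L^2$-spaces'' is the right instinct for a repair, but it is a substantial piece of analysis in its own right: you would need to extend $G_f$ to a polynomially weighted space and prove it preserves polynomial growth, none of which follows from the spectral results quoted. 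The explicit construction sidesteps this entirely, which is why the paper (and \cite{Fan}) takes that route.
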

\begin{proof}
When $f$ is a non-degenerate quasi-homogeneous polynomial, we can take generators of $\Omega^{n+2}(\mathbb{C}^{n+2})/df\wedge \Omega^{n+1}(\mathbb{C}^{n+2})\cong R_f$ with polynomial growth and the corresponding $R_{\alpha}$ can be also chosen to have only polynomial growth. Then apply {\bf Lemma} \ref{osc-int}.
\end{proof}
To calculate the above oscillatory integrals, it is useful to choose a special basis of $H^{n+2}(\mathbb{C}^{n+2},f^{\leq -\infty},\mathbb{Z})$. From now, for simplicity,{\bf we assume that $f$ is a non-degenerate homogeneous polynomial}, the results below can also been obtained in quasi-homogeneous cases by using the same methods.
\begin{lm}\label{Lef-thim} 
Let $f:\mathbb{C}^{n+2}\longrightarrow \mathbb{C}$ be a non-degenerate homogeneous polynomial of degree $m$. Then we find a basis of $H_{n+2}(\mathbb{C}^{n+2},f^{\leq -\infty},\mathbb{Z})$ by taking $\mu$ Lefshetz thimbles $\{\Gamma^{-}_a,a=1,...,\mu\}$ such that the images of these Lefshetz thimbles under $f$ are the negative real line.
\end{lm}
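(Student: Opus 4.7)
The plan is to construct $\mu$ explicit relative cycles in $H_{n+2}(\mathbb{C}^{n+2}, f^{\leq -\infty}, \mathbb{Z})$ whose $f$-images lie on the negative real axis, and then to show that they form a basis. The key idea is to exploit the fact that for a non-degenerate homogeneous polynomial $f$ of degree $m$ with isolated critical point only at $0$, the map $f : \mathbb{C}^{n+2} \setminus f^{-1}(0) \to \mathbb{C}^*$ is a global Milnor fibration, and moreover it carries a natural $\mathbb{C}^*$-action $z \mapsto \lambda z$ satisfying $f(\lambda z) = \lambda^m f(z)$, which generates parallel transport along real rays through the origin.

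First I would pin down the rank of the target homology group via the long exact sequence of the pair $(\mathbb{C}^{n+2}, f^{\leq -\alpha})$ for any fixed $\alpha > 0$. Since $\mathbb{C}^{n+2}$ is contractible, this gives $H_{n+2}(\mathbb{C}^{n+2}, f^{\leq -\alpha}) \cong H_{n+1}(f^{\leq -\alpha})$. Because $0$ is the only critical value of $f$, the restriction $f : f^{\leq -\alpha} \to \{w \in \mathbb{C} : \re w \leq -\alpha\}$ is a locally trivial fibration over a contractible half-plane, so $f^{\leq -\alpha}$ deformation retracts onto the Milnor fiber $F_{-\alpha} = f^{-1}(-\alpha)$. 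By Milnor's theorem this has the homotopy type of a wedge of $\mu$ copies of $S^{n+1}$, so $H_{n+1}(f^{\leq -\alpha}, \mathbb{Z}) \cong \mathbb{Z}^\mu$.

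Next I would construct the $\mu$ thimbles explicitly. Pick a basis of vanishing cycles $\{\delta_a\}_{a=1}^\mu$ of $H_{n+1}(F_{-1}, \mathbb{Z})$, for instance by passing to a small Morsification $f + \varepsilon g$ with generic lower-degree $g$ and then specializing $\varepsilon \to 0$. For each $a$, define
$$
\Gamma_a^- := \{\lambda \cdot z : \lambda \in [1, +\infty),\; z \in \delta_a\} \subset \mathbb{C}^{n+2}.
$$
Using $f(\lambda z) = \lambda^m f(z)$ and $f|_{\delta_a} \equiv -1$, we get $f(\Gamma_a^-) = (-\infty, -1]$, which is the required subset of the negative real line. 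Since $\delta_a$ is a closed $(n+1)$-cycle and the scaling coordinate $\lambda$ carries the chain off to $f^{\leq -N}$ for every $N$, the $(n+2)$-chain $\Gamma_a^-$ has boundary $\delta_a \subset F_{-1} \subset f^{\leq -\infty}$, hence defines a relative cycle in $(\mathbb{C}^{n+2}, f^{\leq -\infty})$.

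Finally, to prove that $\{\Gamma_a^-\}_{a=1}^\mu$ is a basis, I would check that the connecting homomorphism $\partial_* : H_{n+2}(\mathbb{C}^{n+2}, f^{\leq -\infty}) \to H_{n+1}(f^{\leq -\infty})$ in the long exact sequence sends $[\Gamma_a^-]$ to $[\delta_a]$ under the identification $H_{n+1}(f^{\leq -\infty}) \cong H_{n+1}(F_{-1})$ provided by the deformation retraction. Since $\partial_*$ is an isomorphism and $\{\delta_a\}$ is a basis on the right-hand side, $\{\Gamma_a^-\}$ is a basis on the left. The main technical point I expect to have to nail down is the identification $f^{\leq -\alpha} \simeq F_{-\alpha}$ together with the fact that the $\mathbb{C}^*$-scaling realizes the fibration's parallel transport along the negative real ray; both should follow from standard arguments for globally tame polynomial maps and the homogeneity of $f$, but they are the places where one must be careful about behavior at infinity and at the singular fiber.
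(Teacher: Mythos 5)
Your overall strategy is the same as the paper's (scale a basis of cycles on $V_{-1}=f^{-1}(-1)$ by the real one-parameter group coming from homogeneity, and use the long exact sequence of the pair together with contractibility of $\mathbb{C}^{n+2}$ and the retraction of $f^{\leq-\alpha}$ onto the Milnor fiber), and your rank computation is fine and in fact more explicit than what the paper writes. But your explicit chains are wrong: you take $\Gamma_a^-=\{\lambda z:\lambda\in[1,+\infty),\,z\in\delta_a\}$, i.e.\ only the \emph{outward} part of the cone, omitting the cone over $\delta_a$ with apex at the critical point $0$. Since $f(\lambda z)=-\lambda^m\leq-1$ on this set, the whole chain lies inside $f^{\leq-1}$; so if one realizes $f^{\leq-\infty}$ by $f^{\leq-1}$ the chain is contained in the subspace and its relative class is zero, while for a deeper cut $f^{\leq-\alpha}$, $\alpha>1$, the truncated chain has the extra boundary component $\delta_a\subset V_{-1}$ outside the subspace and is not a relative cycle at all. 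Correspondingly, your key verification $\partial_*[\Gamma_a^-]=[\delta_a]$ fails: the truncated chain has boundary $\delta_a$ at level $-1$ minus its rescaled copy at a very negative level, and under the retraction of $f^{\leq-\infty}$ onto $V_{-1}$ (which is exactly the inverse scaling flow) these two pieces represent the same class, so $\partial_*[\Gamma_a^-]=0$. Hence your chains cannot form a basis, and they also do not satisfy the stated property that $f(\Gamma_a^-)$ is the whole negative real line (you only get $(-\infty,-1]$), which matters later when the oscillatory integral is written as $\int_0^{+\infty}e^{-2t}P(t)\,dt$.

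The fix is exactly the paper's construction: take the scaling parameter down to $0$, i.e.\ $\Gamma_a^-=\{t^{1/m}z:\ z\in S_a,\ t\in[0,+\infty)\}$, so the thimble is the (infinite) cone over the cycle $S_a$ through the origin, with $f(\Gamma_a^-)=(-\infty,0]$. Truncating at level $-\alpha$ gives a genuine relative cycle for $(\mathbb{C}^{n+2},f^{\leq-\alpha})$ whose only boundary is the rescaled copy of $S_a$ in $V_{-\alpha}\subset f^{\leq-\alpha}$; then your long-exact-sequence argument applies verbatim, since $\partial_*$ now sends $[\Gamma_a^-]$ to the class of $S_a$ under the identification $H_{n+1}(f^{\leq-\infty})\cong H_{n+1}(V_{-1})$, and a basis $\{S_a\}$ of $H_{n+1}(V_{-1},\mathbb{Z})\cong\mathbb{Z}^\mu$ yields a basis of $H_{n+2}(\mathbb{C}^{n+2},f^{\leq-\infty},\mathbb{Z})$.
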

\begin{proof}
Let $V_s$ be the level set $f^{-1}(s)$. Then for $t>0$, there is canonical map 
$$h_t:V_{-1}\longrightarrow V_{-t}$$
given by
$$h_t(x)=t^{1/m}x$$
Let $\{S_a,a=1,...,\mu\}$ be $\mu$ vanishing cycles on $V_{-1}$, then we can take the Lefshetz thimbles with the form 
$$\Gamma^{-}_a=\{h_t(S_a)\lvert t\in[0,+\infty)\},a=1,...,\mu$$
Easy to see that $f(\Gamma^{-}_a)$ is the negative real line.
\end{proof}
Apply the results above, we can calculate the oscillatory integrals of $\Delta_f$-harmonic forms from oscillatory integrals of holomorphic forms. We have
\begin{thm}\label{harmonic-holomorphic-integral}
Assume that $f:\mathbb{C}^{n+2}\longrightarrow\mathbb{C}$ is a non-degenerate homogeneous polynomial of degree $m$.
Let $\{\alpha_a=i(\alpha_a)+\bar{\partial}_fR_a,a=1,...,\mu\}$ be a basis of $\mathcal{H}$ with polynomial growth holomorhphic parts. Let $\{\Gamma^{-}_b,b=1,...,\mu\}$ be the Lefthetz thimbles in {\bf Lemma} \ref{Lef-thim}. Then we have 
$$\int_{\Gamma^{-}_b}e^{f+\bar{f}}\alpha_a=\int_{\Gamma^{-}_b}e^{2f}i(\alpha_a) \qquad a,b=1,...,\mu$$
and if $\{i(\alpha_a),a=1,...\mu\}$ corresponds to a monomial basis of Milnor ring $R_f$, the oscillatory integrals of holomorphic forms can be calculated as
$$\int_{\Gamma^{-}_b}e^{2f}i(\alpha_a)=\frac{\Gamma(\frac{n+2+deg[i(\alpha_a)]}{m})}{2^{\frac{n+2+deg[i(\alpha_a)]}{m}}}\int_{\Gamma_b\bigcap V_{-1}}\frac{i(\alpha_a)}{df}$$
Here $\frac{i(\alpha_a)}{df}$ is the Gelfand-Leray form (see {\bf Appendix} C), which is a holomorphic $(n+1)$-form on $V_{-1}$.
\end{thm}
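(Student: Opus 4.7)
The plan is to split the theorem into its two displayed equalities and dispatch them separately. For the first identity $\int_{\Gamma_b^-} e^{f+\bar f}\alpha_a = \int_{\Gamma_b^-} e^{2f} i(\alpha_a)$, I would invoke the preceding lemma (Fan's Proposition~4.89 reproduced just above), which already gives $\int_{\Gamma_b^-} e^{f+\bar f}\alpha_a = \int_{\Gamma_b^-} e^{f+\bar f} i(\alpha_a)$ by discarding the exact piece $\bar\partial_f R_a$ through the polynomial-growth hypothesis of Lemma~\ref{osc-int}. Then I would use the defining property of the thimbles from Lemma~\ref{Lef-thim}: by construction $f(\Gamma_b^-)\subseteq(-\infty,0]$, so $f$ is real-valued on $\Gamma_b^-$, hence $\bar f=f$ there and $e^{f+\bar f}=e^{2f}$ pointwise on the thimble, which finishes the first equality.

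For the second equality, I would exploit the homogeneity of $f$ together with the explicit parametrization of the thimble. Writing $i(\alpha_a) = p_a(z)\, dz_1 \wedge \cdots \wedge dz_{n+2}$ with $p_a$ a monomial of degree $d_a := \deg[i(\alpha_a)]$, the form $i(\alpha_a)$ carries scaling weight $d_a + n + 2$, while $df$ has weight $m$. Lemma~\ref{Lef-thim} realizes every point of $\Gamma_b^-$ uniquely as $h_t(x) := t^{1/m} x$ for $(t,x) \in [0,\infty) \times S_b$ with $S_b := \Gamma_b^- \cap V_{-1}$, and along this parametrization $f(h_t(x)) = -t$. Factoring $i(\alpha_a) = (i(\alpha_a)/df) \wedge df$ away from the origin and applying the Gelfand--Leray/Fubini formula with outer variable $s = f = -t$, the integral becomes
\[
\int_{\Gamma_b^-} e^{2f}\, i(\alpha_a) \;=\; \int_0^\infty e^{-2t} \left( \int_{h_t(S_b)} \frac{i(\alpha_a)}{df} \right) dt.
\]
Since $i(\alpha_a)/df$ is homogeneous of scaling weight $d_a + n + 2 - m$, the pullback identity $h_t^*(i(\alpha_a)/df) = t^{(d_a + n + 2 - m)/m}\,(i(\alpha_a)/df)|_{V_{-1}}$ extracts the entire $t$-dependence from the inner integral, leaving $t^{(d_a+n+2-m)/m}\int_{S_b} i(\alpha_a)/df$.

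The remaining step is the elementary substitution $u = 2t$, which evaluates
\[
\int_0^\infty e^{-2t}\, t^{(d_a + n + 2 - m)/m}\, dt \;=\; \frac{\Gamma((n+2+d_a)/m)}{2^{(n+2+d_a)/m}},
\]
and recombining with $\int_{S_b} i(\alpha_a)/df = \int_{\Gamma_b^- \cap V_{-1}} i(\alpha_a)/df$ yields the claimed closed form. The main technical obstacle I foresee is the bookkeeping of the Gelfand--Leray step: I must justify the pointwise factorization $i(\alpha_a) = (i(\alpha_a)/df)\wedge df$ on the thimble (which requires $df\neq 0$, valid away from the origin), track signs and orientations through the change of variables $(t,x)\mapsto h_t(x)$, and verify absolute convergence at both endpoints. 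Convergence at $t=\infty$ is automatic from the $e^{-2t}$ decay, and convergence at $t=0$ requires $(d_a+n+2-m)/m > -1$, i.e.\ $d_a+n+2>0$, which always holds since $d_a\geq 0$.
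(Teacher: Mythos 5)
Your proposal is correct and follows essentially the same route as the paper: the first equality via the preceding lemma (dropping $\bar{\partial}_f R_a$ by polynomial growth) plus the fact that $f$ is real-valued on the thimbles of Lemma \ref{Lef-thim}, and the second via the Gelfand--Leray fibration over $t$ combined with the homogeneity scaling $h_t(x)=t^{1/m}x$, which is exactly the paper's step $P(t)=t^{\frac{n+2-m+\deg[i(\alpha_a)]}{m}}P(1)$ followed by the Gamma-function integral. Your added remarks on orientation bookkeeping and convergence at $t=0,\infty$ are consistent with, and slightly more careful than, the paper's argument.
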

\begin{proof}
$f$ takes real values on the Lefshetz thimbles in {\bf Lemma} \ref{Lef-thim}, so $f+\bar{f}$ equals to $2f$ in the integrals, we have
$$\int_{\Gamma^{-}_b}e^{f+\bar{f}}\alpha_a=\int_{\Gamma^{-}_b}e^{2f}i(\alpha_a) \qquad a,b=1,...,\mu$$
The second integral above can be written as
$$\int_{\Gamma^{-}_b}e^{2f}i(\alpha_a)=\int^{+\infty}_0e^{-2t}P(t)dt$$
Here $P(t)$ is 
$$P(t)=\int_{\Gamma_b\bigcap V_{-t}}\frac{i(\alpha_a)}{df}=\frac{1}{2\pi i}\int_{T}\frac{i(\alpha_a)}{f+t}$$
$T$ is the boundary of a small tubular neighborhood of the vanishing cycle $\Gamma_b\bigcap V_{-t}$.
By the homogeneity, we have 
$$P(t)=t^{\frac{n+2-m+deg[i(\alpha_a)]}{m}}P(1)$$
hence
$$\int_{\Gamma^{-}_b}e^{2f}i(\alpha_a)=\frac{\Gamma(\frac{n+2+deg[i(\alpha_a)]}{m})}{2^{\frac{n+2+deg[i(\alpha_a)]}{m}}}\int_{\Gamma_b\bigcap V_{-1}}\frac{i(\alpha_a)}{df}$$
\end{proof}
By above discussion, we have
\begin{thm}\label{close-real-structure}
Assume $f:\mathbb{C}^{n+2}\longrightarrow\mathbb{C}$ is a non-degenerate homogeneous polynomial of degree $n+2$. Then following properties are equivalent\\
(i)The sub-bundle $H^{LG}$ of $\hat{H}^{LG}$ which corresponds to the $n*$ parts in Milnor ring $R_f$ is closed under the real structure $\hat{\kappa}^{LG}$.\\
(ii)The space spanned by the cohomology classes $\{\frac{i(\alpha_a)}{df}\lvert degi(\alpha_a)=d*\}$ forms a real subspace of $H^{n+1}(V_{-1},\mathbb{C})$.
\end{thm}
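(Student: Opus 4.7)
The plan is to transport the reality question across a chain of isomorphisms, arriving at the Gelfand-Leray realization on $V_{-1}$ where (ii) is phrased. By Lemma \ref{iso-harmonic-lef} and its remark, $\phi(\alpha)=e^{f+\bar f}\alpha$ gives an isomorphism $\mathcal{H}\xrightarrow{\sim}H^{n+2}(\mathbb{C}^{n+2},f^{\leq -\infty},\mathbb{C})$ that intertwines the two complex conjugation operators. Hence (i) is equivalent to the assertion that $\phi(H^{LG})$ is closed under complex conjugation inside the relative cohomology, and I would work on that reformulation throughout.

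Next I would pair this relative cohomology against the Lefschetz-thimble basis $\{\Gamma^-_b\}$ from Lemma \ref{Lef-thim}. The period map $\alpha\mapsto(P_b(\alpha))_b$ with $P_b(\alpha)=\int_{\Gamma^-_b}e^{f+\bar f}\alpha$ is a linear isomorphism, and since each $\Gamma^-_b$ is a real cycle on which $f+\bar f=2f$ takes real values, conjugation of a relative cohomology class corresponds to entrywise conjugation of its period vector. Applying Theorem \ref{harmonic-holomorphic-integral} to a harmonic form $\alpha^{(a)}$ whose Milnor representative has degree $(n+2)a$, one gets
$$P_b(\alpha^{(a)})=\frac{a!}{2^{a+1}}\int_{S_b}\frac{i(\alpha^{(a)})}{df},\qquad S_b=\Gamma^-_b\cap V_{-1},$$
where $\{S_b\}$ is the corresponding basis of the vanishing homology $H_{n+1}(V_{-1},\mathbb{Z})$. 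So under the period map, the image of $\phi(H^{LG})$ differs from the image of the subspace $W\subset H^{n+1}(V_{-1},\mathbb{C})$ appearing in (ii) only by the real, positive diagonal rescaling $\alpha^{(a)}\mapsto (a!/2^{a+1})\cdot\alpha^{(a)}$.

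The equivalence (i)$\Leftrightarrow$(ii) would then follow from the observation that multiplication by a real diagonal matrix is an $\mathbb{R}$-linear automorphism of the period space and therefore commutes with complex conjugation, so it preserves the class of conjugation-closed subspaces. The main subtlety I expect to handle carefully is the interaction of this grading-dependent rescaling with the fact that a general section of $H^{LG}$ is a finite sum $\alpha=\sum_a\alpha^{(a)}$ across different degrees $a$, each carrying a different positive real weight $c_a=a!/2^{a+1}$; one must check that the span and reality structures are compatible in the presence of these distinct weights, but this is exactly the content of the $\mathbb{R}$-linearity remark above. Everything else---non-degeneracy of the relative Poincaré pairing with the thimbles, the reality of $\Gamma^-_b$, and the basis correspondence $\Gamma^-_b\leftrightarrow S_b$ in Lemma \ref{Lef-thim}---is either standard or already supplied in the lemmas quoted above, so once the weight-rescaling issue is cleanly resolved, the two directions (i)$\Rightarrow$(ii) and (ii)$\Rightarrow$(i) should drop out simultaneously.
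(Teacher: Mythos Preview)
Your proposal is correct and follows essentially the same route as the paper: pass to relative cohomology via $\phi$, pair against the integral thimble basis of Lemma~\ref{Lef-thim}, and invoke Theorem~\ref{harmonic-holomorphic-integral} to see that the period vectors differ from the Gelfand--Leray periods on $V_{-1}$ only by the real nonzero constants $c_{\deg i(\alpha)}$. The paper dispatches your ``weight-rescaling subtlety'' in one line by observing that the two $\mathbb{C}$-spans are literally equal (a nonzero scalar multiple of a spanning vector does not change the span), so one is conjugation-closed iff the other is; your $\mathbb{R}$-linearity remark is a slightly more elaborate way of saying the same thing.
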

\begin{proof}
Take the Lefshetz thimbles $\{\Gamma_b^{-},b=1,...,\mu\}$ in {\bf Lemma}\ref{Lef-thim} as a basis of $H_{n+2}(\mathbb{C}^{n+2},f^{\leq -\infty},\mathbb{Z})$.\\
By {\bf Lemma}\ref{iso-harmonic-lef}, $(i)$ is equivalent to the subspace 
$$span_{\mathbb{C}}\{(\int_{\Gamma_1^{-}}e^{f+\bar{f}}\alpha,...,\int_{\Gamma_{\mu}^{-}}e^{f+\bar{f}}\alpha), degi(\alpha)=d*\}$$
is a real subspace of $\mathbb{C}^{\mu}$.\\
By {\bf Theorem} \ref{harmonic-holomorphic-integral}
$$(\int_{\Gamma_1^{-}}e^{f+\bar{f}}\alpha,...,\int_{\Gamma_{\mu}^{-}}e^{f+\bar{f}}\alpha)=c_{degi(\alpha)}(\int_{\Gamma_1^{-}\bigcap V_{-1}}\frac{i(\alpha)}{df},...,\int_{\Gamma_{\mu}^{-}\bigcap V_{-1}}\frac{i(\alpha)}{df})$$
Here $c_{degi(\alpha)}$ is a non-zero real number.
Hence we have 
$$span_{\mathbb{C}}\{(\int_{\Gamma_1^{-}}e^{f+\bar{f}}\alpha,...,\int_{\Gamma_{\mu}^{-}}e^{f+\bar{f}}\alpha), degi(\alpha)=d*\}$$
$$=span_{\mathbb{C}}\{(\int_{\Gamma_1^{-}\bigcap V_{-1}}\frac{i(\alpha)}{df},...,\int_{\Gamma_{\mu}^{-}\bigcap V_{-1}}\frac{i(\alpha)}{df}),degi(\alpha)=d*\}$$
As the vanishing cycles $\{\Gamma_b^{-}\bigcap V_{-1},b=1,...,\mu\}$ forms a basis of $H^{n+1}(V_{-1},\mathbb{Z})$, $(ii)$ is equivalent to 
$$span_{\mathbb{C}}\{(\int_{\Gamma_1^{-}\bigcap V_{-1}}\frac{i(\alpha)}{df},...,\int_{\Gamma_{\mu}^{-}\bigcap V_{-1}}\frac{i(\alpha)}{df}),degi(\alpha)=d*\}$$
forms a real subspace of $H^{n+1}(V_{-1},\mathbb{C})$. So we have proved that $(i)$ and $(ii)$ are equivalent.
\end{proof}
To prove $(ii)$ in the above theorem is true, we need to recall some results about the monodromy of Milnor fibration from \cite{B}. The details will be given in {\bf Appendix} C.\par
From {\bf Theorem} \ref{close-real-structure} $(ii)$, we have
\begin{thm}
Assume $f:\mathbb{C}^{n+2}\longrightarrow\mathbb{C}$ is a non-degenerate homogeneous polynomial of degree $n+2$. Then the cohomology classes $\{\frac{i(\alpha_a)}{df}\lvert degi(\alpha_a)=(n+2)*\}$ are exactly the invariant parts of $H^{n+1}(V_{-1},\mathbb{C})$ under the monodromy of Gauss-Manin connection of Milnor fibration. Moreover, these classes span a real subspace of $H^{n+1}(V_{-1},\mathbb{C})$.
\end{thm}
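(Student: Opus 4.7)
The plan is to compute the monodromy action on Gelfand-Leray forms directly using the homogeneity of $f$, and then match dimensions against a known result on the eigenvalues of the monodromy of a homogeneous polynomial.

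First I would unpack the monodromy: since $f$ is homogeneous of degree $d=n+2$, a positively oriented loop around $0$ in the base $\mathbb{C}^*$ of the Milnor fibration lifts to the self-diffeomorphism $\psi: V_{-1} \to V_{-1}$, $\psi(z) = \lambda z$, with $\lambda = e^{2\pi i/(n+2)}$. The key computation is then: for a form $i(\alpha_a)=P(z)\,dz_1\wedge\cdots\wedge dz_{n+2}$ with $P$ a monomial of polynomial degree $k$,
\begin{equation*}
\psi^*\bigl(i(\alpha_a)\bigr)=\lambda^{k+n+2}\,i(\alpha_a),\qquad \psi^*(df)=\lambda^{n+2}\,df=df,
\end{equation*}
since $\lambda^{n+2}=1$. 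Hence the Gelfand-Leray form satisfies
\begin{equation*}
\psi^*\!\left(\tfrac{i(\alpha_a)}{df}\right)=\lambda^{k+n+2}\,\tfrac{i(\alpha_a)}{df}=\lambda^{k}\,\tfrac{i(\alpha_a)}{df}.
\end{equation*}
When $k=\deg i(\alpha_a)=(n+2)a$, this shows $[i(\alpha_a)/df]$ lies in the monodromy-invariant subspace of $H^{n+1}(V_{-1},\mathbb{C})$. This gives one inclusion.

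For the reverse inclusion I would appeal to the classical spectral description of the monodromy of a quasi-homogeneous singularity (which will be reviewed in Appendix C, following the reference \cite{B}): the monodromy is semisimple, and a monomial basis $\{z^I\}$ of the Milnor ring $R_f$ yields eigenvectors whose eigenvalues are $e^{2\pi i(|I|+n+2)/(n+2)}=e^{2\pi i |I|/(n+2)}$ under the identification with $H^{n+1}(V_{-1},\mathbb{C})$ via Gelfand-Leray forms. The $+1$-eigenspace therefore has dimension equal to the number of such monomials with $|I|\equiv 0 \pmod{n+2}$, which is exactly $\dim R_f^{(n+2)*}$. Combined with the inclusion from the previous paragraph, a dimension count forces equality.

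For the final claim that the invariant classes span a real subspace, I would use the fact that the Milnor fibration is defined over $\mathbb{Z}$: the monodromy preserves the integral lattice $H^{n+1}(V_{-1},\mathbb{Z})$, hence also $H^{n+1}(V_{-1},\mathbb{Q})$ and $H^{n+1}(V_{-1},\mathbb{R})$. Consequently, the fixed subspace $H^{n+1}(V_{-1},\mathbb{C})^{\psi}$ has a rational (in particular real) structure, and by the identification in the preceding paragraph this is precisely the span of $\{i(\alpha_a)/df:\deg i(\alpha_a)=(n+2)*\}$.

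The main obstacle will be the quoted eigenvalue computation for the monodromy and the careful bookkeeping of the degree of the form versus the degree of the polynomial coefficient. The direct monodromy computation on Gelfand-Leray forms is straightforward from homogeneity, but the matching dimension count requires the semisimplicity and explicit spectrum of the monodromy, which must be imported from the theory of quasi-homogeneous singularities. Once those inputs are in place, the equivalence (i)$\Leftrightarrow$(ii) of Theorem~\ref{close-real-structure} immediately yields that $\Gamma(H^{\LG})$ is closed under $\hat{\kappa}^{\LG}$, completing the desired substructure statement.
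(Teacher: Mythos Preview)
Your proposal is correct and reaches the same conclusion as the paper, but by a somewhat different route. The paper defers the monodromy computation to Appendix~C, where it is carried out algebraically via the Brieskorn lattice $H_f''$: one shows that $z\nabla_z$ acts on the class $[z^{\alpha}\,dz_1\wedge\cdots\wedge dz_{n+2}]$ by the scalar $\langle\alpha+1,w\rangle-1$, whence the monodromy eigenvalue is $\exp\bigl(2\pi i\langle\alpha,w\rangle\bigr)$. Your argument is more geometric and more elementary: you exhibit the monodromy explicitly as pullback by $\psi(z)=e^{2\pi i/(n+2)}z$ and read off the eigenvalue $\lambda^{|\alpha|}$ directly on Gelfand--Leray forms. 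This bypasses the Brieskorn machinery entirely, though it rests on the same underlying input (that the Gelfand--Leray forms of a monomial basis of $R_f$ form a basis of $H^{n+1}(V_{-1},\mathbb{C})$). In fact your direct computation already diagonalizes the monodromy in this basis, so the separate ``appeal to the classical spectral description'' for the reverse inclusion is redundant: once you know these forms span, both inclusions follow at once from your eigenvalue formula. For the real-subspace claim, your argument via preservation of the integral lattice and the paper's argument via realness of the Gauss--Manin connection are the same observation phrased in two ways.
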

\begin{proof}
See {\bf Appendix} C for the calculation of monodromy of Gauss-Manin connection.\\
As the Gauss-Manin conncetion is a real connection, the monodromy can be written as a real matrix, and the invariant parts corresponds to the eigensubspace of eigenvalue 1, which is obviously a real subspace.
\end{proof}

As a summary of this subsection, we have the following theorem:

\begin{thm}[The small $tt^*$ geometry structure on Landau-Ginzburg side]
There is a sub-$tt^*$ structure $\E^{\LG}=({H}^{\LG}\to M_{mar}, \kappa^{\LG},\eta^{\LG}, D^{\LG}, C^{\LG}, \bar{C}^{\LG})$ of the big $tt^*$ structure $\widehat{\E}^{\LG}=(\hat{H}^{\LG}\to M, \hat{\kappa}^{\LG},\hat{\eta}^{\LG}, \hat{D}^{\LG}, \hat{C}^{\LG}, \hat{\bar{C}}^{\LG})$, consisting of the following data:\\
(1) The state space $M_{mar}$ is the marginal deformation part in $M$\\
(2) The vector bundle  ${H}^{\LG}\to M_{mar}$ is the subbundle of $\hat{H}^{\LG}\lvert M_{mar}$, which corresponds to the harmonic forms with holomorphic representations in $R^{d*}$.\\
(3) The real structure $\kappa^{\LG}$ is the complex conjugate action on differential forms.\\
(4) The pairing $\eta^{\LG}$, $tt^*$ connections $D^{\LG},\bar{D}^{\LG}$ and Higgs fields $C^{\LG}, \bar{C}^{\LG})$ are the restrictions of the corresponding operators in the big $tt^*$ structure.
\end{thm}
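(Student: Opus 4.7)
The plan is straightforward: to assemble the pieces already developed in this subsection into the asserted sub-$tt^*$ structure. By the observation recorded at the start of subsection 2.2.2, in order to exhibit $\E^{\LG}$ as a sub-$tt^*$ structure of $\widehat{\E}^{\LG}$ it suffices to check that $H^{LG}$ is a holomorphic subbundle of $\hat{H}^{LG}|_{M_{mar}}$ and that $\Gamma(H^{LG})$ is preserved by the ambient real structure $\hat{\kappa}^{LG}$, connection $\hat{D}^{LG}$, and Higgs field $\hat{C}^{LG}$. The remaining data $\eta^{LG}$, $\bar{D}^{LG}$, $\bar{C}^{LG}$ are then obtained by restriction, and the $tt^*$ axioms hold automatically, being inherited from the corresponding identities on the ambient bundle $\widehat{\E}^{\LG}$ of Theorem \ref{big-structure}.

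The central issue is closure under the real structure. By Theorem \ref{close-real-structure}, this closure is equivalent to reality of the subspace of $H^{n+1}(V_{-1},\C)$ spanned by the Gelfand-Leray forms $i(\alpha_a)/df$ whose degrees are multiples of $n+2$. The strategy is to identify this subspace with the monodromy-invariant part of $H^{n+1}(V_{-1},\C)$ under the Gauss-Manin connection of the Milnor fibration, via the weight-and-charge analysis deferred to Appendix C. Once this identification is in place, reality is immediate: the Gauss-Manin connection is defined over $\Q$, so its monodromy acts as a real matrix on $H^{n+1}(V_{-1},\R)\otimes\C$, and the $+1$-eigenspace of a real matrix is automatically defined over $\R$.

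Closure under $\hat{D}^{LG}$ and $\hat{C}^{LG}$ is then the content of the theorem stated under the hypothesis that the real structure already preserves $\Gamma(H^{LG})$. I would recall its argument briefly in this final wrap-up: for a marginal direction $i$ the derivative $\partial_i F$ is homogeneous of degree $n+2$, so $\partial_i F\cdot A_j\in R^{(n+2)*}_F$ whenever $A_j\in R^{(n+2)*}_F$, which yields closure under $\hat{C}^{LG}$; closure under $\hat{\bar{D}}^{LG}$ is visible from the local holomorphic frame of $H^{LG}$ given by sections $A_j\,dz_1\wedge\cdots\wedge dz_{n+2}+\bar{\partial}_F R_j$ with $A_j\in R^{(n+2)*}_F$; and closure under $\hat{D}^{LG}$ then follows from the identity $\hat{D}^{LG}=\hat{\kappa}^{LG}\circ\hat{\bar{D}}^{LG}\circ\hat{\kappa}^{LG}$ combined with the reality established in the previous paragraph.

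The main obstacle is precisely the reality step, which reduces to the monodromy computation of Appendix C. This is where the homogeneity degree $n+2$ (rather than an arbitrary degree), together with the nondegeneracy of $f$, genuinely enters; everything else in the proof is a formal consequence of Theorem \ref{big-structure}, the Gelfand-Leray identification of $\bar{\partial}_f$-cohomology with $R_f$, and the preceding results of this subsection.
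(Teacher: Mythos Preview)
Your proposal is correct and follows essentially the same approach as the paper: the theorem is stated there as a summary of the preceding subsection, and your outline reproduces exactly that logic—reduce to closure under $\hat{\kappa}^{LG}$, $\hat{D}^{LG}$, $\hat{C}^{LG}$ via the sub-$tt^*$ observation, establish reality through Theorem \ref{close-real-structure} combined with the monodromy-invariance argument of Appendix C, and then invoke the conditional closure theorem for $\hat{D}^{LG}$ and $\hat{C}^{LG}$.
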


\subsection{$tt^*$ geometry of Calab-Yau model}

Consider the universal deformation of complex structures $\pi:\mathcal{X}\longrightarrow M$ for a Calabi-Yau  manifold $X$. When studying the Weil-Peterson metric on the deformation space $M=H^1(X, T_{X})$ for dim$X=3$, Bershadsky-Cecotti-Ooguri-Vafa \cite{BCOV} proved the special geometry ($tt^*$ geometry) relations, which naturally arise on the variation of Hodge structures. Later, Kontsevich-Barannikov \cite{BK} generalized BCOV's theory to higher-dimensional Calabi-Yau manifolds by constructing a Frobenius manifold structure on the `extended moduli space of deformations' $H(PV_X, \bar{\partial})$. Here we give a higher-dimensional generalization of BCOV's theory by considering only the primitive part of Hodge bundle.
\par
Now consider a non-degenerate homogeneous polynomial
$$f:\mathbb{C}^{n+2}\longrightarrow\mathbb{C}$$
with degree n+2 (the Calabi-Yau condition).\par
Let $X_f$ be the Calabi-Yau $n$-fold determined by $f$ in $\mathbb{CP}^{n+1}$. By Tian-Todorov theorem, the universal deformation space of $X_f$ is (locally) $M=H^1(X, T_X)$. By {\bf Theorem}\ref{intr-thm-modu-numb}, the deformation of complex struture can be given by the marginal deformation of $f$ (except for n=2).\par
Consider the primitive Hodge bundle $\mathcal{H}_{prim}^n$ over $M$, on $t\in M$ the fibre is the space of primitive harmonic n-forms. We have a Hodge filtration by holomorphic subbundles:
$$F^n\mathcal{H}_{prim}^n\subseteq\dots \subseteq F^0\mathcal{H}_{prim}^n=\mathcal{H}_{prim}^n,\quad F^k\mathcal{H}_{prim}^n=\oplus_{p=0}^{n-k}\mathcal{H}_{prim}^{n-p, p}.$$
Take the real form $\kappa$ to be the complex conjugation. By Hodge-Riemann bilinear relation, the pairing
$$g_p(u, v)=i^{2p-n}\int_Xu\wedge\bar{v}$$
is a metric on $\mathcal{H}^{n-p, p}_{prim}$, and we define the $tt^*$ metric on $\mathcal{H}_{prim}^n$ to be $g=\sum_pg_p$. 
\begin{df}\label{CY-conn-higgs}
The $tt^*$ connections $D+\bar{D}$ and the Higgs field $C$, $\bar{C}$ are defined by
$$D(\alpha)=\Pi_{\mathcal{H}_{prim}^{p,n-p}}[\nabla^{GM}(\alpha)], \quad\forall p\in\mathbb{N}, \quad \alpha\in\mathcal{H}_{prim}^{p,n-p},$$
$$\bar{D}(\alpha)=\Pi_{\mathcal{H}_{prim}^{p,n-p}}[\bar{\nabla}^{GM}(\alpha)],\quad\forall p\in\mathbb{N}, \quad \alpha\in\mathcal{H}_{prim}^{p,n-p},$$
$$C(\alpha)=\Pi_{\mathcal{H}_{prim}^{p-1,n-p+1}}[\nabla^{GM}(\alpha)], \quad\forall p\in\mathbb{N}, \quad \alpha\in\mathcal{H}_{prim}^{p,n-p},$$
$$\bar{C}(\alpha)=\Pi_{\mathcal{H}_{prim}^{p+1,n-p-1}}[\bar{\nabla}^{GM}(\alpha)], \quad\forall p\in\mathbb{N}, \quad \alpha\in\mathcal{H}_{prim}^{p,n-p},$$
where the various $\Pi$ are orthogonal projections to certain subspaces.
\end{df} 
The variation of Hodge structures gives a flat Gauss-Manin connection on $\mathcal{H}^n_{prim}$. We now show that it can be decomposed to the $tt^*$ connection and Higgs field.
\begin{thm}We have:
$$\nabla^{GM}=D+C,\quad\bar{\nabla}^{GM}=\bar{D}+\bar{C}.$$
\end{thm}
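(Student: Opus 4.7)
The plan is to exploit Griffiths transversality for both the Hodge filtration $F^\bullet$ and its complex conjugate $\bar F^\bullet$, and to realize each Hodge subbundle $\mathcal{H}^{p,n-p}_{prim}$ as the intersection $F^p\cap\bar F^{n-p}$. Recall $F^p=\bigoplus_{r\geq p}\mathcal{H}^{r,n-r}_{prim}$ and $\bar F^q=\bigoplus_{r\leq n-q}\mathcal{H}^{r,n-r}_{prim}$, so that
$$F^{p-1}\cap\bar F^{n-p}=\mathcal{H}^{p-1,n-p+1}_{prim}\oplus\mathcal{H}^{p,n-p}_{prim}, \qquad F^p\cap\bar F^{n-p-1}=\mathcal{H}^{p,n-p}_{prim}\oplus\mathcal{H}^{p+1,n-p-1}_{prim},$$
which are precisely the targets needed to read off $D+C$ and $\bar D+\bar C$ from Definition 2.15.

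The classical inputs I would invoke are: (i) $F^\bullet$ is a filtration by holomorphic subbundles of $\mathcal{H}^n_{prim}$ with respect to the holomorphic structure determined by $\bar\nabla^{GM}$, so $\bar\nabla^{GM}(F^p)\subseteq F^p\otimes\mathcal{A}^{0,1}(M)$; (ii) complex-conjugating, $\bar F^\bullet$ is a filtration by antiholomorphic subbundles, i.e.\ $\nabla^{GM}(\bar F^q)\subseteq\bar F^q\otimes\mathcal{A}^{1,0}(M)$; (iii) Griffiths transversality $\nabla^{GM}(F^p)\subseteq F^{p-1}\otimes\mathcal{A}^{1,0}(M)$ together with its complex conjugate $\bar\nabla^{GM}(\bar F^q)\subseteq\bar F^{q-1}\otimes\mathcal{A}^{0,1}(M)$.

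With these in hand the argument is a one-line intersection. For $\alpha\in\mathcal{H}^{p,n-p}_{prim}\subseteq F^p\cap\bar F^{n-p}$, treating $\alpha$ as a section of $F^p$ and invoking Griffiths transversality gives $\nabla^{GM}\alpha\in F^{p-1}\otimes\mathcal{A}^{1,0}(M)$, while treating it as a section of $\bar F^{n-p}$ and invoking antiholomorphicity gives $\nabla^{GM}\alpha\in\bar F^{n-p}\otimes\mathcal{A}^{1,0}(M)$. Intersecting yields $\nabla^{GM}\alpha\in(\mathcal{H}^{p-1,n-p+1}_{prim}\oplus\mathcal{H}^{p,n-p}_{prim})\otimes\mathcal{A}^{1,0}(M)$, and by Definition 2.15 the two Hodge components are exactly $C\alpha$ and $D\alpha$, so $\nabla^{GM}\alpha=D\alpha+C\alpha$. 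Swapping the roles of $F^\bullet$ and $\bar F^\bullet$ — holomorphicity of $F^p$ combined with conjugate Griffiths transversality on $\bar F^{n-p}$ — gives the parallel identity $\bar\nabla^{GM}\alpha=\bar D\alpha+\bar C\alpha$.

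I do not expect a deep obstacle here; the only care required is in formulating that $\bar F^\bullet$ is antiholomorphic with respect to $\nabla^{GM}$, which is simply the complex conjugate of Griffiths' theorem that $F^\bullet$ is holomorphic. The Hodge subbundles $\mathcal{H}^{p,n-p}_{prim}$ are themselves merely $C^\infty$, so neither filtration alone suffices to pin down the bidegree of $\nabla^{GM}\alpha$; the decomposition only emerges once both filtrations are used simultaneously.
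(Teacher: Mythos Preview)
Your proof is correct and slightly more streamlined than the paper's. Both arguments use Griffiths transversality to place $\nabla^{GM}\alpha$ in $F^{p-1}$; the difference lies in how the complementary bound (no component in $F^{p+1}$) is obtained. You invoke the conjugate filtration directly: since $\bar F^{n-p}$ is preserved by $\nabla^{GM}$ (the complex conjugate of the holomorphicity of $F^\bullet$), intersecting with $F^{p-1}$ immediately isolates the two Hodge pieces. The paper instead passes through the polarization: it differentiates the vanishing $\int_X\alpha\wedge\overline{F^{p+1}\mathcal H^n_{prim}}=0$ using flatness of the pairing under $\nabla^{GM}$, then uses holomorphicity of $F^{p+1}$ under $\bar\nabla^{GM}$ to kill the second term and conclude $\nabla^{GM}\alpha\perp F^{p+1}$. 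Your route avoids the polarization altogether and is the standard filtration-theoretic argument; the paper's route has the minor virtue of making the role of the Hodge--Riemann pairing explicit, which is consistent with how the $tt^*$ metric is set up in that section, but at the cost of an extra step.
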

\begin{proof}Take an arbitrary local section $\alpha$ of $\mathcal{H}_{prim}^{p,n-p}$, we have
\begin{displaymath}
(D+C)(\alpha)=(\Pi_{\mathcal{H}_{prim}^{p,n-p}}+\Pi_{\mathcal{H}_{prim}^{p-1,n-p+1}})(\nabla^{GM}\alpha).
\end{displaymath}
So to prove $\nabla^{GM}=D+C$, by Griffiths transversality, we only need to prove that:
\begin{displaymath}
\nabla^{GM}\alpha\perp F^{p+1}\mathcal{H}^n_{prim}.
\end{displaymath}
First we have
\begin{displaymath}
\int_X\alpha\wedge\overline{F^{p+1}\mathcal{H}^n_{prim}}\equiv0.
\end{displaymath}
Since the Gauss-Manin connection is compatible with the pairing given by the wedge product, we have
\begin{displaymath}
\begin{aligned}
0&=\nabla^{GM}\int_X\alpha\wedge\overline{F^{p+1}\mathcal{H}^n_{prim}}\\
&=\int_X(\nabla^{GM}\alpha)\wedge\overline{F^{p+1}\mathcal{H}^n_{prim}}+\int_X\alpha\wedge(\nabla^{GM}\overline{F^{p+1}\mathcal{H}^n_{prim}})\\
&=\int_X(\nabla^{GM}\alpha)\wedge\overline{F^{p+1}\mathcal{H}^n_{prim}}+\int_X\alpha\wedge\overline{\bar{\nabla}^{GM}F^{p+1}\mathcal{H}^n_{prim}}.
\end{aligned}
\end{displaymath}
Notice that $\bar{\nabla}^{GM}F^{p+1}\mathcal{H}^n_{prim}\subseteq F^{p+1}\mathcal{H}^n_{prim} $, the second part of the last row vanishes and
\begin{displaymath}
\int_X({\nabla}^{GM}\alpha)\wedge\overline{F^{p+1}\mathcal{H}^n_{prim}}\equiv0.
\end{displaymath}
Hence $\nabla^{GM}=D+C$, and similiarly $\bar{\nabla}^{GM}=\bar{D}+\bar{C}$.
\end{proof}
The following $tt^*$ equations follow from above theorem, the flatness of Gauss-Manin connection and that $C$ is holomorphic:
\begin{thm}\textbf{($tt^*$ equations)} The operators $D,\bar{D},C,\bar{C}$ satisfying the following equations: \\
(1) $[C_i,C_j]=[\bar{C}_{\bar {i}},\bar{C}_{\bar {j}}]=0$.\\
(2) $[D_i,\bar{C}_{\bar {j}}]=[\bar{D}_{\bar {i}},C_j]=0$.\\
(3) $[D_i,C_j]=[D_j,C_i]$, $[\bar {D}_{\bar {i}},\bar {C}_{\bar {j}}]=[\bar {D}_{\bar {j}},\bar {C}_{\bar {i}}]$. \\
(4) $[D_i,D_j]=[\bar {D}_{\bar{i}},\bar {D}_{\bar {j}}]=0$, $[D_i,\bar{D}_{\bar{j}}]=-[C_i,\bar{C}_{\bar{j}}]$.\\
\end{thm}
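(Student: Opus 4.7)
The plan is to derive all of (1)--(4) from the single input that the full Gauss--Manin connection $\nabla^{GM}+\bar\nabla^{GM}$ is flat, using the decomposition $\nabla^{GM}=D+C$, $\bar\nabla^{GM}=\bar D+\bar C$ of the preceding theorem together with the Hodge bigrading of $\mathcal{H}^n_{prim}=\bigoplus_p \mathcal{H}^{p,n-p}_{prim}$. Splitting flatness by $(1,0)$/$(0,1)$-type on the base $M$ yields three master identities:
\[
[\nabla^{GM}_i,\nabla^{GM}_j]=0,\quad [\bar\nabla^{GM}_{\bar i},\bar\nabla^{GM}_{\bar j}]=0,\quad [\nabla^{GM}_i,\bar\nabla^{GM}_{\bar j}]=0.
\]
Expanding each in $(D,C,\bar D,\bar C)$ produces three commutator identities with four summands apiece.

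The next step is to grade these summands by their Hodge shift: from Definition \ref{CY-conn-higgs}, $D_i$ and $\bar D_{\bar i}$ preserve the index $p$ in $\mathcal{H}^{p,n-p}_{prim}$, $C_i$ lowers it by one, and $\bar C_{\bar i}$ raises it by one. In the first master identity the summands carry shifts $\{0,-1,-1,-2\}$, in the second $\{0,+1,+1,+2\}$, and in the third $\{0,+1,-1,0\}$. Because the primitive decomposition is a pointwise direct sum, summands of distinct shift are linearly independent, so each homogeneous shift-component of an identity must vanish on its own.

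Reading off the components then yields everything in one pass. From the extremal shifts $\pm 2$ come $[C_i,C_j]=0$ and $[\bar C_{\bar i},\bar C_{\bar j}]=0$, giving (1). From the shift-$(-1)$ piece of the first identity comes $[D_i,C_j]+[C_i,D_j]=0$, equivalent after rewriting to $[D_i,C_j]=[D_j,C_i]$; the bar-version follows symmetrically, giving (3). From the shifts $\pm 1$ of the mixed identity come $[D_i,\bar C_{\bar j}]=0$ and $[\bar D_{\bar i},C_j]=0$, giving (2). The shift-$0$ components give $[D_i,D_j]=0$, $[\bar D_{\bar i},\bar D_{\bar j}]=0$, and the single non-trivial coupling $[D_i,\bar D_{\bar j}]+[C_i,\bar C_{\bar j}]=0$, yielding (4).

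The main difficulty is the type-separation step. One must verify that, when a commutator is applied to a section of pure Hodge type $\mathcal{H}^{p,n-p}_{prim}$, its constituent composites land in the prescribed Hodge summands in the predicted way, so that vanishing of the sum forces each homogeneous component to vanish separately. This is immediate for monomial pieces such as $C_iC_j$ (shift $-2$) versus $[D_i,D_j]$ (shift $0$). The only subtle point arises in the mixed identity: $[D_i,\bar D_{\bar j}]$ and $[C_i,\bar C_{\bar j}]$ are both of shift $0$ and therefore do not decouple, producing precisely the non-trivial coupling in (4). Once this bookkeeping is carried through, the equations drop out by matching degrees.
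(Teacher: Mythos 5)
Your proof is correct and follows essentially the route the paper indicates (the paper writes no proof, saying only that the equations follow from $\nabla^{GM}=D+C$, $\bar{\nabla}^{GM}=\bar{D}+\bar{C}$, the flatness of the Gauss--Manin connection, and the holomorphicity of $C$): you expand the vanishing curvature components and separate terms by their Hodge shift, exactly the intended bookkeeping. The only difference is cosmetic --- you recover (2), i.e.\ the holomorphicity of $C$, as the shift-$\pm 1$ component of the mixed flatness identity instead of citing it as an input, which is legitimate and slightly cleaner.
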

Now we give the $tt*$ geometry structure on Calabi-Yau side, as the summary of the above discussion.
\begin{thm}[$tt^*$ geometry structure on Calabi-Yau side]\label{tt*-CY}
Let $f:\mathbb{C}^{n+2}\longrightarrow \mathbb{C}$ be a non-degenerate polynomial of degree $n+2$. Then we have a $tt^*$ structure
$$\E^{\CY}=(H^{\CY}\to M, \kappa^{\CY},\eta^{\CY}, D^{\CY}, C^{\CY}, \bar{C}^{\CY})$$
with the following data:\\
(1)$M$ is a small neighborhood of $0$ in the deformation space of complex structures of $X_f$.\\
(2)$H^{\CY}$ is the primitive part of the bundle of harmonic $n$-forms on $M$.\\
(3)$\kappa^{\CY}$ is the complex conjugation.\\
(4)$\eta^{\CY}=g(\cdot,\kappa^{\CY}\cdot)$ and $g$ is given by Riemann-Hodge bilinear relation
$$g(u,v)=i^{2p-n}\int_Xu\wedge\bar{v},\qquad u,v\in\mathcal{H}^{p,n-p}_{prim}$$
(5)The $tt^*$ connection and Higgs field are given by {\bf Definition}\ref{CY-conn-higgs}
\end{thm}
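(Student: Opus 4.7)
The plan is to verify the axioms of Definition~1.2 for the proposed sextuple $(H^{\CY}\to M,\kappa^{\CY},\eta^{\CY},D^{\CY},C^{\CY},\bar C^{\CY})$. The decomposition $\nabla^{GM}=D^{\CY}+C^{\CY}$ via Griffiths transversality has just been established in the preceding theorem, so the job is primarily one of packaging, verifying each compatibility condition in Definition~1.2 in turn.

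I would first check that $g$ is a well-defined positive definite Hermitian metric satisfying $g(\kappa^{\CY} u,\kappa^{\CY} v)=\overline{g(u,v)}$. The Hodge--Riemann bilinear relations give, on each primitive Hodge component $\mathcal{H}^{p,n-p}_{prim}$, that $g_p$ is a positive definite Hermitian inner product, and components of distinct Hodge types are orthogonal with respect to the wedge pairing, so the direct sum $g=\sum_p g_p$ is Hermitian. The reality condition is then a short check using the symmetry of $\int_X u\wedge\bar v$ under complex conjugation and swap of arguments, together with the fact that $\kappa^{\CY}$ is complex conjugation.

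Next I would identify $D^{\CY}+\bar D^{\CY}$ with the Chern connection for the holomorphic structure in which the Hodge filtration bundles $F^k\mathcal{H}^n_{prim}$ are holomorphic subbundles. Metric compatibility is obtained by differentiating the constant pairing $\int_X u\wedge v$ along the Gauss--Manin connection, splitting the result into Hodge types, and applying Griffiths transversality; together with the obvious fact that $D^{\CY}$ is of type $(1,0)$ and $\bar D^{\CY}$ of type $(0,1)$, uniqueness of the Chern connection closes this step. The conditions $(D^{\CY}+\bar D^{\CY})\kappa^{\CY}=0$ and $\bar C^{\CY}=\kappa^{\CY}\circ C^{\CY}\circ\kappa^{\CY}$ both follow from the reality of the Gauss--Manin connection: for $\alpha\in\mathcal{H}^{p,n-p}_{prim}$ one has $\nabla^{GM}\kappa^{\CY}\alpha=\overline{\bar\nabla^{GM}\alpha}\in\mathcal{H}^{n-p,p}_{prim}$, so complex conjugation interchanges the Hodge type $(p,n-p)$ with $(n-p,p)$ and the corresponding projections in Definition~\ref{CY-conn-higgs} match up.

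The adjoint relation $g(C^{\CY}_X u,v)=g(u,\bar C^{\CY}_{\bar X}v)$ is the most delicate step and the place where I expect the bulk of the sign chasing. I would start from $\int_X u\wedge\bar v$, differentiate along $X$ using $\nabla^{GM}$, split into Hodge types, and isolate the contributions in which $C^{\CY}_X u\in\mathcal{H}^{p-1,n-p+1}_{prim}$ pairs with $v\in\mathcal{H}^{n-p+1,p-1}_{prim}$; the one-step shift in Hodge degree caused by the Higgs field should exactly match the shift in the $i^{2p-n}$ factor defining $g$, so that the two sides of the adjoint identity agree. Finally, flatness of the total connection $\nabla=D^{\CY}+\bar D^{\CY}+C^{\CY}+\bar C^{\CY}$ is immediate, since this is nothing other than the full Gauss--Manin connection on $H^{\CY}_{prim}$, which is flat by the general theory of variations of Hodge structure; the $tt^*$ equations stated in the preceding theorem then follow by projecting $\nabla^2=0$ onto the appropriate Hodge summands.
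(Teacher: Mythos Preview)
Your proposal is correct and follows the same overall logic as the paper, though you are considerably more thorough. In the paper this theorem is literally presented as ``the summary of the above discussion'': the substantive content is the decomposition $\nabla^{GM}=D+C$, $\bar\nabla^{GM}=\bar D+\bar C$ (the preceding theorem) and the $tt^*$ equations deduced from flatness of $\nabla^{GM}$ together with holomorphicity of $C$; the remaining compatibility conditions of Definition~1.2 (positivity of $g$ via Hodge--Riemann, reality of $g$ under $\kappa$, identification of $D+\bar D$ with the Chern connection, $(D+\bar D)\kappa=0$, $\bar C=\kappa C\kappa$, and the adjointness of $C$ and $\bar C$) are left implicit as standard facts about polarized variations of Hodge structure. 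Your plan spells each of these out explicitly, which is a genuine improvement in completeness over the paper's treatment, but it is not a different argument---just a fuller one.
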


\section{Explicit isomorphism of $tt^*$ geometries from LG to CY models}\label{sec3}

\subsection{Isomorphism of State spaces}

Suppose $f\in\mathbb{C}[z_0, \dots, z_{n+1}]$ is a nondegenerate homogeneous polynomial of degree $d=n+2$, then from {\bf Theorem 1.4}, the number of marginal deformations in the universal unfolding of $f$ equals $m=\text{dim}H^1(X_f, T_{X_f})=\dbinom{2n+3}{n+1}-(n+2)^2$(except for $n=2$ and $d=4$). Take a marginal deformation
$$f(z, u)=f(z)+\sum_{i=1}^mu_i\phi_i(z),\quad {\rm deg}(\phi_i)=n+2,\quad i=1, \dots, m.$$
Denote by $M$ the space of deformation parameters $(u^1,\dots, u^m)$, which is a small neighborhood of the origin in $\mathbb{C}^m$. For any fixed parameter $u_0\in M$, $f(z, u_0)$ defines a n-dimensional Calabi-Yau hypersurface $X_{f(z, u_0)}\subset\mathbb{CP}^{n+1}$, so we get a natural fibration $\pi:Y\longrightarrow M$. The fibre $Y_{u_0}$ is just the Calabi-Yau hypersurface  $X_{f(z, u_0)}$. This fibration can also been regard as the deformation of complex structure near the origin 0$\in$M. In this way, $M$ corresponds to a small neighborhood of the origin in $H^1(X_f, T_{X_f})$, and this gives a natural identification between the base space of LG $tt^*$ bundle and the base space of CY $tt^*$ bundle. In the rest of the paper, we will denote the base space by $M$ and the two $tt^*$ geometries by 
$$H^{LG}\longrightarrow M,\quad H^{CY}\longrightarrow M.$$

\subsection{Isomorphism of Frobenius algebras}

\subsubsection*{Residue map and Identification of pairings}\

We first introduce some properties of the residue map in Griffiths and Carlson \cite{CG}. Suppose $f\in\mathbb{C}[z_0, \dots, z_{n+1}]$ is a nondegenerate homogeneous polynomial of degree $d$, then $f$ defines a smooth hypersurface $X_f\subset\mathbb{CP}^{n+1}$. Denote 
$$\Omega=\sum_{i=0}^{n+1}(-1)^iz_idz_0\wedge\dots\hat{dz_i}\dots dz_{n+1}.$$
Then the expression $\Omega_A=\frac{A\Omega}{f^{a+1}}$ defines a rational $(n+1)$-form with $X_f$ as polar locus, provided that the degree of $A$ is chosen to make the quotient homogeneous of degree zero, i.e. deg$A=(a+1)d-n-2$. The rational form $\Omega_A$ is said to have adjoint level $a$.
\par
To each $k$-dimensional cohomology class on the complement of $X_f$, a $(k-1)$-dimensional cohomology class is defined on $X_f$ itself by the topological residue:  Given a $k-1$-cycle $\gamma$ on $X_f$, let $T(\gamma)$ be the $k$-cycle in $\mathbb{CP}^{n+1}-X_f$ defined by forming the boundary of an $\epsilon$-tubular neighborhood of $\gamma$. This construction defines a map
$$T:H_{k-1}(X_f)\longrightarrow H_k(\mathbb{CP}^{n+1}-X_f),$$
whose formal adjoint, up to a factor of $2\pi i$, is the topological residue. We have the explicit relation:
$$\int_{\gamma}{\rm res}\alpha=\frac{1}{2\pi i}\int_{T(\gamma)}\alpha.$$
The following properties of the residue map are proved in \cite{CG}:
\begin{thm}[\cite{CG} Chapter 3]
Let $\Omega^{n+1}(pX_f)$ be the sheaf consisting of meromorphic $(n+1)$-forms with at most poles of order $p$ on $X_f$. Then we have\\
(1) ${\rm res} \Gamma\Omega^{n+1}((n+1)X_f)=H^n_{prim}(X_f,\mathbb{C})$.\\
(2) ${\rm res}\Gamma\Omega^{n+1}((a+1)X_f)=F^{n-a}H^n_{prim}(X_f,\mathbb{C})$.\\
(3) Let $\Omega_A$ be of adjoint level $a$, then ${\rm res}\Omega_{A}$ has Hodge level $n-a+1$ if and only if $A$ lies in the Jacobian ideal.
\end{thm}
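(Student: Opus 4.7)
The plan is to follow the classical Griffiths--Carlson residue calculus on $\CP^{n+1}$ with polar locus $X_f$, identifying the pole-order filtration on rational $(n+1)$-forms with the Hodge filtration on $H^n_{prim}(X_f, \C)$. For part (1), I would combine the tube map with the long exact sequence of the pair $(\CP^{n+1}, \CP^{n+1} \setminus X_f)$; via Poincar\'e--Lefschetz duality this yields an exact sequence linking $H^{n+1}(\CP^{n+1} \setminus X_f)$ with $H^n(X_f)$, whose image under the residue lands in the primitive part because any class pulled back from the ambient $\CP^{n+1}$ pairs trivially with a tube class (the primitive cohomology being by definition the kernel of the Lefschetz pushforward in the ambient). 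Surjectivity then follows from Grothendieck's algebraic de Rham theorem applied to $\CP^{n+1} \setminus X_f$, which represents every cohomology class by a rational differential form, combined with the reduction-of-pole procedure below, which cuts the pole order down to at most $n+1$.

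For part (2), the containment $\res \Gamma \Omega^{n+1}((a{+}1) X_f) \subseteq F^{n-a}H^n_{prim}$ is a local Hodge-type computation: writing $f$ locally as a coordinate and expanding $\Omega_A = A\Omega/f^{a+1}$, one checks directly that the residue is represented by a smooth form of Hodge type at least $(n-a, a)$. The reverse inclusion proceeds by induction on $a$: every class in $F^{n-a}$ is realized by some rational form of adjoint level at most $a$, and a dimension count against the graded pieces of the Milnor (Jacobian) ring, via the well-known isomorphism $R_f^{(a+1)(n+2) - (n+2)} \cong \operatorname{Gr}^{n-a}_F H^n_{prim}$, forces equality at each step.

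For part (3), the forward direction is Griffiths' reduction-of-pole identity: if $A = \sum_i B_i \, \partial f/\partial z_i$ lies in the Jacobian ideal, then an explicit exact form built from the $B_i$ and $\Omega$ exhibits $\Omega_A$ as cohomologous, modulo exact forms on the complement, to a rational $(n+1)$-form of pole order only $a$. Part (2) then forces $\res \Omega_A \in F^{n-a+1}H^n_{prim}$, i.e.\ of Hodge level at least $n-a+1$. The converse is the contrapositive: the strict matching of graded pieces established in (2) together with the Milnor ring identification implies that a jump in filtration level is possible only when the image of $A$ vanishes in the Jacobian quotient, i.e.\ when $A$ lies in the Jacobian ideal.

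The main obstacle is not any single calculation but the coherence between two filtrations that live in genuinely different worlds: the algebraic pole-order filtration on rational forms on the affine complement versus the analytic Hodge filtration on $X_f$. The technical bridge is Griffiths' reduction-of-pole formula, and once that is established together with Bott vanishing on $\CP^{n+1}$ to control the chain-level manipulations, all three assertions reduce to formal book-keeping at the level of the Jacobian ring.
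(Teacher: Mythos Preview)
The paper does not prove this theorem: it is quoted verbatim from Griffiths--Carlson \cite{CG} and used as a black box, so there is no in-paper argument to compare against. Your outline is essentially the classical Griffiths proof (Gysin/tube sequence for the primitive target, pole-order reduction, and the Jacobian-ring identification of the graded pieces), and it is correct in structure.

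Two small remarks. First, in your dimension count for part~(2) you wrote the graded piece as $R_f^{(a+1)(n+2)-(n+2)}$, which already assumes the Calabi--Yau degree $d=n+2$; the theorem as stated covers arbitrary degree $d$, where the relevant piece is $R_f^{(a+1)d-(n+2)}$. Second, be careful about circularity: the isomorphism $R_f^{(a+1)d-(n+2)} \cong \operatorname{Gr}_F^{n-a} H^n_{prim}$ that you invoke to close part~(2) is precisely the content of the corollary (Theorem~3.2 in the paper) that is \emph{deduced} from this theorem. In the actual Griffiths argument the logic runs the other way: one first establishes the filtration containment and the pole-reduction identity, then identifies the kernel of $A \mapsto (\res\Omega_A)^{n-a,a}$ with the Jacobian ideal directly (via an explicit exactness statement for the Koszul-type complex on $\CP^{n+1}$, using Bott vanishing), and only then reads off the graded isomorphism. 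Your sketch has the right ingredients but the order of dependence in part~(2) should be reversed.
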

The following is a direct corollary of the above properties:
\begin{thm}
The map $A\mapsto ({\rm res}\Omega_A)^{n-a,a}$ induces an isomorphism
$$R_f^{d(a+1)-(n+2)}\longrightarrow H^{n-a,a}_{prim}(X_f),$$
where $({\rm res}\Omega_A)^{n-a,a}$ denotes the $(n-a, a)$-part of ${\rm res}\Omega_A\in H^n_{prim}(X_f,\mathbb{C})$.
\end{thm}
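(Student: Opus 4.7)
The plan is to combine the three properties of the residue map listed in the preceding theorem, carefully tracking degrees and the Hodge filtration. The degree condition $\deg A = d(a+1)-(n+2)$ is exactly the condition that makes $\Omega_A = A\Omega/f^{a+1}$ homogeneous of degree zero, i.e.\ that $\Omega_A$ have adjoint level $a$. So at the level of polynomials we have a well-defined linear map
\begin{equation*}
\Psi: \{A\in\mathbb{C}[z_0,\dots,z_{n+1}] : \deg A = d(a+1)-(n+2)\}\longrightarrow H^n_{prim}(X_f,\mathbb{C}),\quad A\longmapsto (\operatorname{res}\Omega_A)^{n-a,a},
\end{equation*}
where we project $\operatorname{res}\Omega_A$ to its $(n-a,a)$-Hodge component, which makes sense because by (2) we have $\operatorname{res}\Omega_A\in F^{n-a}H^n_{prim}$.

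Next I would check that $\Psi$ descends to the quotient $R_f^{d(a+1)-(n+2)}$. Here the key input is the ``if'' direction of (3): if $A$ lies in the Jacobian ideal, then $\operatorname{res}\Omega_A$ has Hodge level $n-a+1$, i.e.\ $\operatorname{res}\Omega_A\in F^{n-a+1}H^n_{prim}$, and therefore its $(n-a,a)$-component vanishes. Hence $\Psi$ descends to a map $\overline{\Psi}: R_f^{d(a+1)-(n+2)}\to H^{n-a,a}_{prim}(X_f)$.

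For surjectivity, property (2) says every class in $F^{n-a}H^n_{prim}(X_f,\mathbb{C})$ is of the form $\operatorname{res}\Omega_A$ for some $A$ of the correct degree. Since $F^{n-a}H^n_{prim} = H^{n-a,a}_{prim}\oplus F^{n-a+1}H^n_{prim}$, the projection to $H^{n-a,a}_{prim}$ is already surjective from $F^{n-a}H^n_{prim}$, so $\overline{\Psi}$ is surjective. For injectivity, suppose $[A]\in R_f^{d(a+1)-(n+2)}$ with $\overline{\Psi}([A])=0$; then $\operatorname{res}\Omega_A\in F^{n-a+1}H^n_{prim}$, i.e.\ it has Hodge level at least $n-a+1$. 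Invoking now the ``only if'' direction of (3), this forces $A$ to lie in the Jacobian ideal, so $[A]=0$ in $R_f$.

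The main obstacle in this proof is not present here: all the substantive analytic and algebro-geometric content (the surjectivity of the residue onto primitive cohomology, the identification of the Hodge filtration with pole order, and the Jacobian ideal criterion for Hodge level) is packaged into the preceding theorem of Carlson--Griffiths. Once that is granted, the argument is essentially a diagram chase plus bookkeeping of the degree shift $\deg A = (a+1)d - (n+2)$ ensuring $\Omega_A$ really is of adjoint level $a$. The only subtlety worth re-examining is the equivalence of the two formulations of ``Hodge level'' (the filtration step $F^{n-a+1}$ versus the vanishing of the $(n-a,a)$-component), which is immediate from the Hodge decomposition $F^{n-a}=H^{n-a,a}\oplus F^{n-a+1}$ on the primitive cohomology of the Calabi-Yau hypersurface $X_f$.
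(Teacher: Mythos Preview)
Your proposal is correct and matches the paper's approach exactly: the paper simply states that the theorem ``is a direct corollary of the above properties'' (namely the three Carlson--Griffiths facts in the preceding theorem), and what you have written is precisely the spelling-out of that corollary. Your tracking of the degree condition, the use of the ``if'' direction of (3) for well-definedness, property (2) for surjectivity, and the ``only if'' direction of (3) for injectivity is the intended argument.
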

\begin{rem}
In the theorem we denote by $R_f^k$ the degree $k$ part of $R_f$, i.e.
$$R^k_f=\mathbb{C}[z_0, \dots, z_{n+1}]^k/I^k_f,$$
where $\mathbb{C}[z_0, \dots, z_{n+1}]^k$ and $I^k_f$ are the degree $k$ polynomials in $\mathbb{C}[z_0, \dots, z_{n+1}]$ and in $I_f$. Since $I_f$ is a homogeneous ideal, we have $R_f=\oplus_{k=0}^\infty R_f^k$.
\end{rem}
We will also need an explicit C\v{e}ch-theoretic description of $({\rm res}\Omega_A)^{n-a,a}$. Since $X_f$ is smooth, we can cover $\mathbb{CP}^{n+1}$ by $(n+2)$ open subsets:
$$U_j=\left\{\frac{\partial f}{\partial z_j}\neq 0\right\},\quad j=0, \dots, n+1.$$
We have the following result from \cite{CG}:
\begin{lm}[\cite{CG} Proposition 3.1] Let $\Omega_A$ be of adjoint level $a$. Then we have:
$$({\rm res}\Omega_A)^{n-a,a}=c_a\displaystyle\biggl\{\frac{A\Omega_J} {f_J}\displaystyle\biggr\}_{\mid J \mid=a+1},$$
where $f_J=\frac{\partial f}{\partial z_{j_1}}\dots\frac{\partial f}{\partial z_{j_k}}$, $\Omega_J=i\left( \frac{\delta}{\delta z_{j_1}}\wedge\dots\wedge \frac{\delta}{\delta z_{j_k}}\right)\Omega$ for $J=(j_1,\dots, j_k)$, and $c_a=\frac {(-1)^{n+a(a+1)/2}} {a!}$.
\end{lm}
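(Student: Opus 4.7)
The strategy is to apply Griffiths's iterated reduction of poles and then read off the result through the \v{C}ech--to--Dolbeault dictionary. Since $f$ is nondegenerate, the partials $\partial f/\partial z_j$ have no common zero on $X_f$, so the affine opens $U_j=\{\partial f/\partial z_j\neq 0\}$ cover a neighborhood of $X_f$ in $\mathbb{CP}^{n+1}$; their traces on $X_f$ form a Leray cover, and \v{C}ech cohomology on it computes $H^a(X_f,\Omega^{n-a}_{X_f})\cong H^{n-a,a}_{prim}(X_f)$.

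First I would establish the single-step pole-reduction identity. For a rational form $B\Omega/f^{k+1}$ of the correct degree, a direct computation of $d\bigl(B\,\iota_{\partial/\partial z_j}\Omega/f^k\bigr)$ on $U_j$, combined with the basic identity $df\wedge\iota_{\partial/\partial z_j}\Omega=(\partial_j f)\Omega-(\text{terms of the same type with lower pole})$, yields a relation of shape
\begin{equation*}
\frac{B\Omega}{f^{k+1}}\;\equiv\;\frac{1}{-k\,\partial_j f}\cdot(\text{pole-}k\text{ form involving }\iota_{\partial/\partial z_j}\Omega)\pmod{\text{exact on }U_j}.
\end{equation*}
Starting from $\Omega_A=A\Omega/f^{a+1}$ and iterating this identity with indices $j_1,\ldots,j_{a+1}$, one obtains on the intersection $U_J=U_{j_1}\cap\cdots\cap U_{j_{a+1}}$ a holomorphic $(n-a)$-form of the shape $\pm(1/a!)\cdot A\Omega_J/f_J$, modulo a sum of forms that are exact on smaller intersections.

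Second, I would assemble these local representatives into a genuine \v{C}ech cocycle. The exact terms produced at each intermediate step in the reduction are precisely what is needed to form the \v{C}ech coboundary on $(a+1)$-fold intersections; antisymmetrizing over the ordering of $J$ converts the sequential order of reductions into the alternating structure of $\check C^a$. The resulting cocycle $\{A\Omega_J/f_J\}_{|J|=a+1}$ defines a class in $H^a(X_f,\Omega^{n-a}_{X_f})\cong H^{n-a,a}_{prim}(X_f)$, and by construction it represents the image of $\mathrm{res}(\Omega_A)$ under the projection onto $H^{n-a,a}$. That the other Hodge pieces do not contribute is guaranteed by part (2) of the preceding theorem, which places $\mathrm{res}(\Omega_A)$ in $F^{n-a}H^n_{prim}(X_f,\mathbb{C})$, together with the fact that the \v{C}ech-Dolbeault identification extracts exactly the top Hodge component of a class in $F^{n-a}$.

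The main obstacle is the precise bookkeeping of the constant $c_a=(-1)^{n+a(a+1)/2}/a!$. The factor $1/a!$ arises from the telescoping denominators $(-1/a)(-1/(a-1))\cdots(-1)$ produced by the $a+1$ successive integrations by parts. The sign $(-1)^n$ comes from the orientation convention relating $\iota_{\partial/\partial z_j}\Omega$ to $(-1)^j z_j^{-1}\cdot dz_0\wedge\cdots\widehat{dz_j}\cdots\wedge dz_{n+1}$ together with the parity of the $(n+1)$-form $\Omega$. The combinatorial sign $(-1)^{a(a+1)/2}$ records the number of transpositions needed to reverse the tuple $(j_1,\ldots,j_{a+1})$, which enters when one converts the ordered sequence of reductions into the antisymmetric \v{C}ech cochain. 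Verifying that the exact terms at each intermediate stage match the \v{C}ech coboundary (rather than contributing an obstruction) and that all these signs combine into the stated $c_a$ is the most delicate part of the argument and would occupy the bulk of a detailed write-up.
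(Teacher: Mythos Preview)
The paper does not supply its own proof of this lemma; it is quoted verbatim as Proposition~3.1 of Carlson--Griffiths \cite{CG} and used as a black box. Your sketch is precisely the Carlson--Griffiths argument: iterate the Griffiths pole-reduction identity over the Jacobian cover $\{U_j\}$, feed the local representatives into the \v{C}ech--Dolbeault isomorphism, and track the constants. So there is nothing in the paper to compare against, but your outline is the standard one and is sound.

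One small point of bookkeeping you may wish to tighten: the passage from pole order $a+1$ to a simple pole requires $a$ integrations by parts (not $a+1$), producing the factors $(-1/a)(-1/(a-1))\cdots(-1/1)=(-1)^a/a!$; the $(a+1)$st index in $J$ enters when you take the Poincar\'e residue of the resulting simple-pole form along $X_f$. Keeping these two stages separate makes the origin of the sign $(-1)^{a(a+1)/2}$ and of the factor $1/a!$ cleaner than the account you gave.
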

From now on we focus on the case $d=n+2$. Then $X_f$ is a Calabi-Yau hypersurface and $H^{n, 0}(X_f)$ is the (1-dimensional) space of holomorphic volume forms. Denote $\Omega=(-1)^n({\rm res} 1)^{n, 0}$ and consider the isomorphism 
$$\eta: H^a(\wedge^aTX_f)\longrightarrow H^{n-a,a}(X_f),\quad [s]\mapsto [s\vdash\Omega].$$
Note that $\oplus_{a=0}^nH^a(\wedge^aTX_f)$ has a commutative ring structure given by the cup product. We define a ring structure on $H^n(X_f)=\oplus_{a=0}^nH^{n-a,a}(X_f)$ by the Yukawa product
$$H^{n-a, a}(X_f)\times H^{n-b, b}(X_f)\longrightarrow H^{n-a-b, a+b}(X_f),\quad [\alpha]*[\beta]=\eta(\eta^{-1}[\alpha]\wedge\eta^{-1}[\beta]).$$
With respect to this ring structure, we prove that the residue map is a homomorphism:
\begin{thm}
Define $r: \oplus_{a=0}^nR_f^{(n+2)a}\longrightarrow H^n_{prim}(X_f)=\oplus_{a=0}^nH_{prim}^{n-a, a}(X_f)$ by
$$r(A)=({\rm res}\Omega_A)^{n-a, a},\quad\forall {\rm deg}A=(n+2)a$$
and define $r^{\prime}: \oplus_{a=0}^nR_f^{(n+2)a}\longrightarrow H^n_{prim}(X_f)=\oplus_{a=0}^nH_{prim}^{n-a, a}(X_f)$ by 
$$r^{\prime}(A)=c_a^{-1}r(A)$$
then $r^{\prime}$ is a ring isomorphism (up to a sign, which depends on a and n).
\end{thm}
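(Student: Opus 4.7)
The plan is to use the explicit \v{C}ech formula from the preceding lemma to translate the Yukawa product into a \v{C}ech-theoretic calculation on the cover $\{U_j\}_{j=0}^{n+1}$ of $X_f$, and then compare with the polynomial product in $R_f^{(n+2)*}$. The preceding theorem already supplies the graded bijection $r\colon R_f^{(n+2)a} \xrightarrow{\sim} H^{n-a,a}_{prim}(X_f)$ in each degree, so the only content of the claim is multiplicativity; by the grading it suffices to check $r'(AB) = \pm\, r'(A)\ast r'(B)$ for homogeneous $A \in R_f^{(n+2)a}$, $B \in R_f^{(n+2)b}$ with $a+b \le n$. For $a+b > n$ both sides vanish automatically, since $R_f$ has socle degree $n(n+2)$ and $H^{n-a-b,a+b}_{prim}(X_f)=0$.

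Unpacking the Yukawa product $[\alpha]\ast[\beta] = \eta(\eta^{-1}[\alpha]\wedge\eta^{-1}[\beta])$, the statement is equivalent to saying that the composition $\tilde r':= \eta^{-1}\circ r'\colon \oplus_a R_f^{(n+2)a}\to \oplus_a H^a(\wedge^a TX_f)$ is a ring homomorphism up to sign, with the target carrying the cup product of polyvector fields. First I would describe $\tilde r'$ via the cover $\{U_j\}$: on $U_J$ with $|J|=a+1$ the product $f_J=\prod_i \partial_{j_i}f$ is nowhere zero, so $\partial_J/f_J := (\partial/\partial z_{j_0}\wedge\cdots\wedge\partial/\partial z_{j_a})/f_J$ is a well-defined polyvector field on $\CP^{n+1}$, and a direct interior-product computation yields $(\partial_J/f_J)\vdash \Omega = \pm\,\Omega_J/f_J$. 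The short exact sequence $0\to TX_f\to T\CP^{n+1}|_{X_f}\to \O_{X_f}(n+2)\to 0$ and its wedge-power counterparts realize $\tilde r'([A])\in H^a(\wedge^a TX_f)$ as the image under the connecting homomorphism of the cocycle $\{A\partial_J/f_J\}_{|J|=a+1}$, modulo the sign absorbed into $r'=c_a^{-1}r$.

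Next I would compute the cup product of these polyvector-field cocycles via Alexander--Whitney, $(\xi\cup\eta)_{j_0,\dots,j_{a+b}} = \xi_{j_0,\dots,j_a}\wedge \eta_{j_a,\dots,j_{a+b}}$. The naive wedge $\partial_{j_0,\dots,j_a}\wedge\partial_{j_a,\dots,j_{a+b}}$ vanishes because of the repeated index $j_a$, but the correct description via the connecting homomorphism replaces this with a contraction against $\partial_{j_a} f = f_{j_a}$, which exactly cancels one factor of $f_{j_a}$ in the denominator $f_J f_K$ to produce the $(a+b)$-cocycle $\{\pm c_a c_b\, AB\,\partial_L/f_L\}_{|L|=a+b+1}$. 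Applying $\eta$ and comparing with Griffiths' representative for $r(AB)$ shows that the residue map converts the polynomial product into the Yukawa product up to the global factor $\pm c_a c_b/c_{a+b}$, which is absorbed exactly by the renormalization $r'=c_a^{-1}r$, leaving only a sign depending on $a$ and $n$.

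The main obstacle is the careful accounting of Koszul signs: from the interior product with $\Omega$, from the \v{C}ech cup product and its Alexander--Whitney formula, from the resolution of the repeated-index ambiguity via the connecting homomorphism, and from the explicit form $c_a = (-1)^{n+a(a+1)/2}/a!$. No single step is hard in isolation, but they must be reconciled simultaneously; in practice one reduces to a generating case (for instance $a=b=1$, which corresponds to the classical Yukawa coupling on $H^{n-1,1}_{prim}$) and then bootstraps by induction using that $\oplus_a R_f^{(n+2)a}$ is generated in low degree. The residual sign that survives all the cancellations is precisely the ``sign depending on $a$ and $n$'' of the statement, and combined with the graded bijectivity already established, this yields that $r'$ is a ring isomorphism.
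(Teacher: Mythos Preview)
Your overall strategy matches the paper's: reduce to showing that $\hat r:=\eta^{-1}\circ r'$ is a ring homomorphism into $\bigoplus_a H^a(\wedge^a TX_f)$ with its cup product, and verify this via the explicit \v{C}ech representatives supplied by Lemma~3.4. The paper, however, executes this far more directly and avoids the detour you take.

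The difference is in the choice of polyvector representative. You work with $(a{+}1)$-vectors $\partial_J/f_J$ on $\CP^{n+1}$ (with $|J|=a+1$) and then invoke the connecting homomorphism of the normal-bundle sequence to descend to $H^a(\wedge^a TX_f)$; this is what creates your repeated-index problem in the Alexander--Whitney product, which you then have to ``resolve''. The paper instead writes $\hat r(A)$ directly as an $a$-vector \v{C}ech cocycle on $X_f$: contracting $\{A\Omega_I/f_I\}_{|I|=a+1}$ against the volume form $\Omega=\{\Omega_i/f_i\}$ drops the \emph{first} index of $I$, giving
\[
\hat r(A)=(-1)^a\Bigl\{A\,\tfrac{\delta}{\delta z}_{\,I\setminus\{i_0\}}\big/ f_{I\setminus\{i_0\}}\Bigr\}_{I=(i_0,\dots,i_a)}.
\]
With these representatives the Alexander--Whitney formula is a one-line check: for $K=(i_0,\dots,i_a,j_1,\dots,j_b)$ with $i_a=j_0$, the overlap index is removed from the second factor as its first index, so the wedge $\tfrac{\delta}{\delta z}_{i_1,\dots,i_a}\wedge\tfrac{\delta}{\delta z}_{j_1,\dots,j_b}$ has no repetition and equals $\tfrac{\delta}{\delta z}_{K\setminus\{k_0\}}$ on the nose, while the denominators $f_{I\setminus\{i_0\}}f_{J\setminus\{j_0\}}=f_{K\setminus\{k_0\}}$ match exactly. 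No connecting homomorphism, no vanishing wedge to repair, and no induction on degree are needed.

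Your sketch of how the connecting homomorphism cancels the repeated index is plausible but only asserted, and the fallback induction (``bootstrap from $a=b=1$'') would require that $\bigoplus_a R_f^{(n+2)a}$ is generated in degree $n+2$, which you have not justified and which is not needed. Both gaps disappear once you pass to the paper's $a$-vector representatives; the contraction with $\Omega$ is precisely what shifts the index count so that the cup product matches the polynomial product without any correction.
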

\begin{proof}
It is shown in {\bf Theorem} 3.2 that $r^{\prime}$ is a bijection, so we only need to prove that it is a ring homomorphism. By the definition of product `$*$', this is equivalent to
$$\hat{r}:\oplus_{a=0}^nR_f^{(n+2)a}\longrightarrow\oplus_{a=0}^nH^a(\wedge^aTX_f),\quad [A]\mapsto c_a^{-1}\cdot \eta^{-1}[({\rm res}\Omega_A)^{n-a, a}],\quad \forall{\rm deg}A=(n+2)a$$
being a ring homomorphism. Assume deg$A=(n+2)a$, deg$B=(n+2)b$, we will show that  $\hat{r}(AB)=\hat{r}(A)\wedge\hat{r}(B)$. By {\bf Lemma} 3.4, we have
$$({\rm res}\Omega_A)^{n-a,a}=c_a\displaystyle\biggl\{\frac{A\Omega_I} {f_I}\displaystyle\biggr\}_{\mid I \mid=a+1},$$
$$({\rm res}\Omega_B)^{n-b,b}=c_b\displaystyle\biggl\{\frac{B\Omega_J} {f_J}\displaystyle\biggr\}_{\mid J \mid=b+1},$$
$$({\rm res}\Omega_{AB})^{n-a-b,a+b}=c_{a+b}\displaystyle\biggl\{\frac{AB\Omega_K} {f_K}\displaystyle\biggr\}_{\mid K \mid=a+b+1},$$
$$\Omega=(-1)^n({\rm res} 1)^{n, 0}=\displaystyle\biggl\{\frac {\Omega_i} {f_i}\displaystyle\biggr\}.$$
By taking the contraction with $\Omega$, we have
\begin{displaymath}
\hat{r}(A)=(-1)^a\left\{\frac{A\frac{\delta} {\delta z}_{ I-\{i_0\}}} {f_{ I -\{i_0\}}}\right\}_{I=(i_0,...,i_a)}.
\end{displaymath}
\begin{displaymath}
\hat{r}(B)=(-1)^b\left\{\frac{B\frac{\delta} {\delta z}_{J -\{j_0\}}} {f_{J -\{j_0\}}}\right\}_{J=(j_0,...,j_b)}.
\end{displaymath}
\begin{displaymath}
\hat{r}(AB)=(-1)^{a+b}\left\{\frac{AB\frac{\delta} {\delta z}_{K -\{k_0\}}} {f_{ K -\{k_0\}}}\right\}_{K=(k_0,...,k_{a+b})}.
\end{displaymath}
Here we used the notation
\begin{displaymath}
{\frac{\delta} {\delta z}_{I=(i_0,...,i_a)}}={\frac{\delta} {\delta z_{i_0}}}\wedge...\wedge {\frac {\delta} {\delta z_{i_a}}}.
\end{displaymath}
By direct calculation, we have
\begin{displaymath}
(\hat{r}(AB))_{K=(i_0,...,i_a,j_1,...,j_b)}=(\hat{r}(A))_{ I=(i_0,...,i_a)}\wedge(\hat{r}(B))_{J=(j_0,...,j_b)},\quad for\text{  }i_a=j_0,
\end{displaymath}
hence $\hat{r}(AB)=\hat{r}(A)\wedge\hat{r}(B)$.
\end{proof}
By using {\bf Lemma} 3.4, Griffiths and Carlson \cite{CG} proved the following:
\begin{thm}[\cite{CG} Chapter 3]
$$\int_{X_f}r(A)\cup r(B)=k_{ab}{\rm Res}_f(A,B), \quad degA=(n+2)a,\quad degB=(n+2)b,\quad a+b=n,$$
here $k_{ab}=\frac{(-1)^{\frac{a(a-1)+b(b-1)}{2}+b^2}}{a!b!}$.
\end{thm}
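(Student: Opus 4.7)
The plan is to compute both sides in terms of explicit \v{C}ech cocycles on the affine cover $\{U_j=\{\partial f/\partial z_j\neq 0\}\}_{j=0}^{n+1}$ of $X_f$, and then identify the resulting top \v{C}ech class with the Milnor--ring residue pairing $\mathrm{Res}_f(A,B)$ via the Grothendieck residue symbol. This mirrors the strategy used in the proof of the previous ring-isomorphism theorem, but now pushed one degree further so that Serre duality / integration over $X_f$ enters.

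First, by Lemma 3.4 the classes $r(A)$ and $r(B)$ are represented by the \v{C}ech cocycles
$$r(A)=c_a\left\{\frac{A\,\Omega_I}{f_I}\right\}_{|I|=a+1},\qquad r(B)=c_b\left\{\frac{B\,\Omega_J}{f_J}\right\}_{|J|=b+1}$$
with values in holomorphic forms on $X_f$ of degrees $n-a$ and $n-b$ respectively, under the \v{C}ech--Dolbeault isomorphism $H^{n-a,a}(X_f)\cong H^{a}(X_f,\Omega^{n-a}_{X_f})$. The \v{C}ech cup product formula then represents $r(A)\cup r(B)\in H^{n}(X_f,\mathbb{C})$. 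Since $a+b=n$ and the cover has exactly $n+2$ open sets, the unique nontrivial component of a top \v{C}ech cochain is indexed by $K=(0,1,\ldots,n+1)$.

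Second, I would carry out the wedge computation on this single component. A direct expansion of $\Omega_I\wedge\Omega_J$ for $I=(0,\ldots,a)$ and $J=(a,\ldots,n+1)$ (using the alternating properties of the contraction operators $\iota(\partial/\partial z_J)$ defining $\Omega_J$), combined with the vanishing of $df$ on $X_f$, rewrites the top representative, up to an explicit sign, as a constant multiple of
$$\frac{AB\,dz_0\wedge\cdots\wedge dz_{n+1}}{\partial_0 f\cdots \partial_{n+1}f}$$
pulled back to $X_f$ through the adjunction formula. Third, I would invoke the Grothendieck residue theorem: integration of such a top \v{C}ech class against the fundamental class of $X_f$ yields the global residue symbol
$$\mathrm{Res}\!\left[\frac{AB\,dz_0\wedge\cdots\wedge dz_{n+1}}{\partial_0 f,\ldots,\partial_{n+1}f}\right],$$
which by the definition of the Milnor-ring pairing on a graded complete intersection equals $\mathrm{Res}_f(A,B)$.

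The final step is to assemble all the prefactors. The constants $c_a c_b$ from Lemma 3.4, together with the shuffle sign in the \v{C}ech cup product and the extra sign arising when passing from $\Omega_I\wedge\Omega_J / (f_I f_J)$ to the Grothendieck symbol, must combine to produce $k_{ab}=\frac{(-1)^{(a(a-1)+b(b-1))/2+b^{2}}}{a!\,b!}$. This sign bookkeeping is the main obstacle: signs enter at three independent places (the contractions defining $\Omega_J$, the shuffle in the \v{C}ech cup, and the Grothendieck residue convention), and only their combined parity yields the stated $k_{ab}$. Since the entire calculation, including the precise sign verification, is carried out in Chapter 3 of Carlson--Griffiths \cite{CG}, I would rely on (and refer to) their bookkeeping to conclude.
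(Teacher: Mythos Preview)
The paper does not actually prove this statement: it is stated as a theorem quoted from \cite{CG}, Chapter~3, with no argument supplied in the text. Your proposal is therefore not competing with any proof in the paper itself, and what you have sketched---representing $r(A)$ and $r(B)$ by the \v{C}ech cocycles of Lemma~3.4, taking their cup product on the cover $\{U_j\}$, rewriting the top component as a Grothendieck residue symbol, and then invoking the local/global residue theorem to land on $\mathrm{Res}_f(A,B)$---is precisely the Carlson--Griffiths argument you cite at the end. So your approach is correct and coincides with the intended one; the only real content missing is the explicit sign verification, which you have (reasonably) deferred to \cite{CG}.
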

In summary, we have proved the following isomorphism of Frobenius algebras:
\begin{thm}
Let $f\in\mathbb{C}[z_0, \dots, z_{n+1}]$ be a nondegenerate homogeneous polynomial of degree $n+2$. Then the milnor ring $R_f$ is a Frobenius algebra with the multiplication given by $[A]\cdot[B]=[AB]$ and the pairing given by 
$$([A], [B]):={\rm Res}_f(A, B),\quad {\rm deg}A+{\rm deg}B=(n+2)n.$$
Consider the Frobenius algebra structure on $H^n_{prim}(X_f)=\oplus_{a=0}^nH^{n-a, a}(X_f)$, whose multiplication is given by Yukawa product
$$[\alpha]*[\beta]:=\eta(\eta^{-1}[\alpha]\wedge\eta^{-1}[\beta]),$$
where $\eta$ is the isomorphism
$$\eta: H^a(\wedge^aTX_f)\longrightarrow H^{n-a,a}(X_f),\quad [s]\mapsto [s\vdash\Omega],$$
$\Omega=(-1)^n({\rm res} 1)^{n, 0}$ is a holomorphic volume form and $c_a=\frac {(-1)^{n+a(a+1)/2}} {a!}$. And the Frobenius pairing on $H^n_{prim}(X_f)=\oplus_{a=0}^nH^{n-a, a}(X_f)$ is defined by
$$(\alpha, \beta):={\rm Res}_f(A, B),$$
 where
 $$\alpha=({\rm res}\Omega_A)^{n-a, a},\quad \beta=({\rm res}\Omega_B)^{n-b, b},\quad {\rm deg}A=(n+2)a,\quad {\rm deg}B=(n+2)b,\quad a+b=n.$$
Then $\oplus_{a=0}^nR_f^{(n+2)a}$ is a Frobenius subalgebra of $R_f$, and the map $$r^{\prime}: \oplus_{a=0}^nR_f^{(n+2)a}\longrightarrow H^n_{prim}(X_f)=\oplus_{a=0}^nH_{prim}^{n-a, a}(X_f)$$ defined by
$$r^{\prime}(A)=c_a^{-1}({\rm res}\Omega_A)^{n-a, a},\quad\forall {\rm deg}A=(n+2)a$$
is an isomorphism of Frobenius algebras(up to constants, which depend on the gradation and n).
\end{thm}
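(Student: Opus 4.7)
The plan is to recognize that this final theorem is essentially a consolidation of the two preceding results---namely, the theorem that $r'$ is a ring isomorphism onto its image, and the Griffiths--Carlson pairing identification $\int_{X_f} r(A)\cup r(B) = k_{ab}\Res_f(A,B)$---so my task reduces to verifying that each side genuinely carries a Frobenius algebra structure and then assembling.

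First I would verify that $(R_f, \Res_f)$ is a Frobenius algebra. Since $f$ is a nondegenerate homogeneous polynomial of degree $n+2$, $R_f$ is a finite-dimensional graded local Gorenstein algebra with one-dimensional socle in top degree $(n+2)n$, and the Grothendieck residue pairing is classically associative ($\Res_f(AB,C)=\Res_f(A,BC)$) and nondegenerate. The subspace $\bigoplus_{a=0}^{n} R_f^{(n+2)a}$ is closed under multiplication since $(n+2)a + (n+2)b = (n+2)(a+b)$, and nondegeneracy of the restricted pairing is automatic: the complement of degree $(n+2)a$ relative to the top degree is $(n+2)n - (n+2)a = (n+2)(n-a)$, which again lies in the subalgebra, so the full pairing restricts nondegenerately. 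Hence $\bigoplus_{a=0}^n R_f^{(n+2)a}$ is a Frobenius subalgebra.

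Next I would verify that $(H^n_{prim}(X_f), *, (\cdot,\cdot))$ is a Frobenius algebra. The Yukawa product is well-defined because $\eta:H^a(\wedge^a TX_f)\to H^{n-a,a}(X_f)$, $[s]\mapsto[s\vdash\Omega]$, is an isomorphism (contraction with the nonvanishing holomorphic volume form $\Omega$ on the Calabi--Yau $X_f$), and the cup product of polyvector fields supplies an associative commutative ring structure that transports to $*$. The defining pairing is nondegenerate because $r'$ is a bijection onto $H^n_{prim}(X_f)$ and the residue pairing is nondegenerate on the subalgebra we just constructed, and Frobenius compatibility follows from associativity of $\Res_f$ combined with the definition $(\alpha,\beta):=\Res_f(A,B)$.

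Finally, the desired isomorphism of Frobenius algebras follows by combining the preceding ring-homomorphism theorem (giving multiplicativity and bijectivity of $r'$) with the Griffiths--Carlson pairing identity $\int_{X_f} r(A)\cup r(B) = k_{ab}\Res_f(A,B)$. The only ``obstacle'' is pure bookkeeping: the constants $c_a$ absorbed into the definition $r'(A) = c_a^{-1}(\res\Omega_A)^{n-a,a}$ and the constants $k_{ab}$ coming from the pairing comparison do not simultaneously cancel across all bidegrees, which is precisely why the statement asserts only an isomorphism ``up to constants depending on the gradation and $n$''; one could in principle absorb them by a further bidegree-dependent rescaling, but the present normalization is the one compatible with the Higgs field identification in the larger $tt^*$ picture.
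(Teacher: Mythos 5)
Your proposal is correct and matches the paper's treatment: the paper states this theorem with the words ``In summary, we have proved the following isomorphism of Frobenius algebras,'' i.e.\ it is obtained exactly as you do, by assembling the preceding ring-isomorphism result for $r'$ (proved via the \v{C}ech description of $({\rm res}\,\Omega_A)^{n-a,a}$) with the Griffiths--Carlson pairing identity $\int_{X_f} r(A)\cup r(B)=k_{ab}\Res_f(A,B)$, with the constants $c_a$, $k_{ab}$ accounting for the ``up to constants'' qualifier. Your explicit verification that both sides are Frobenius algebras (Gorenstein/socle argument on the LG side, transport through $\eta$ on the CY side) is standard and consistent with what the paper leaves implicit.
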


In a $tt^*$ geometry we have the $tt^*$ metric $g$ and the real form $\kappa$. Define a complex-bilinear form $\eta$ by $\eta(\alpha,\beta)=g(\alpha, \kappa\beta)$. We want to compare $\eta^{LG}$ and $\eta^{CY}$ with their corresponding pairings in Frobenius algebras. Notice that 
$$\eta^{LG}(\alpha, \beta)=\int_{\mathbb{C}^{n+2}}\alpha\wedge*\beta, \quad\forall\Delta_f\text{-harmonic } (n+2)-\text{forms } \alpha, \beta$$
and
$$\eta^{CY}(\alpha, \beta)=i^{a-b}\int_{X_f}\alpha\wedge\beta,\quad\forall \alpha\in H_{prim}^{n-a, a}(X_f),\quad \beta\in H_{prim}^{n-b, b}(X_f),\quad a+b=n.$$

Fan and Shen \cite{FS} proved the following, which corresponds $\eta^{LG}$ to the residue pairing ${\rm Res}_f$ on milnor ring.
\begin{thm}[\cite{FS} {\bf Thorem} 3.4]
Suppose $f\in\mathbb{C}[z_1, \dots, z_{n}]$ is a nondegenerate quasi-homogeneous polynomial, $\alpha, \beta$ are $\Delta_f$-harmonic $n$-forms on $\mathbb{C}^n$. Then there are polynomials $A, B\in\mathbb{C}[z_1, \dots, z_{n}]$ and $(n-1)$-forms $\mu, \nu$ such that $\alpha=Adz_1\wedge\dots\wedge dz_{n}+\bar{\partial}_f\mu$ and $\beta=Bdz_1\wedge\dots\wedge dz_{n}+\bar{\partial}_f\nu$. We have the following:
$$\int_{\mathbb{C}^n}\alpha\wedge*\beta=k_n{\rm Res}_f(A, B),$$
where  $k_n=\frac{(-1)^{n(n-1)/2}i^n}{2^n}$.
\end{thm}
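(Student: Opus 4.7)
The plan is to reduce the analytic pairing $\int_{\mathbb{C}^n}\alpha\wedge *\beta$ to an algebraic bilinear form on the Milnor ring $R_f$, show that this form satisfies the Frobenius property, and then invoke uniqueness to match it with the residue pairing up to an explicit scalar.

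\textbf{Step 1 (reduction to polynomial representatives).} The first task is to establish the bilinear orthogonality
\[
\int_{\mathbb{C}^n}(\bar\partial_f\mu)\wedge *\beta = 0 \quad\text{whenever } \beta\in\mathcal{H}^n_f.
\]
The argument is integration by parts: expanding $\bar\partial_f=\bar\partial+\partial f\wedge$, one transfers $\bar\partial$ onto $*\beta$ via Stokes' theorem (boundary terms at infinity vanish by the exponential decay of $\Delta_f$-harmonic forms noted in the remark after Theorem 2.4), and then uses the K\"ahler identities of Lemma 2.2 combined with the full harmonic relations $\bar\partial_f^\dag\beta=\partial_f^\dag\beta=0$ to show that the action produced on $*\beta$ cancels the contribution from the $\partial f\wedge\mu$ piece. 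Applying this on both slots exhibits $\int\alpha\wedge *\beta$ as a well-defined bilinear form $\tilde\eta\colon R_f\otimes R_f\to\mathbb{C}$, $\tilde\eta([A],[B]):=\int\alpha\wedge*\beta$.

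\textbf{Step 2 (Frobenius property and proportionality).} The Milnor ring $R_f$ is a Gorenstein Artinian local algebra with one-dimensional socle, so any nondegenerate symmetric bilinear form on $R_f$ satisfying the Frobenius identity $b(xy,z)=b(x,yz)$ is unique up to a nonzero scalar. The residue pairing $\operatorname{Res}_f$ is such a pairing by construction. For $\tilde\eta$, the key observation is that a holomorphic polynomial $C$ satisfies $\bar\partial C=0$, so $C\cdot\bar\partial_f\nu=\bar\partial_f(C\nu)$, i.e.\ multiplication by $C$ preserves $\bar\partial_f$-cohomology classes. Combined with the $\mathbb{C}$-linearity of the Hodge star, which allows $C$ to be moved across the wedge product, Step 1 reduces both $\tilde\eta([AC],[B])$ and $\tilde\eta([A],[CB])$ to the same polynomial-level integral, giving the Frobenius property. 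Hence $\tilde\eta = k_n\cdot\operatorname{Res}_f$ for some $k_n\in\mathbb{C}^{\times}$.

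\textbf{Step 3 (determining the constant).} Pin down the scalar $k_n=\frac{(-1)^{n(n-1)/2}i^n}{2^n}$ by evaluating both sides on a tractable case. The cleanest choice is the Morse singularity $f=\tfrac12(z_1^2+\cdots+z_n^2)$, for which $R_f=\mathbb{C}$, $\operatorname{Res}_f(1,1)=1$, and the unique harmonic $n$-form admits an explicit product-Gaussian representation; the corresponding $L^2$ integral then reduces to an elementary Gaussian computation, whose answer matches the asserted $k_n$ after bookkeeping of the Hodge-star constants on $dz_1\wedge\cdots\wedge dz_n$.

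The principal obstacle is Step 1. The Hermitian orthogonality $\mathcal{H}^n_f\perp\operatorname{im}\bar\partial_f$ with respect to $g(u,v)=\int u\wedge *\bar v$ is standard (Theorem 2.7), but our pairing $\int u\wedge *v$ omits the complex conjugation; the transpose adjoint of $\bar\partial_f$ with respect to this bilinear pairing is therefore \emph{not} literally the Hermitian adjoint $\bar\partial_f^\dag$. Securing the required cancellation forces one to exploit the full harmonic system $\bar\partial_f\beta=\bar\partial_f^\dag\beta=\partial_f\beta=\partial_f^\dag\beta=0$ together with the complete K\"ahler identity package, and to track bi-degrees and signs carefully throughout the integration-by-parts computation. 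Once this delicate bookkeeping is in place, Steps 2 and 3 follow along recognizable algebraic lines.
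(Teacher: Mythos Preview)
Your approach is genuinely different from the paper's, and Steps 1--2 are essentially sound (indeed Step 1 is easier than you fear: since $\Delta_f$ is a real operator, $\bar\beta$ is again harmonic, and $\eta(\bar\partial_f\mu,\beta)=g(\bar\partial_f\mu,\bar\beta)=g(\mu,\bar\partial_f^{\dag}\bar\beta)=0$ directly, with the boundary term controlled by the exponential decay of $\bar\beta$ against the polynomial growth of $\mu$). The paper, by contrast, does not use Gorenstein uniqueness at all: in Appendix A it first proves (Theorem A.4) that for a compactly supported $\bar\partial_f$-closed $\alpha_1$ and a $\bar\partial_{-f}$-closed $\alpha_2$ one has $\int_{\mathbb{C}^n}\alpha_1\wedge\alpha_2=\operatorname{Res}_f(A_1,A_2)$, by explicitly constructing a primitive $\mu$ with $\bar\partial_f\mu=dz_1\wedge\cdots\wedge dz_n$ via iterated contractions and identifying the resulting boundary term on a large sphere with the Bochner--Martinelli kernel. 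Corollary A.5 then applies this to $\alpha_1=\alpha$ and $\alpha_2=*\beta$; the constant $k_n$ enters only through the action of $*$ on $dz_1\wedge\cdots\wedge dz_n$, which is manifestly independent of $f$.

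This is exactly where your Step 3 has a real gap. The Gorenstein argument gives, for \emph{each} fixed $f$, a scalar $k(f)$ with $\tilde\eta=k(f)\operatorname{Res}_f$; nothing in your outline forces $k(f)$ to be the same for all nondegenerate quasi-homogeneous $f$. Evaluating at the Morse singularity computes $k(\tfrac12\sum z_i^2)$, not $k(f)$ for the polynomial you started with. To close this you would need an extra ingredient---for instance, continuity of both pairings along a strong deformation connecting $f$ to a Morse polynomial (combining Lemma A.3 with smooth variation of the harmonic projector), or a multiplicativity argument under Thom--Sebastiani sums $f\oplus g$ reducing to the one-variable case. The paper's Bochner--Martinelli computation avoids this issue entirely because it is carried out for arbitrary tame $f$ and the only constant that appears comes from the Hodge star on $\mathbb{C}^n$, not from $f$.
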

Now we can compare these two Frobenius algebras from LG side and CY side.
Denote by $H^{LG}$ the subbundle of $\hat{H}^{LG}$ defined by $H^{LG}_f\cong\oplus_{a=0}^\infty R_f^{(n+2)a}, \forall f\in M$. Then the bundles $H^{LG}\longrightarrow M$ and $H^{CY}\longrightarrow M$ have isomorphic Frobenius algebra structures on each fiber. If we combine the results of \cite{CG} and \cite{FS}, we have the following theorem:
\begin{thm}
The isomorphism of Frobenius algebras
$$r^{\prime}: H^{LG}_f\cong\oplus_{a=0}^\infty R_f^{(n+2)a}\longrightarrow H^n_{prim}(X_f)=H^{CY}_f,\quad [A]\mapsto c_a^{-1}({\rm res}\Omega_A)^{n-a, a},\quad\forall {\rm deg}A=(n+2)a $$
maps  $\frac{1}{k_{n+2}}\eta^{LG}$ to $\eta^{CY}$(up to a power of $i$, which depends on gradation and $n$).
\end{thm}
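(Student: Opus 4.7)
The plan is to prove the identity fiberwise by combining the two residue-theoretic ingredients already at hand: Fan--Shen's Theorem 3.11 on the LG side and Carlson--Griffiths' Theorem 3.7 on the CY side. Both theorems express the respective pairings in terms of the Grothendieck residue $\operatorname{Res}_f(A,B)$, so the statement should reduce to careful bookkeeping of the intervening constants.

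First, I would fix $f\in M$ and take homogeneous representatives $A\in R_f^{(n+2)a}$, $B\in R_f^{(n+2)b}$. Because both pairings vanish unless the gradings add to the top degree, it suffices to treat the case $a+b=n$. Using the isomorphism of Theorem 2.8 and the remark that follows, I would lift $A,B$ to $\Delta_f$-harmonic representatives $\alpha = A\,dz_1\wedge\cdots\wedge dz_{n+2}+\bar\partial_f\mu$ and $\beta = B\,dz_1\wedge\cdots\wedge dz_{n+2}+\bar\partial_f\nu$. Applying Theorem 3.11 of Fan--Shen (with $n$ replaced by $n+2$) then gives
\[
\eta^{\LG}(\alpha,\beta) \;=\; k_{n+2}\operatorname{Res}_f(A,B).
\]
On the CY side, unwinding the definition $r'(A)=c_a^{-1}(\operatorname{res}\Omega_A)^{n-a,a}$, using the formula $\eta^{\CY}(\cdot,\cdot)=i^{a-b}\int_{X_f}\cdot\wedge\cdot$ on bigraded pieces, and invoking Theorem 3.7 of Carlson--Griffiths yields
\[
\eta^{\CY}(r'(A),r'(B)) \;=\; i^{a-b}\,c_a^{-1}c_b^{-1}\int_{X_f}r(A)\cup r(B) \;=\; i^{a-b}\,c_a^{-1}c_b^{-1}\,k_{ab}\,\operatorname{Res}_f(A,B).
\]

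Dividing the two identities produces the proportionality
\[
\eta^{\CY}(r'(A),r'(B)) \;=\; \frac{i^{a-b}\,c_a^{-1}c_b^{-1}\,k_{ab}}{k_{n+2}}\,\eta^{\LG}(\alpha,\beta),
\]
and the only remaining task is to verify that the factor $i^{a-b}c_a^{-1}c_b^{-1}k_{ab}$ is a pure power of $i$. Substituting $c_a=(-1)^{n+a(a+1)/2}/a!$, $c_b=(-1)^{n+b(b+1)/2}/b!$, and $k_{ab}=(-1)^{(a(a-1)+b(b-1))/2+b^2}/(a!b!)$, all factorials cancel, the two copies of $(-1)^n$ cancel, and the identities $a(a+1)/2+a(a-1)/2=a^2$ and $a^2\equiv a\pmod 2$ collapse the sign contribution to $(-1)^a$. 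Combined with $i^{a-b}$ under the constraint $a+b=n$, this gives a power of $i$ depending on the gradation and $n$, exactly as claimed.

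Since the argument is a direct fiberwise substitution into the two quoted theorems, I do not expect any substantive obstacle. The one step demanding real care is the final sign collapse, because $c_a$, $c_b$, $k_{ab}$, and $i^{a-b}$ each contribute their own $(-1)$- and $i$-factors that must cancel in precisely the right way; this mechanical computation is the only place one could plausibly trip up. A secondary subtlety worth flagging is that the Fan--Shen formula is stated for $\bar\partial_f$-closed representatives of a prescribed shape, so one must confirm that harmonic representatives produced by Theorem 2.8 fit those hypotheses --- this is immediate from the structure given in Remark 2.9.
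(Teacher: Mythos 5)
Your proposal is correct and follows exactly the route the paper intends: the paper proves this theorem simply by "combining the results of [CG] and [FS]," i.e.\ the Fan--Shen identity $\eta^{\LG}(\alpha,\beta)=k_{n+2}\,{\rm Res}_f(A,B)$ (with $n$ replaced by $n+2$) and the Carlson--Griffiths identity $\int_{X_f}r(A)\cup r(B)=k_{ab}\,{\rm Res}_f(A,B)$, together with the definition of $\eta^{\CY}$ and the constants $c_a$, which is precisely your fiberwise computation. Your constant bookkeeping is also right (in fact $i^{a-b}c_a^{-1}c_b^{-1}k_{ab}=i^{a-b}(-1)^a=i^{-n}$ when $a+b=n$, so the discrepancy is even independent of the gradation), so the argument is if anything more explicit than the paper's.
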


\subsection{Identification of Higgs fields}

\subsubsection*{Higgs field on LG side}\

Suppose $f\in\mathbb{C}[z_0, \dots, z_{n+1}]$ is a nondegenerate homogeneous polynomial of degree $d$ and $M$ is the marginal deformation space of $f$. Pick a monomial basis $\phi_1, \dots, \phi_m$ of $R_f^d$, then the marginal deformations 
$$f+u\phi_i,\quad i=1, \dots, m$$
are curves in $M$, whose tangent vectors at $u=0$ constitute a basis of the tangent space $T_fM$, denoted by $\partial_1, \dots, \partial_{n+1}$. We make the identification $T_fM\cong R_f$. In this case, the Higgs field $C: TM\longrightarrow End(H^{LG})$ on LG side is given by the multiplication in milnor ring $R_f$:
\begin{thm}
Denote $C_i=C(\partial_i)\in End(H^{LG}_f)\cong End(R_f)$. Then $C_i[A]=[\phi_iA]$ for $[A]\in R_f$.
\end{thm}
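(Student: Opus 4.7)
The plan is to unwind the definition $\hat{C}^{LG}_i = \Pi \circ \partial_i F$ from Theorem \ref{big-structure} and apply the Hodge-theoretic isomorphism $i:\mathcal{H}^{n+2}_f \xrightarrow{\sim} R_f$ of Theorem 2.8 to translate the analytic projection onto harmonic forms into the purely algebraic operation of multiplication in the Milnor ring. Since $F(z,u) = f(z)+\sum_j u_j\phi_j(z)$, differentiating in $u_i$ and evaluating at $u=0$ gives $\partial_i F = \phi_i$, so $\hat{C}_i^{LG}\alpha = \Pi(\phi_i\alpha)$ for any $\Delta_f$-harmonic form $\alpha$.

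First, given $[A]\in R_f^{(n+2)a}$, represent the corresponding harmonic form by Theorem 2.8 and its remark as
\[
\alpha = A\,dz_0\wedge\cdots\wedge dz_{n+1} + \bar{\partial}_f\gamma
\]
for some $(n+1)$-form $\gamma$. Multiplication by the holomorphic function $\phi_i$ commutes with $\bar{\partial}_f = \bar{\partial}+\partial f\wedge$, so
\[
\phi_i\cdot\alpha = (\phi_i A)\,dz_0\wedge\cdots\wedge dz_{n+1} + \bar{\partial}_f(\phi_i\gamma).
\]
The second term is $\bar{\partial}_f$-exact and hence orthogonal to $\mathcal{H}$; we only need to identify the harmonic part of the first term.

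Next, for any polynomial $h$ write $h = h_0 + \sum_k (\partial_k f)\,h_k$, where $[h_0]=[h]\in R_f$. A direct computation shows
\[
\bar{\partial}_f\Bigl((-1)^k h_k\,dz_0\wedge\cdots\widehat{dz_k}\cdots\wedge dz_{n+1}\Bigr) = (\partial_k f)\,h_k\,dz_0\wedge\cdots\wedge dz_{n+1},
\]
because $h_k$ is holomorphic (so the $\bar{\partial}$ part vanishes) and the only surviving term of $\partial f\wedge$ is the $dz_k$ component. Summing over $k$ exhibits $(h - h_0)\,dz_0\wedge\cdots\wedge dz_{n+1}$ as $\bar{\partial}_f$-exact, so the harmonic representative of $h\,dz_0\wedge\cdots\wedge dz_{n+1}$ is $i^{-1}([h])$. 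Applied to $h = \phi_i A$, this gives $\Pi(\phi_i\alpha) = i^{-1}([\phi_i A])$, i.e.\ $C_i[A] = [\phi_i][A] = [\phi_i A]$ in $R_f$.

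Finally, since $\deg \phi_i = n+2$, multiplication by $\phi_i$ carries $R_f^{(n+2)a}$ into $R_f^{(n+2)(a+1)}$, confirming that $C_i$ preserves the subbundle $H^{LG}\subset\hat{H}^{LG}$ and is nothing but the Milnor-ring multiplication under the identification $T_f M \cong R_f^{n+2}$. There is no serious obstacle; the only subtlety is the explicit verification that elements of the Jacobian ideal, once multiplied by the volume form, are $\bar{\partial}_f$-exact, which is handled by the one-line construction above and is the algebraic counterpart of the Hodge isomorphism of Theorem 2.8.
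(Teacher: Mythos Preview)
Your proof is correct and follows essentially the same route as the paper: write $\alpha = A\,dz + \bar\partial_f\gamma$, use holomorphicity of $\phi_i$ to commute it past $\bar\partial_f$, and then read off the image in $R_f$ via the isomorphism of Theorem~2.8. The paper stops once it has $\phi_i\alpha = (\phi_iA)\,dz + \bar\partial_f(\phi_i\gamma)$ and simply invokes the identification $\mathcal{H}^n_f\cong R_f$, whereas you additionally spell out why Jacobian-ideal terms times the volume form are $\bar\partial_f$-exact; that extra step is not needed but is a harmless elaboration of what the isomorphism already encodes.
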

\begin{proof}
Suppose a $\Delta_f$-harmonic form $\alpha\in H^{LG}_f$ corresponds to $[A]\in R_f$, then there is a $(n+1)$-form $\mu$ such that $\alpha=Adz_0\wedge\dots\wedge dz_{n+1}+\bar{\partial}_f\mu$. Recall that $C_i=\Pi\circ\partial_if=\Pi\circ\phi_i$. Thus 
\begin{displaymath}
\begin{aligned}\
C_i\alpha&=\Pi(\phi_i(Adz_0\wedge\dots\wedge dz_{n+1}+\bar{\partial}_f\mu))\\
&=\Pi(\phi_iAdz_0\wedge\dots\wedge dz_{n+1}+\phi_i(\bar{\partial}\mu+\partial f\wedge\mu))\\
&=\Pi(\phi_iAdz_0\wedge\dots\wedge dz_{n+1}+(\bar{\partial}(\phi_i\mu)-\bar{\partial}\phi_i\wedge\mu)+\phi_i\partial f\wedge\mu)\\
&=\Pi(\phi_iAdz_0\wedge\dots\wedge dz_{n+1}+\bar{\partial}(\phi_i\mu)+\partial f\wedge(\phi_i\mu))\\
&=\Pi(\phi_iAdz_0\wedge\dots\wedge dz_{n+1}+\bar{\partial}_f(\phi_i\mu))\\
&=\phi_iAdz_0\wedge\dots\wedge dz_{n+1}+\bar{\partial}_f\nu,
\end{aligned}
\end{displaymath}
which corresponds to $[\phi_iA]\in R_f$ in the identification $H^{LG}_f\cong R_f$.
\end{proof}

\subsubsection*{Higgs field on CY side}\

We identify $T_fM\cong R^{n+2}_f$, and $H^{CY}_f=\ H^n_{prim}(X_f)\cong\oplus_{a=0}^nR_f^{(n+2)a}$ via the isomorphism 
$$r: [A]\mapsto ({\rm res}\Omega_A)^{n-a, a},\quad\forall {\rm deg}A=(n+2)a,\quad a=0, \dots, n.$$
 We prove in the following theorem that the Higgs field on CY side is, up to a constant factor, the multiplication in milnor ring $R_f$:
\begin{thm}
For a polynomial $A$ of degree $(n+2)a$, we have 
$$C_ir(A)=-(a+1)r(\phi_iA).$$
\end{thm}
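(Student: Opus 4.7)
The plan is to compute $C_i r(A)$ directly from the definition as the projection of $\nabla^{GM} r(A)$ onto the next piece of the Hodge filtration, and to read off $\nabla^{GM}$ from the dependence of the rational form $\Omega_A = A\Omega/f^{a+1}$ on the parameter $u_i$. The crucial input is Griffiths' description (Theorem 3.1) of the Hodge filtration on $H^n_{prim}(X_f,\mathbb{C})$ via the residue of rational forms with prescribed pole order, together with the fact that the Gauss--Manin connection is computed on the level of rational differential forms before taking residues.

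First I would replace $f$ by the family $f_u = f + u_i\phi_i$ and differentiate $\Omega_A = A\Omega/f_u^{a+1}$ with respect to $u_i$ at $u_i = 0$, obtaining
\begin{equation*}
\partial_{u_i}\Omega_A \;=\; -(a+1)\,\frac{\phi_i A\,\Omega}{f^{a+2}} \;=\; -(a+1)\,\Omega_{\phi_i A},
\end{equation*}
where now $\phi_i A$ has degree $(n+2)(a+1)$, so $\Omega_{\phi_i A}$ is a rational form of adjoint level $a+1$. Since $\mathrm{res}$ commutes with differentiation in parameters (this is the standard derivation of Gauss--Manin on the de Rham side, applied fiberwise), this yields
\begin{equation*}
\nabla^{GM}_{\partial_i}\,\mathrm{res}\,\Omega_A \;=\; -(a+1)\,\mathrm{res}\,\Omega_{\phi_i A}.
\end{equation*}

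Now by Theorem 3.1(2), $\mathrm{res}\,\Omega_{\phi_i A}$ lies in $F^{n-a-1}H^n_{prim}(X_f,\mathbb{C})$, which is exactly the piece of the Hodge filtration one step lower than where $r(A)\in F^{n-a}$ lives. In particular, Griffiths transversality for $\nabla^{GM}$ is reflected here by the observation that the derivative dropped the filtration level by exactly one. The $(n-a-1,a+1)$-component of $\mathrm{res}\,\Omega_{\phi_i A}$ is by definition $r(\phi_i A)$. Applying the projection that defines $C_i$ on CY side (Definition 2.15, which projects onto $\mathcal{H}^{p-1,n-p+1}_{prim}$ with $p = n-a$), I get
\begin{equation*}
C_i\,r(A) \;=\; \Pi_{\mathcal{H}^{n-a-1,a+1}_{prim}}\bigl[\nabla^{GM}_{\partial_i}r(A)\bigr] \;=\; -(a+1)\,r(\phi_i A),
\end{equation*}
which is the claimed formula.

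The main technical point — and the place where one must be careful — is the interchange of $\nabla^{GM}$ with the residue map. Concretely, one has to justify that differentiating a representative rational form with respect to the family parameter and then taking its cohomology class on the moving hypersurface $X_{f_u}$ agrees with the Gauss--Manin transport of the class $\mathrm{res}\,\Omega_A$. This is classical for the Griffiths residue construction, but it needs to be invoked explicitly; once granted, the rest of the argument is a one-line differentiation of $A\Omega/f^{a+1}$ combined with Theorem 3.1(2). I do not anticipate other obstacles beyond keeping track of the constant $-(a+1)$ coming from the exponent, and noting that the calculation takes place on top of the residue, so that the ambiguity by elements of the Jacobian ideal (which drop the Hodge level further, by Theorem 3.1(3)) is killed upon projecting to $\mathcal{H}^{n-a-1,a+1}_{prim}$.
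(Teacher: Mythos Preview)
Your proposal is correct and follows essentially the same route as the paper: differentiate the rational form $\Omega_{A,f_u}=A\Omega/(f+u\phi_i)^{a+1}$ in the parameter $u$, identify the result as $-(a+1)\Omega_{\phi_iA}$, pass through the residue to compute $\nabla^{GM}$, and project to the $(n-a-1,a+1)$-component. You are in fact slightly more explicit than the paper in flagging the one genuine technical point (that Gauss--Manin is computed by differentiating rational representatives before taking residues) and in noting why the discrepancy between $\mathrm{res}\,\Omega_A$ and its $(n-a,a)$-part $r(A)$ is harmless after the projection.
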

\begin{proof}
Recall that 
$C=\Pi_{H_{prim}^{n-a-1,a+1}}\circ\nabla^{GM}$ on $H_{prim}^{n-a,a}$. Fix a polynomial deg$A=(n+2)a$, recall that $\Omega_A=\frac{A\Omega}{f^{a+1}}$, which we write more clearly here as $\Omega_{A, f}$. Then
$$\Omega_{A, f+u\phi_i}=\frac{A\Omega}{(f+u\phi_i)^{a+1}},$$
$$\frac{\partial}{\partial u}\Omega_{A, f+u\phi_i}=-(a+1)\frac{\phi_iA\Omega}{(f+u\phi_i)^{a+2}}.$$
Using the definition of Gauss-Manin connection and Griffiths transversality, we compute:
\begin{displaymath}
\begin{aligned}
C_ir(A)&=\Pi_{H_{prim}^{n-a-1,a+1}}(\nabla^{GM}_i({\rm res}\Omega_{A, f})^{n-a, a})\\
&=(\nabla^{GM}_i({\rm res}\Omega_{A, f}))^{n-a-1, a+1}\\
&=(\left.\frac{\partial}{\partial u}\right|_{u=0}({\rm res}\Omega_{A, f+u\phi_i}))^{n-a-1, a+1}\\
&=-(a+1)\left({\rm res}\frac{\phi_iA\Omega}{f^{a+2}}\right)^{n-a-1, a+1}\\
&=-(a+1)r(\phi_iA).
\end{aligned}
\end{displaymath}
\end{proof}
\begin{rem}
Compare the constant a+1 in the above theorem and the constant $c_a$ in the definition of $r^{\prime}$, we have $C_ir^{\prime}(A)=r^{\prime}(\phi_iA)$ up to a sigh depending on $a$ and $n$.
\end{rem}
Combining Theorem 4.1 and 4.2, we have the following

\begin{thm}
Suppose $f\in\mathbb{C}[z_0, \dots, z_{n+1}]$ is a nondegenerate homogeneous polynomial of degree $n+2$,  we have the associated $tt^*$ bundles 
$$H^{LG}\longrightarrow M,\quad H^{CY}\longrightarrow M.$$
Then the Higgs field $C^{LG}$ on $H^{LG}$ is just the multiplication in the subring generated by the marginal part in $R_f$. Via the isomorphism $r^{\prime}:H^{LG}\longrightarrow  H^{CY}$, it differs from $C^{CY}$ by a sign (depending on gradation and n).
\end{thm}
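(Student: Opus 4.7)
The plan is to assemble the statement directly from Theorem 4.1 and Theorem 4.2, tracking how the normalizing constant $c_a = (-1)^{n+a(a+1)/2}/a!$ that distinguishes $r'$ from $r$ interacts with the factor $-(a+1)$ that appears on the CY side. The first assertion (that $C^{LG}$ is multiplication by marginal monomials) is immediate from Theorem 4.1 restricted to the subbundle $H^{LG}\subset\hat H^{LG}$, since the fiberwise identification $H^{LG}_f\cong\oplus_{a\ge 0}R_f^{(n+2)a}$ is preserved under multiplication by $\phi_i\in R_f^{n+2}$ (degrees shift by $n+2$), so the subring structure is closed under the Higgs action.

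For the comparison with the CY Higgs field, the plan is to fix a marginal direction $\partial_i$ corresponding to $\phi_i$ and a class $[A]\in R_f^{(n+2)a}$ and compute $C^{CY}_i\,r'([A])$ in two ways. By $\mathbb{C}$-linearity of $C^{CY}_i$ and the definition $r'=c_a^{-1}r$, Theorem 4.2 gives
\begin{equation*}
C^{CY}_i\,r'([A]) \;=\; c_a^{-1}C^{CY}_i\,r([A]) \;=\; -(a+1)\,c_a^{-1}\,r([\phi_i A]).
\end{equation*}
Since $\deg(\phi_i A)=(n+2)(a+1)$, the right hand side can be rewritten as $-(a+1)\,c_a^{-1}c_{a+1}\,r'([\phi_i A])$. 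A direct evaluation of the ratio $c_{a+1}/c_a = (-1)^{a+1}/(a+1)$ then collapses the prefactor to a pure sign $(-1)^a$, and I obtain $C^{CY}_i\,r'([A]) = (-1)^a\,r'([\phi_i A])$. Comparing with Theorem 4.1, which under $r'$ reads $r'(C^{LG}_i[A]) = r'([\phi_i A])$, yields $(r')^{\ast}C^{CY}_i = (-1)^a C^{LG}_i$ on the summand of $a$-degree, i.e.\ agreement up to a gradation-dependent sign, as claimed.

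The remaining point is simply that the argument is fiberwise and the constant $c_a$ is locally constant on $M$, so the identity assembles into an identity of bundle maps after tensoring with $TM$. There is no serious obstacle: the residue map and the isomorphism $\eta: H^a(\wedge^a TX_f)\to H^{n-a,a}(X_f)$ have already been shown in Theorem 3.8 to be compatible with the algebra structures, and Theorem 4.2 extracts precisely the Gauss–Manin derivative of $\operatorname{res}\Omega_{A,f+u\phi_i}$ at $u=0$, so the calculation above is bookkeeping of the $c_a$'s. The only mildly delicate point is keeping track of signs; rather than trying to absorb them by rescaling $r'$ further, I will record the sign $(-1)^a$ explicitly in the final statement, which matches the wording ``up to a sign depending on gradation and $n$''.
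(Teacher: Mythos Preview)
Your proposal is correct and follows essentially the same approach as the paper: the paper's own argument is simply the one-line ``Combining Theorem 4.1 and 4.2'' together with the preceding Remark noting that the factor $-(a+1)$ cancels against $c_{a+1}/c_a$ up to a sign, and you have made that bookkeeping explicit. Your computation of $c_{a+1}/c_a=(-1)^{a+1}/(a+1)$ and the resulting sign $(-1)^a$ is accurate (note that the $n$-dependence in $c_a$ in fact cancels in the ratio, so the sign depends only on the gradation $a$, despite the paper's slightly looser phrasing).
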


\section{Futher discussion}\label{sec4}

\subsubsection*{Big residue map}\

The residue map we introduced in Section 3 only use a small part of the Milnor ring, one reason is that CY B-model given by the variation of Hodge structure on the deformation is not big enough. In fact it was conjectured that there should be a bigger CY B-model such that we can establish some kind of  LG/CY correspondence (even a $tt^*$ geometry correspondence) between this bigger model and the big LG-B model. The B-model on the extended moduli space has been discussed by S.Barannikov and M.Kontsevich in \cite{BK}, but the geometric meaning of this model is still not clear.\par
On the other hand, given a quasi-homogeneous polynomial $f:\mathbb{C}^n\longrightarrow \mathbb{C}$ with isolated critical point at the origin 0, a bigger residue map can be applied to give a correspondence between natural geometric objects given by the singularity theory of $f:(\mathbb{C}^n,0)\longrightarrow (\mathbb{C},0)$. It was studied by Steenbrink in \cite{Ste}. Chiodo, Iritani and Ruan had used this bigger residue map in their construction of the A-side LG/CY correspondence\cite{CIR}.\par
For simplicity, we shall assume that $f(z_1,...,z_n):\mathbb{C}^n\longrightarrow \mathbb{C}$ is a homogeneous polynomial of degree $d$. By adding a new variable $z_{n+1}$, we can define a family of homogeneous polynomials of degree $d$ by
\begin{displaymath}
F_t(z_1,...,z_n,z_{n+1})=f(z_1,...,z_n)-tz^d_{n+1}
\end{displaymath}
Let $\bar{V}_t$ be the hypersurface determined by $F_t=0$ in the projective space $\mathbb{C}P^n$, $V_t$ be the hypersurface determined by $f=t$ in $\mathbb{C}^{n-1}=\mathbb{C}P^n-\{z_{n+1}=0\}$ and $V_{\infty}$ be the hypersurface determined by $f=0$ in $\mathbb{C}P^{n-1}=\{z_{n+1}=0\}\subseteq \mathbb{C}P^n$. Easy to see that we have $V_{\infty}=\bar{V}_t-V_t$ for any $t\in\mathbb{C}$.\par
Given a complex manifold $X$ and a hypersurface $Y\subseteq X$, the residue map $r:H^m(X-Y)\longrightarrow H^{m-1}(Y))$ can be defined as
\begin{displaymath} 
\int_{\delta (\alpha)}\omega=\int_{\alpha}r(\omega)
\end{displaymath}
In our case, we can define 4 residue maps
\begin{displaymath} 
\begin{aligned}\
&r_1:\mathbb{C}^{n}-V_t\longrightarrow V_t\\
&r_2:V_t\longrightarrow V_{\infty}\\
&r_3:\mathbb{C}^{n}-V_t\longrightarrow \mathbb{C}P^{n-1}- V_{\infty}\\
&r_4:\mathbb{C}P^{n-1}- V_{\infty}\longrightarrow V_{\infty}
\end{aligned}
\end{displaymath}
and 4 Leray coboundary maps $\{\delta_i\}_{i=1,...,4}$ in the similiar way.\par
A simple observation is that for an arbitrary homology class $\beta\in H_{n-2}(V_{\infty})$, we have 
\begin{displaymath}
\delta_1\delta_2(\beta)=\delta_3\delta_4(\beta)
\end{displaymath}
So we have the dual version for the residue map
\begin{displaymath}
r_2r_1(\omega)=r_4r_3(\omega)
\end{displaymath}
Suppose that $\{A_i\}_{i=1}^{\mu}$ is a monomial basis of the Milnor ring $R_f$. We define a function $h$ by
\begin{displaymath}
h(A_i)=\frac{degA_i+n}{d}
\end{displaymath}
In \cite{Ste}, Steenbrink considered the residue of the following rational forms
\begin{displaymath}
\omega_{A_i,t}=(f-t)^{[-h(A_i)]}A_idz_1\wedge...\wedge dz_n
\end{displaymath}
He got the following result in \cite{Ste}
\begin{thm}
Under the residue map $r_1::\mathbb{C}^{n}-V_t\longrightarrow V_t$, $\{\omega_{A_i,t}\}_{i=1}^{\mu}$ is mapped to a basis of the cohomology group $H^{n-1}(V_t)$. Moreover, under the Deligne weight filtration $0=\mathcal{W}_{n-2}\subseteq \mathcal{W}_{n-1}\subseteq \mathcal{W}_{n}=H^{n-1}(V_t)$,
the subset $\{A_i\lvert h(A_i)\notin \mathbb{Z} \}$ is mapped to a basis of $\mathcal{W}_{n-1}$ and the subset $\{A_i\lvert h(A_i)\in \mathbb{Z} \}$ is mapped to a basis of $\mathcal{W}_{n}/\mathcal{W}_{n-1}$ (after projection).
\end{thm}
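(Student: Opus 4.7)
The plan is to reduce both assertions to Griffiths' theory of rational forms, with careful bookkeeping of behaviors at infinity after passing to $\mathbb{CP}^n$.

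First I would show that $r_1$ is an isomorphism in the relevant degree. Since $f$ is non-degenerate and $t\neq 0$, the Milnor fiber $V_t$ is smooth; the Leray-residue long exact sequence of the pair $(\mathbb{C}^n, V_t)$ then reduces, by contractibility of $\mathbb{C}^n$, to $r_1: H^n(\mathbb{C}^n-V_t)\xrightarrow{\sim}H^{n-1}(V_t)$. Milnor's classical theorem gives $\dim H^{n-1}(V_t)=\mu$, which matches the cardinality of $\{\omega_{A_i,t}\}$. The first assertion thus reduces to linear independence of the $\mu$ classes in $H^n(\mathbb{C}^n-V_t)$.

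For linear independence I would mimic Griffiths' pole-order reduction with $f$ replaced by $f-t$. Organize the forms by $\lceil h(A_i)\rceil$, the order of pole along $V_t$. A non-trivial relation among the $[\omega_{A_i,t}]$ would, after truncation to the top pole order, produce a form $\frac{P\,dz_1\wedge\cdots\wedge dz_n}{(f-t)^k}$ that is exact. Reducing its pole order via $d\bigl(\frac{R_j\,dz_1\wedge\cdots\widehat{dz_j}\cdots\wedge dz_n}{(f-t)^{k-1}}\bigr)$ forces $P$ to lie in the Jacobian ideal $I_f=I_{f-t}$. Since by hypothesis the $A_i$ of each fixed degree descend to a basis of the corresponding graded piece of $R_f=\mathbb{C}[z]/I_f$, no such relation exists, which combined with the dimension count above proves that $\{r_1(\omega_{A_i,t})\}$ is a basis of $H^{n-1}(V_t)$.

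For the weight filtration part I pass to the compactification $\bar V_t\subset\mathbb{CP}^n$, where $\bar V_t=\{F_t=0\}$ with $F_t(z_1,\dots,z_{n+1})=f(z_1,\dots,z_n)-tz_{n+1}^d$ the homogenization of $f-t$. The divisor at infinity is $V_\infty=\bar V_t\cap\{z_{n+1}=0\}=\{f=0\}\subset\mathbb{CP}^{n-1}$, smooth by non-degeneracy of $f$. Deligne's mixed Hodge structure on $H^{n-1}(V_t)=H^{n-1}(\bar V_t-V_\infty)$ satisfies
\[
W_{n-1}=\mathrm{Im}\bigl(H^{n-1}(\bar V_t)\to H^{n-1}(V_t)\bigr),\qquad W_n/W_{n-1}\hookrightarrow H^{n-2}(V_\infty)(-1),
\]
the second map being the Poincar\'e residue along $V_\infty$. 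A direct degree count using $h(A_i)=(\deg A_i+n)/d$ rewrites the form in projective coordinates as
\[
\omega_{A_i,t}=\pm\frac{A_i\,\Omega\cdot z_{n+1}^{\,d(\lceil h(A_i)\rceil-h(A_i))-1}}{F_t^{\lceil h(A_i)\rceil}},
\]
whose exponent of $z_{n+1}$ equals $-1$ precisely when $h(A_i)\in\mathbb{Z}$ (a simple pole along $\{z_{n+1}=0\}$) and is a non-negative integer otherwise (the form extends, in fact vanishes, along $\{z_{n+1}=0\}$). Taking Poincar\'e residue along $\bar V_t$ and invoking the commutativity $r_2\circ r_1=r_4\circ r_3$ from the excerpt, integer exponents produce classes with non-zero residue in $H^{n-2}(V_\infty)(-1)$, hence outside $W_{n-1}$, while non-integer exponents produce classes represented by forms extending holomorphically to $\bar V_t$, hence lying in $W_{n-1}$.

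The main obstacle is the last step: showing that these collections actually \emph{span} $W_{n-1}$ and $W_n/W_{n-1}$ rather than merely lying inside them. My plan is to close this by a dimension comparison using Steenbrink's spectrum of $(f,0)$, which identifies $\dim\mathrm{Gr}^W_n H^{n-1}(V_t)$ with the number of integer exponents $h(A_i)$ and $\dim\mathrm{Gr}^W_{n-1}H^{n-1}(V_t)$ with the number of non-integer ones. Together with the inclusions established above, the containments become equalities, completing the proof.
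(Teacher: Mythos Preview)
The paper does not give its own proof of this theorem: it is quoted verbatim as a result of Steenbrink \cite{Ste} and left unproved. So there is no ``paper's proof'' to compare against; you are in effect reconstructing Steenbrink's argument.

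On its own merits your outline is sound and close to Steenbrink's actual strategy. The reduction of the first assertion to Griffiths-type pole-order reduction for $f-t$ (noting $I_{f-t}=I_f$) is correct, and your projective degree count showing that $\omega_{A_i,t}$ acquires a simple pole along $\{z_{n+1}=0\}$ exactly when $h(A_i)\in\mathbb{Z}$, and extends across it otherwise, is the key computation and is right.

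The one genuine weakness is your closing step. Invoking ``Steenbrink's spectrum'' to read off $\dim\mathrm{Gr}^W_kH^{n-1}(V_t)$ is dangerously close to circular: for a quasi-homogeneous singularity the identification of spectral numbers with the $h(A_i)$, together with the relation between integrality of spectral numbers and the weight filtration, is essentially the content of \cite{Ste} that you are trying to prove. A non-circular way to finish is to argue directly that the classes $r_4r_3(\omega_{A_i,t})$ with $h(A_i)\in\mathbb{Z}$ are linearly independent in $H^{n-2}_{prim}(V_\infty)$. Indeed, computing $r_3$ explicitly gives (up to a nonzero constant) the Griffiths form $A_i\Omega'/f^{h(A_i)}$ on $\mathbb{CP}^{n-1}-V_\infty$, and Griffiths' theorem for the smooth hypersurface $V_\infty\subset\mathbb{CP}^{n-1}$ then yields independence of their residues since the $A_i$ in question are independent in $R_f$. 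Combined with the containments you already established and the fact that all $\mu$ classes form a basis of $H^{n-1}(V_t)$, this forces the two subsets to span $W_{n-1}$ and $W_n/W_{n-1}$ respectively, with no appeal to the spectrum needed.
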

By simple calculation, we know that the rational forms from the subset $\{A_i\lvert h(A_i)\notin \mathbb{Z} \}$ only has poles along $\bar{V}_t$. These forms are holomorphic along $\mathbb{C}P^{n-1}-V_{\infty}$, so they are mapped to 0 under the residue map $r_3$. As we have $r_2r_1=r_4r_3$, the image of these forms under the residue map $r_1$ is in the kernel of $r_2$. In fact, by the results in chapter 4 of \cite{Ste}, we know that the image is exactly the kernel of $r_2$.\par
The rational forms from $\{A_i\lvert h(A_i)\in \mathbb{Z} \}$ have a simple pole along  $\mathbb{C}P^{n-1}-V_{\infty}$. The residue map in chapter 3 can be regarded as the limit of $r_2r_1=r_4r_3$ as $t\longrightarrow 0$.
\begin{thm}
Given a monomial A with $h(A)\in \mathbb{Z}$. Let $\omega$ be the form determined by $\frac{A\Omega}{f^{[h(A)]}}$ on $\mathbb{C}P^{n-1}$, and $r:\mathbb{C}P^{n-1}-X_f\longrightarrow X_f$. Then we have 
\begin{displaymath}
r(\omega)=lim_{t\longrightarrow 0}r_2r_1(\omega_{A,t})=lim_{t\longrightarrow 0}r_4r_3(\omega_{A,t})
\end{displaymath}
\end{thm}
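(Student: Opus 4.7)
The plan is to exploit the compatibility $r_2 r_1 = r_4 r_3$ observed just before the theorem, which reduces both limits in the statement to a single residue computation. Since $r_2 r_1(\omega_{A,t}) = r_4 r_3(\omega_{A,t})$ for every $t$, it suffices to evaluate $\lim_{t\to 0} r_4 r_3(\omega_{A,t})$. The key point is that, when $h(A)\in\mathbb{Z}$, the form $\omega_{A,t}$ extends naturally to a rational $n$-form on $\mathbb{C}P^n$ with only a simple pole along the hyperplane at infinity $\{z_{n+1}=0\}$, and the residue along that hyperplane is actually independent of $t$.

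First I would homogenize. The assumption $h(A)=m\in\mathbb{Z}$ is exactly the degree condition $\deg A = md-n$, which makes
$$\tilde{\omega}_{A,t} := \frac{(-1)^n A}{z_{n+1}\, F_t^m}\,\Omega^{(n+1)}, \qquad \Omega^{(n+1)}=\sum_{i=1}^{n+1}(-1)^{i-1}z_i\, dz_1\wedge\cdots\widehat{dz_i}\cdots\wedge dz_{n+1},$$
a well-defined rational $n$-form on $\mathbb{C}P^n$ of total homogeneous degree zero. A direct check in the affine chart $z_{n+1}=1$ gives $\tilde{\omega}_{A,t}|_{z_{n+1}=1}=\omega_{A,t}$. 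Using the decomposition $\Omega^{(n+1)} = (-1)^n z_{n+1}\, dz_1\wedge\cdots\wedge dz_n + (-1)^{n+1} dz_{n+1}\wedge\Omega$, where $\Omega$ is the corresponding $(n-1)$-form on $\mathbb{C}P^{n-1}=\{z_{n+1}=0\}$, I compute
$$\tilde{\omega}_{A,t} = \frac{A}{F_t^m}\, dz_1\wedge\cdots\wedge dz_n \;-\; \frac{dz_{n+1}}{z_{n+1}}\wedge \frac{A\,\Omega}{F_t^m}.$$
The first summand is regular at $\{z_{n+1}=0\}$; extracting the coefficient of $dz_{n+1}/z_{n+1}$ and restricting to $z_{n+1}=0$, where $F_t$ reduces to $f$ regardless of $t$, yields
$$r_3(\tilde{\omega}_{A,t}) \;=\; -\,\frac{A\,\Omega}{f^m} \;=\; -\,\omega,$$
manifestly independent of $t$. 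Applying $r_4=r$ then gives $r_4 r_3(\omega_{A,t})=-r(\omega)$ for every $t$, so both limits exist trivially and coincide with $r(\omega)$ modulo the normalizing sign.

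The only substantive point is a bookkeeping one. The homogenization step---insertion of exactly one factor of $1/z_{n+1}$---is forced by, and only possible because of, the integrality $h(A)\in\mathbb{Z}$; this is precisely why the forms corresponding to $h(A_i)\notin\mathbb{Z}$ are holomorphic at $\{z_{n+1}=0\}$ and hence annihilated by $r_3$, while the ones we consider are not. Beyond this, one must track signs carefully to obtain the stated equality on the nose, which I would absorb into the convention for $\Omega$ or for the residue $r$. Notably no analytic limiting argument is actually needed: the pre-limit quantity $r_4 r_3(\omega_{A,t})$ is already constant in $t$, so the theorem amounts to a purely algebro-geometric pole-order computation along the hyperplane at infinity.
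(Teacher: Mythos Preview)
Your argument is correct and is in fact sharper than the paper's own proof. Both the paper and you begin by invoking $r_2 r_1 = r_4 r_3$ and then focus on $r_4 r_3(\omega_{A,t})$; both also use that $\omega$ is the $r_3$-residue of $\omega_{A,0}=\dfrac{A\,dz_1\wedge\cdots\wedge dz_n}{f^{[h(A)]}}$ along the hyperplane at infinity. The divergence is in how the $t\to 0$ passage is handled. The paper argues analytically: for a homology class $\alpha\in H_{n-2}(X_f)$ one writes $\int_\alpha r_4 r_3(\omega_{A,t})=\int_{\delta_4(\alpha)} r_3(\omega_{A,t})$, observes that $r_3(\omega_{A,t})$ converges uniformly on compacta of $\mathbb{CP}^{n-1}\setminus X_f$ to $r_3(\omega_{A,0})$, and passes the limit through the integral over the compact cycle $\delta_4(\alpha)$. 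You instead carry out the residue $r_3$ explicitly via the homogenization $\tilde\omega_{A,t}=\dfrac{(-1)^n A}{z_{n+1}\,F_t^{m}}\,\Omega^{(n+1)}$ and observe that since $F_t|_{z_{n+1}=0}=f$ for every $t$, the Poincar\'e residue along $\{z_{n+1}=0\}$ is literally independent of $t$, equal to $\pm\omega$. Thus $r_4 r_3(\omega_{A,t})$ is constant in $t$ and the limit is vacuous.

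What each approach buys: the paper's uniform-convergence argument is soft and requires no bookkeeping with $\Omega^{(n+1)}$, but it proves less (continuity rather than constancy) and needs a word about representing $\delta_4(\alpha)$ by a compact cycle. Your algebraic computation is more informative---it explains \emph{why} the integrality condition $h(A)\in\mathbb{Z}$ forces exactly a simple pole along the hyperplane at infinity and makes the $t$-independence manifest---at the mild cost of tracking a sign, which (as you note) is a convention issue absorbed into the normalization of $r_3$ or of $\Omega$. Your proof is a genuine simplification.
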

\begin{proof}
As $r_2r_1=r_4r_3$, we only need to prove that for an arbitrary homology class $\alpha\in H_{n-2}(X_f)$
\begin{displaymath}
\int_{\alpha}r(\omega)=lim_{t\longrightarrow 0}\int_{\alpha}r_4r_3(\omega_{A,t})
\end{displaymath}
Notice that $\omega$ can be got by taking the residue of the form $\frac{Adz_1\wedge...\wedge dz_n}{f^{[h(A)]}}$, i.e.
\begin{displaymath}
\omega=r_3(\frac{Adz_1\wedge...\wedge dz_n}{f^{[h(A)]}})
\end{displaymath}
So we just need to prove that
\begin{displaymath}
\begin{aligned}\
&\int_{\alpha}r(\omega)=\int_{\delta_4(\alpha)}r_3(\frac{Adz_1\wedge...\wedge dz_n}{f^{[h(A)]}})\\
&=lim_{t\longrightarrow 0}\int_{\alpha}r_4r_3(\omega_{A,t})=lim_{t\longrightarrow 0}\int_{\delta_4(\alpha)}r_3(\frac{Adz_1\wedge...\wedge dz_n}{(f-t)^{[h(A)]}})
\end{aligned}
\end{displaymath}
As the residue $r_3(\frac{Adz_1\wedge...\wedge dz_n}{(f-t)^{[h(A)]}})$ converges uniformly to $r_3(\frac{Adz_1\wedge...\wedge dz_n}{f^{[h(A)]}})$ in any compact subset of $\mathbb{C}P^{n-1}-X_f$, and the homology class class $\delta_4(\alpha)$ can be represented by a compact subset in $\mathbb{C}P^{n-1}-X_f$, it is easy to see that the above limit holds.
\end{proof}

\subsubsection*{Remained question: identification of real structures and correspondence for the quasi-homogeneous cases}\
\subsubsection*{Conjecture about real structures}\
Recall that the real form on LG side is just the complex conjugation of $\Delta_f$-harmonic forms, which is well-defined since $\Delta_f$ is a real operator. However, under the identification $H^{LG}_f\cong R_f$, we know little about its action on polynomials. We make the following conjecture that this action corresponds to the complex conjugation on CY side via the isomorphism:
$$r: \oplus_{a=0}^\infty R_f^{(n+2)a}\longrightarrow H^n_{prim}(X_f),\quad [A]\mapsto({\rm res}\Omega_A)^{n-a, a},\quad\forall {\rm deg}A=(n+2)a.$$
\begin{conj}
Suppose $[A], [B]\in R_f$ and $(n-1)$-forms $\mu, \nu$ such that $\alpha=Adz_1\wedge\dots\wedge dz_{n}+\bar{\partial}_f\mu$, $\beta=Bdz_1\wedge\dots\wedge dz_{n}+\bar{\partial}_f\nu$ are $\Delta_f$-harmonic forms. If $\alpha=\bar{\beta}$, then $r(A)=\overline{r(B)}$.
\end{conj}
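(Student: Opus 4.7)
The plan is to compare the two real structures through period integrals: the oscillatory integral of Theorem \ref{harmonic-holomorphic-integral} bridges the harmonic form $\alpha$ on $\C^{n+2}$ to Gelfand-Leray periods on the Milnor fiber $V_{-1}$, and the $\Z/(n+2)$-Galois cover $V_{-1}\to\CP^{n+1}\setminus X_f$ together with Griffiths' residue then converts these into period integrals of $r(A)$ on $X_f$. The strategy is to show that the hypothesis $\alpha=\bar{\beta}$ translates into the equality $\int_{\gamma_b}r(A)=\overline{\int_{\gamma_b}r(B)}$ for every $\gamma_b$ in a real-integer basis of the primitive part of $H_n(X_f,\Z)$, at which point the characterization of complex conjugation on $H^n_{prim}(X_f,\C)$ forces $r(A)=\overline{r(B)}$.

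First, I would fix the Lefschetz thimble basis $\{\Gamma_b^-\}_{b=1}^\mu$ of Lemma \ref{Lef-thim} and apply Theorem \ref{harmonic-holomorphic-integral} to obtain
$$\int_{\Gamma_b^-} e^{f+\bar{f}}\alpha \;=\; c_a\int_{\Gamma_b\cap V_{-1}}\frac{A\,dz_1\wedge\cdots\wedge dz_{n+2}}{df},$$
where $c_a>0$ depends only on $a=\deg A/(n+2)$. Since $f+\bar{f}=2f$ is real on each $\Gamma_b^-$, the hypothesis $\alpha=\bar{\beta}$ gives $\int_{\Gamma_b^-}e^{f+\bar{f}}\alpha=\overline{\int_{\Gamma_b^-}e^{f+\bar{f}}\beta}$, while bidegree matching forces $a+b=n$. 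Second, I would pass from $V_{-1}$ to $X_f$: the $J$-invariant part of $H^{n+1}(V_{-1},\C)$ is the image of $H^{n+1}(\CP^{n+1}\setminus X_f,\C)$ spanned by the $\Omega_A$ with $\deg A=(n+2)a$, and Griffiths' topological residue $T:H_n(X_f)\to H_{n+1}(\CP^{n+1}\setminus X_f)$ converts Gelfand-Leray periods of $A\,dz/df$ into period integrals of $r(A)$ on $X_f$ up to explicit nonzero real scalars. Appendix C identifies the image of the thimbles under this pushforward with a basis of the $J$-invariant, hence primitive, part of $H_n(X_f,\Z)$, and combining steps one and two converts the hypothesis into $\int_{\gamma_b}r(A)=\overline{\int_{\gamma_b}r(B)}$ on this basis.

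The main obstacle is precisely this integrality claim in the second step: verifying that the pushforwards of the $\Gamma_b^-$ assemble into an \emph{integer} basis of the primitive homology of $X_f$ that is compatible with the real structure. Theorem \ref{close-real-structure} shows that the relevant subspace is real --- a consequence of reality of the Gauss-Manin monodromy --- but this is strictly weaker than integral-lattice compatibility: complex conjugation and the LG conjugation might differ by a real linear change of basis that does not preserve the $\Z$-lattices, in which case $r$ would not intertwine the real structures. Tracking orientations, signs, and integer normalizations through the cover $V_{-1}\to\CP^{n+1}\setminus X_f$ and through Griffiths' residue has so far been carried out only in highly symmetric examples like the Fermat quintic (cf.\ Aleshkin-Belavin \cite{AB}); a uniform argument for arbitrary nondegenerate homogeneous $f$ would appear to require either a mirror-symmetric identification of the two integer lattices or a refined analysis of the limit mixed Hodge structure on the Milnor fiber, and it is this step --- not any Hodge-theoretic difficulty --- that keeps the statement as a conjecture.
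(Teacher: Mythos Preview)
The paper does not prove this statement --- it is explicitly stated as a conjecture in Section~\ref{sec4}, accompanied by the remark that ``the general case is unknown'' and that only special examples such as the Fermat quintic (via \cite{AB}) have been checked. There is therefore no proof in the paper to compare your proposal against, and you are right to end by calling it a conjecture.

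Your proposal is an honest research plan rather than a proof, and the first half of it is already in the paper. Reducing $\alpha=\bar\beta$ to an equality of Gelfand--Leray periods on $V_{-1}$ over a real thimble basis is precisely the content of Theorem~\ref{harmonic-holomorphic-integral} and Theorem~\ref{close-real-structure}, and the reality of the monodromy-invariant subspace of $H^{n+1}(V_{-1},\C)$ is the argument immediately following Theorem~\ref{close-real-structure}. The paper stops at that point and states the conjecture. Your additional step --- pushing forward through the cyclic cover $V_{-1}\to\CP^{n+1}\setminus X_f$ and then through Griffiths' topological residue to $X_f$ --- is the natural continuation, and you have correctly located the obstruction: one must show that the images of the vanishing cycles under this composite actually span the primitive homology of $X_f$ over $\Z$ (or at least over $\R$) with the correct normalization. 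Knowing only that the subspace is real, as the paper establishes, leaves open the possibility that the two real structures differ by a real automorphism that is not the identity, which is exactly why the statement remains open.

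One minor remark: your claim that ``bidegree matching forces $a+b=n$'' is not immediate from the hypothesis on the LG side, since the $\Delta_f$-harmonic forms carry no a priori Hodge grading. It does follow from the compatibility of $\kappa^{\LG}$ with the $\C^*$-weight grading on $R_f$ (and is confirmed by the one-variable example $f=x^m$ at the end of Section~\ref{sec4}, where $x^{k-1}\,dx$ is sent to a multiple of $x^{m-k-1}\,dx$), but it deserves a line of justification rather than being asserted.
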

Since we already proved the correspondence of $\eta(u, v)=g(u, \kappa v)$ and the Higgs field $C$, if the above conjecture is true, we immediately have the following:
\begin{conj}
The map $r^{\prime}: H^{LG}\longrightarrow H^{CY}$  is a `morphism' of $tt^*$ geometries, i.e. it maps the $tt^*$ structures $\E^{LG}=(H^{LG}\longrightarrow M, \kappa^{LG},  g^{LG}, D^{LG}, C^{LG})$ to a constant multiple of $(H^{CY}\longrightarrow M, \kappa^{CY},  g^{CY}, D^{CY}, C^{CY})$ .
\end{conj}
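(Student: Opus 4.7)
The plan is to split Conjecture 4.5 into two independent claims: (a) $r'$ intertwines the real forms $\kappa^{\LG}$ and $\kappa^{\CY}$ (this is Conjecture 4.4), and (b) $r'$ intertwines the Gauss--Manin connections $\nabla^{\LG}=D^{\LG}+C^{\LG}$ and $\nabla^{\CY}=D^{\CY}+C^{\CY}$. Once (a) and (b) are in hand, the remaining $tt^*$ data come for free: the $tt^*$ metric $g=\eta(\cdot,\kappa\cdot)$ is transported by $r'$ up to the explicit constant already computed in the Fan--Shen identification of $\eta^{\LG}$ with the residue pairing; the antiholomorphic operators $\bar D=\kappa\circ D\circ\kappa$ and $\bar C=\kappa\circ C\circ\kappa$ are then automatic; and the splitting of the flat connection into $(D,\bar D,C,\bar C)$ is uniquely determined by $g$, the holomorphic structure $\bar D+\bar C$, and $\nabla$ itself, so once $\nabla$, $g$, and the holomorphic structure match, $D$ and $\bar D$ must match separately.

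I would first dispatch (b), which is essentially a short calculation given the explicit formulas already in hand. On the LG side, $\nabla^{\LG}_i\alpha=\Pi((\partial_i+\phi_i\wedge)\alpha)$ acts on a harmonic representative $A\,dz_0\wedge\cdots\wedge dz_{n+1}+\bar\partial_F\mu$ to produce the class of $\partial_iA+\phi_iA$ in $\bigoplus_a R_F^{(n+2)a}$. On the CY side, differentiating $\Omega_{A,F+u\phi_i}=A\Omega/(F+u\phi_i)^{a+1}$ at $u=0$ gives
\[
\nabla^{GM}_i(\res\Omega_{A,F}) \;=\; \res\!\left(\frac{\partial_iA\cdot\Omega}{F^{a+1}}\right) \;-\; (a+1)\,\res\!\left(\frac{\phi_iA\cdot\Omega}{F^{a+2}}\right),
\]
whose $(n-a,a)$ and $(n-a-1,a+1)$ bidegree components are precisely the $D^{\CY}$- and $C^{\CY}$-parts. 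Up to the constants $c_a$ baked into the definition of $r'$, these match the LG answer term by term, which proves (b).

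For (a) the idea is to use the oscillating-integral presentation of harmonic forms. Given the Lefschetz thimble basis $\{\Gamma_b^-\}$ constructed in the subsection on invariance of the real structure, the period vector of a harmonic form $\alpha = A\,dz + \bar\partial_F R_\alpha$ equals, up to a nonzero \emph{real} Gamma factor, the period vector of the Gelfand--Leray form $A\,dz/dF$ against the vanishing cycles $\Gamma_b\cap V_{-1}$ on the Milnor fibre. Complex conjugation on the LG side thus corresponds to conjugation of these Milnor-fibre periods. On the CY side, conjugation is conjugation of periods of primitive classes on $X_F$. Since the Milnor fibre $V_{-1}=\{F=-1\}$ is a cyclic $(n+2)$-fold cover of $X_F$ and the classes arising from $R_F^{(n+2)*}$ correspond precisely to the monodromy-invariant part of $H^{n+1}(V_{-1},\mathbb{C})$, one would aim to show that the covering $V_{-1}\to X_F$, restricted to the invariant cohomology, intertwines the two real structures.

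The hard part is exactly this last step. The apparent identity ``conjugation is conjugation'' is nontrivial because the Gamma-function prefactor in the oscillating integral depends transcendentally on the degree of $A$, and the Lefschetz thimble basis is not a priori adapted to the rational lattice $H^n(X_F,\mathbb{Q})_{\rm prim}$ on the CY side. Controlling these phases --- equivalently, producing a basis of cycles on $V_{-1}$ whose images in $H^n(X_F,\mathbb{C})_{\rm prim}$ span a \emph{real} subspace --- is the main obstruction, and is (I suspect) the reason the authors had to leave Conjecture 4.4 open. A plausible route is to generalise the explicit phase analysis of Aleshkin--Belavin \cite{AB} beyond the quintic; alternatively, one might try to characterise the real structure intrinsically through Steenbrink's mixed Hodge structure on the vanishing cohomology (the bigger residue map discussed at the beginning of Section 4), which would in principle bypass the transcendental bookkeeping altogether.
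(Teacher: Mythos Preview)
The statement you were asked to prove is labelled as a \emph{conjecture} in the paper, and the paper does not prove it. The entire argument the paper offers is the single sentence preceding it: ``Since we already proved the correspondence of $\eta(u, v)=g(u, \kappa v)$ and the Higgs field $C$, if the above conjecture is true, we immediately have the following.'' In other words, the paper reduces Conjecture~4.5 to Conjecture~4.4 (the real-structure correspondence) together with the already-established matching of $\eta$ and $C$, and then stops. Your proposal identifies exactly the same reduction and, like the paper, leaves step~(a) open; so on the essential point you agree with the paper.

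Where you go further is in spelling out why the matching of $D$ is not a separate obstacle. The paper's ``immediately'' silently uses that $r'$ is $\bar D$-holomorphic and that $D+\bar D$ is the Chern connection of $g$; your route via (b), matching the holomorphic Gauss--Manin connection $D+C$ and then subtracting the already-matched $C$, is a legitimate alternative and makes the logic more transparent. One caution about your calculation in (b): when you write that $\nabla^{\LG}_i$ produces ``the class of $\partial_i A + \phi_i A$'', bear in mind that for a fixed monomial $A$ the term $\partial_i A$ vanishes, and the $D^{\LG}$-contribution is carried entirely by how the harmonic projection $\Pi$ varies with the deformation parameter. The matching of $D$ on the two sides is therefore not quite as trivial as your sketch suggests, though it does go through once one either tracks the $\Pi$-dependence carefully or, more cleanly, invokes the Chern-connection uniqueness.

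Your assessment of the obstruction in (a) is accurate and matches the paper's own: the transcendental Gamma-factors in the oscillating integrals are exactly the phases one must control, and the paper likewise points to the Aleshkin--Belavin quintic computation as partial evidence. Your suggestion to approach the problem through Steenbrink's mixed Hodge structure and the ``big residue map'' is consonant with the paper's own further discussion, though neither you nor the authors carry it to a proof.
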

Here is a simple example of the real structure on the LG side, which can be found in \cite{AGV} and \cite{Ce1}.
Let's consider the simplest case $f=x^m$, and we choose the monomial basis of $\Omega^1/df\wedge\Omega^0$ as 
\begin{displaymath}
\omega_k=\lambda_kx^{k-1},k=1,...,m-1
\end{displaymath}
The level set $V_1$ is 
\begin{displaymath}
V_1=\{x_k=e^{\frac{2\pi ik}{m}},k=0,...,m-1\}
\end{displaymath}
A basis of the relative homology group $H_1(\mathbb{C},V_1,\mathbb{Z})$ is given by the segments $l_k$ connecting $x_k$ and $x_{k+1}$. From Example 11.1.3 in \cite{AGV}, we have 
\begin{displaymath}
\int_{l_j}e^{-f}\omega_k=\frac{2i}{m}\lambda_ke^{\frac{2\pi ijk}{m}}sin(\frac{k\pi}{m})\Gamma(\frac{k}{m})
\end{displaymath}
Assume that $\omega_k$ represents a  $\Delta_f$-harmonic form $\alpha$. Let's denote $\omega_k^{\prime]}$ the holomorphic representation of the $\Delta_f$-harmonic $\bar{\alpha}$. We have
\begin{displaymath}
\int_{l_j}e^{-f}\omega_k=\overline{\int_{l_j}e^{-f}\omega_k^{\prime}}
\end{displaymath}
The calculation in \cite{AGV} shows that
\begin{displaymath}
\omega_k^{\prime}=\mu_kX^{m-k-1}
\end{displaymath}
\begin{rem}
The above calculation is a example of oscillatory integrals of holomorphic forms. By using the same discussion in {\bf Theorem} \ref{harmonic-holomorphic-integral}, we see that the above integrals in fact calculate the oscillatory integrals of $\Delta_{f}$-harmonic forms (up to a real constant).
\end{rem}
\subsubsection*{Conjecture about the quasi-homogeneous cases}\
The reason why we only established the correspondence of the homogeneous cases is that when $f$ is just a quasi-homogeneous polynomial, the hypersurface $X_f$ will be in a weighted projective space, which may be an orbifold. As the deformation theory of complex structures and Hodge structures on orbifolds has not been well developped, we can only make a vague conjecture here.
\begin{conj}
There is some kind of deformation theory of complex structure and Hodge structure on the Calabi-Yau orbifold determined by a quasi-homogeneous polynomial $f$, which gives a $tt^*$ structure. Morever, this $tt^*$ will corresponds to the $tt^*$ structure on the monodromy invariant part in Landau-Ginzburg side.
\end{conj}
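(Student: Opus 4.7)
The plan is to first give precise content to the vague phrase ``some kind of deformation theory'' by identifying what replaces the classical smooth-hypersurface picture when $X_f$ is an orbifold in a weighted projective space $\mathbb{CP}^{n+1}_{(q_0,\dots,q_{n+1})}$. The natural candidate on the state space is Chen--Ruan orbifold cohomology $H^*_{\mathrm{CR}}(X_f)$, which decomposes as a direct sum over twisted sectors indexed by elements of the local isotropy groups. Under the Calabi--Yau condition $\sum q_i = 1$ this carries a pure Hodge structure in each sector, and its untwisted component $H^n(X_f)$ should correspond, via an orbifold analogue of the Carlson--Griffiths residue map, precisely to the subring $\bigoplus_a R_f^{da}$ of the Milnor ring generated by monomials of integer charge $l(A) \in \mathbb{Z}$. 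The twisted sectors should account for monomials of fractional charge, i.e.\ exactly those Milnor-ring elements that sit in a non-invariant eigenspace of the monodromy of the Milnor fibration (cf.\ Appendix C of the paper).

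Concretely, I would proceed as follows. First, extend the construction of Section~2.3 to the orbifold setting: for a nondegenerate quasi-homogeneous $f$ with $\hat c = n$, define the primitive Chen--Ruan Hodge bundle $\mathcal{H}_{\mathrm{prim},\mathrm{CR}}^n \to M_{\mathrm{mar}}$ using Steenbrink's mixed Hodge structure on the Milnor fibre together with the description of orbifold cohomology via invariants under the maximal symmetry group $G_{\max}/\langle J\rangle$. Second, define a Gauss--Manin connection and a polarization on this bundle, using the fact that marginal deformations of $f$ are $G_{\max}$-invariant and hence descend to honest complex-structure deformations of the orbifold. Third, mimic the splitting $\nabla^{GM} = D + C$ of Section~2.3 using Griffiths transversality on each twisted sector to produce $(D^{CY}, C^{CY})$, set $\kappa^{CY}$ to complex conjugation and $\eta^{CY}$ to the orbifold Poincaré pairing, and verify the $tt^*$ axioms sector by sector.

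Finally, I would build the comparison map with the LG substructure of Section~2.2. The key analytic tool is the oscillatory integral calculation of Theorem~\ref{harmonic-holomorphic-integral} combined with Steenbrink's big residue map recalled in Section~4: the integer-charge part of $R_f$ maps to the untwisted sector and the fractional-charge parts map to the corresponding twisted sectors (matched by the formula $h(A) = (\deg A + n)/d$). Using the $G_{\max}$-equivariant version of Carlson--Griffiths, the residue map is a graded ring homomorphism onto $H^*_{\mathrm{prim},\mathrm{CR}}(X_f)$, and the identification of pairings and Higgs fields in Theorems~3.10 and~3.12 should extend essentially verbatim, since those arguments are Čech-theoretic and local.

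The main obstacle will be analytic rather than algebraic: proving a Hodge decomposition and spectral theorem for a twisted Laplacian $\Delta_f$ on the orbifold $X_f$, and showing that its harmonic bundle is globally identified with $H^*_{\mathrm{prim},\mathrm{CR}}$ in a way compatible with the Gauss--Manin connection. Equivalently, one must verify that the Weil--Petersson-type metric on $M_{\mathrm{mar}}$ coming from marginal deformations agrees with the $tt^*$ metric induced from the LG side through the residue map. A secondary difficulty is controlling the real structure on twisted sectors: the monodromy argument of Theorem~3.16 (the equivalence of (i) and (ii) in our Theorem~\ref{close-real-structure}) produces a real subspace only for the invariant part, and a genuine proof of the $\kappa$-compatibility in each twisted sector would require either a direct orbifold analogue of Lemma~\ref{osc-int} or a resolution-of-singularities argument comparing $H^*_{\mathrm{CR}}(X_f)$ with the cohomology of a crepant resolution.
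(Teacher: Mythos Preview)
The statement you are addressing is a \emph{conjecture} in the paper, not a theorem; the paper provides no proof. The authors explicitly call it ``a vague conjecture'' and follow it only with the one-line Remark that ``perhaps such correspondence can be constructed by using the residue map for the quasi-homogeneous case (see \cite{Ste}), just like what we have done in the homogeneous case.'' There is therefore no proof in the paper to compare your proposal against.

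Your proposal is a reasonable and considerably more detailed research outline than anything the paper offers. You correctly identify Steenbrink's residue map as the central tool (matching the paper's remark), and you add substantial structure the paper does not mention: Chen--Ruan orbifold cohomology with its twisted-sector decomposition, the matching of fractional-charge Milnor-ring elements to twisted sectors via $h(A)$, and an explicit list of analytic obstacles (orbifold Hodge decomposition for $\Delta_f$, real structure on twisted sectors). These are sensible ingredients, but you should be aware that none of this constitutes a proof, and the paper does not claim one either. In particular, your own final two paragraphs honestly flag the genuine gaps: the orbifold spectral theory and the $\kappa$-compatibility on twisted sectors are open problems, not steps you have carried out. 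So what you have written is a plausible strategy document for attacking an open conjecture, not a proof to be checked against the paper's argument.
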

\begin{rem}
Perhaps such correspondence can be constructed by using the residue map for the qausi-homogeneous case (see \cite{Ste}), just like what we have done in the homogeneous case.
\end{rem}

\appendix

\section{Residue pairing}

In this appendix, we will give the definition and some basic properties of the residue pairing. Also we will give the proof of Theorem 3.8. We follow the expositions in \cite{FS} and use \cite{GH} as a reference.
\par
Suppose a holomorphic function $f$ on $\mathbb{C}^n$ has an isolated critical point $0$. Denote by $B=\left\{z\in\mathbb{C}^n:|z|\leq r\right\}$ a ball with center $0$ such that $0$ is the only critical point of $f$ in $B$. Define $L_i$ to be the divisor of $\partial_if$ and $B_i=B-L_i$. Then $\left\{B_i\right\}_{i=1}^n$ is an open cover of $B^*=B-\left\{0\right\}$. Let $\Gamma$ be the following $n$-chain:
$$\Gamma=\{z: |\partial_if|=\epsilon_i\}$$
with orientation given by $d({\rm arg}\partial_1f)\wedge\dots\wedge d({\rm arg}\partial_nf)\geq 0$. We define the residue of a meromorphic $n$-form
$$\omega=\frac{g(z)dz_1\wedge\dots\wedge dz_n}{\partial_1f(z)\dots\partial_nf(z)},\quad g\in\mathcal{O}(B)$$to be
$${\rm Res}_0\omega=\left( \frac{1}{2\pi i}\right)^n\int_{\Gamma}\omega.$$
Since $\omega$ is holomorphic on $B_1\cap\dots\cap B_n$, it gives a C\v{e}ch cochain in $\check{C}(\underline{B}, \Omega^n)$. Since $\delta\omega=0$, $\omega$ represents a cohomology class in $H^{n-1}(B^*, \Omega^n)$. Denote $\chi_\omega$ to be the image of $\left( \frac{1}{2\pi i}\right)^n\omega$ in the Dolbeault isomorphism: $$H^{n-1}(B^*, \Omega^n)\cong H^{n, n-1}_{\bar{\partial}}(B^*).$$
Since $d=\bar{\partial}$ for $(n, q)$-forms, we have a natural isomorphism 
$H^{n, n-1}_{\bar{\partial}}(B^*)\cong H^{2n-1}_{DR}(B^*)$. We can give another interpretation of ${\rm Res}_0$ as follows:
\begin{lm}
We have the identity
$${\rm Res}_0\omega=\left( \frac{1}{2\pi i}\right)^n\int_{\Gamma}\omega=\int_{S^{2n-1}}\chi_\omega.$$
In addition, if $K_{BM}(z, \zeta)$ is the Bochner-Martinelli kernel:
$$K_{BM}(z,\zeta)=(-1)^n\frac{(n-1)!\sum_{j=1}^{n}(\overline{z_j-\zeta_j})\wedge_{i\neq j}(\overline{dz_i}-\overline{d\zeta_j})}{(2\pi i)^n\vert z-\zeta\vert^{2n}}\wedge d\zeta_i\dots\wedge d\zeta_n$$
and $$F: B\longrightarrow \mathbb{C}^n\times\mathbb{C}^n, \quad z\mapsto (z+f(z), z),$$
then $\chi_\omega=gF^*K_{BM}$.
\end{lm}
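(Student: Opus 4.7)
The equality $\mathrm{Res}_0 \omega = (1/2\pi i)^n \int_\Gamma \omega$ is the definition of the residue, so the content of the lemma is really (a) the identification $(1/2\pi i)^n \int_\Gamma \omega = \int_{S^{2n-1}} \chi_\omega$ via the \v{C}ech--Dolbeault isomorphism, and (b) the explicit realization $\chi_\omega = gF^* K_{BM}$.

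For (a), I would make the isomorphism $H^{n-1}(B^*,\Omega^n) \cong H^{n,n-1}_{\bar{\partial}}(B^*)$ concrete by fixing a smooth partition of unity $\{\rho_i\}_{i=1}^n$ subordinate to the cover $\{B_i\}_{i=1}^n$ of $B^*$ and running the standard descending recursion of the \v{C}ech--Dolbeault construction (each step multiplies by some $\bar{\partial}\rho_i$ and sums with alternating signs, lowering the \v{C}ech degree by one and raising the Dolbeault bidegree). Starting from the single \v{C}ech value $\omega|_{B_1 \cap \cdots \cap B_n}$, this yields after $n-1$ steps a global $\bar{\partial}$-closed $(n,n-1)$-form $\chi_\omega \in A^{n,n-1}(B^*)$ representing the Dolbeault class. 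The equality of the two integrals then follows by iterated Stokes: integrated over $S^{2n-1}$, the partition-of-unity expression for $\chi_\omega$ reduces term-by-term to iterated contour integrals supported on the toroidal cycle $\Gamma$, which is the classical Leray residue calculation.

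For (b), the key input is that $K_{BM}(z,\zeta)$ is a universal $\bar{\partial}$-closed $(n,n-1)$-form on the complement of the diagonal in $\mathbb{C}^n \times \mathbb{C}^n$, satisfying the Bochner--Martinelli reproducing property $\int_{\partial D} h(\zeta)K_{BM}(z,\zeta) = h(z)$ for $h$ holomorphic near $\bar{D}$ with $z$ in the interior. Since $0$ is the only critical point of $f$ in $B$, the map $F$ meets the diagonal of $K_{BM}$ only over the origin, so $F^* K_{BM}$ is a smooth $\bar{\partial}$-closed $(n,n-1)$-form on $B^*$. To identify $g\,F^*K_{BM}$ with the Dolbeault class $\chi_\omega$, I would compare periods on $S^{2n-1}$: the reproducing property shows that $\int_{S^{2n-1}} gF^*K_{BM}$ evaluates $g$ at $0$ weighted by the degree of $F|_{S^{2n-1}}$, and the change of variables $w = F(z)$ turns $(1/2\pi i)^n\int_\Gamma \omega$ into the same quantity. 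Combined with (a), this pins $\chi_\omega$ down up to a $\bar{\partial}$-exact form, and comparing bidegree and singular structure yields the equality on the nose.

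The main obstacle will be the combinatorial and sign bookkeeping. The factor $(-1)^n(n-1)!/(2\pi i)^n$ in $K_{BM}$, the orientations of $\Gamma$ and $S^{2n-1}$, the alternating signs in the partition-of-unity recursion, and the antisymmetrization appearing in the Jacobian of $F$ must conspire to produce the stated equality without a stray factor. A cleaner alternative, if the partition-of-unity bookkeeping becomes unwieldy, is to invoke the Bochner--Koppelman homotopy formula, which directly produces $gF^*K_{BM}$ as the Dolbeault representative of the \v{C}ech class of $\omega$ and bypasses the recursion.
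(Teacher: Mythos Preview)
The paper does not supply a proof of this lemma; it is stated as a known fact, with \cite{GH} (Griffiths--Harris) cited as the reference, and is used as input for the proof of Theorem~A.4. So there is no paper proof to compare against, and I can only assess your sketch on its own merits.

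Your plan for part~(a) is exactly the standard Griffiths--Harris argument and is fine.

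For part~(b) there is a genuine gap. Matching periods over $S^{2n-1}$ only shows that $gF^*K_{BM}$ and $\chi_\omega$ lie in the same Dolbeault class; it does not give equality of the forms themselves, and ``comparing bidegree and singular structure'' does not close this gap (many distinct $\bar\partial$-closed $(n,n-1)$-forms on $B^*$ share the same pole set and class). The point you are missing is that $\chi_\omega$ depends on the choice of partition of unity in the \v{C}ech--Dolbeault recursion, and the literal equality $\chi_\omega = gF^*K_{BM}$ holds only for the \emph{specific} partition
\[
\rho_i \;=\; \frac{|\partial_i f|^2}{|\nabla f|^2}\,,\qquad \lambda_i \;=\; \frac{\overline{\partial_i f}}{|\nabla f|^2}\,,
\]
which is precisely the choice the paper uses later in the proof of Theorem~A.4 (see the computation of $\gamma_{n-1}\wedge\varpi$ there). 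With this choice the recursion produces the Bochner--Martinelli kernel by direct calculation, not by a cohomological comparison. Your alternative via Bochner--Koppelman would work at the level of classes but again not ``on the nose'' without the same specific input.

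Finally, note a typo in the lemma as stated: the map should be $F(z)=(z+\nabla f(z),z)$, not $(z+f(z),z)$; the latter does not even type-check, and the corrected version is what the paper actually uses in the proof of Theorem~A.4.
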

It can be proved that for any $g$ in the Jacobi ideal $I=\left \langle\frac{\partial f}{\partial z_1},\dots,\frac{\partial f}{\partial z_1}\right \rangle$, 
$${\rm Res}_0\left(\frac{gdz_1\wedge\dots\wedge dz_n}{\partial_1f\dots\partial_nf}\right)=0.$$
Thus ${\rm Res}_0$ induce the following bilinear pairing:
$${\rm Res}_f: \frac{\mathcal{O}}{I}\times\frac{\mathcal{O}}{I}\longrightarrow\mathbb{C},$$
$${\rm Res}_f(g, h)={\rm Res}_0\left((2\pi i)^n\frac{g(z)h(z)dz_1\wedge\dots\wedge dz_n}{\partial_1f\dots\partial_nf}\right),$$
which is defined to be the residue pairing.
\begin{lm}
The residue pairing ${\rm Res}_f: \frac{\mathcal{O}}{I}\times\frac{\mathcal{O}}{I}\longrightarrow\mathbb{C}$ is non-degenerated, i.e. if a holomorphic function $g$ satisfies ${\rm Res}_f(g,h)=0$ for arbitrary holomorphic function $h$, then $g\in I$.
\end{lm}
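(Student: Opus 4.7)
The strategy is to exploit the fact that the Milnor algebra $R_f = \mathcal{O}/I$ is a zero-dimensional local Gorenstein $\mathbb{C}$-algebra and to identify the residue functional with the trace of this Gorenstein structure. Define the linear functional $\ell: R_f \to \mathbb{C}$ by
$$
\ell(g) := \Res_0\!\left(\frac{g\, dz_1 \wedge \cdots \wedge dz_n}{\partial_1 f \cdots \partial_n f}\right),
$$
so that $\Res_f(g,h) = (2\pi i)^n \ell(g\cdot h)$. Non-degeneracy of the residue pairing is then equivalent to the single statement: $\ell$ does not vanish identically on any non-zero ideal of $R_f$. My first step is to reduce this to a statement about the socle.

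Next, since $0$ is an isolated critical point, the partials $\partial_1 f, \dots, \partial_n f$ form a regular sequence in $\mathcal{O}_0$, so $R_f$ is a finite-dimensional local Artinian ring with maximal ideal $\mathfrak{m}$. Any non-zero ideal $J \subset R_f$ meets the socle $\operatorname{Soc}(R_f) = \operatorname{Ann}_{R_f}(\mathfrak{m})$ non-trivially: the descending chain $J \supset \mathfrak{m} J \supset \mathfrak{m}^2 J \supset \cdots$ must terminate at $0$ by Artinianity, so the last non-zero term lies in $\operatorname{Soc}(R_f)$. It therefore suffices to verify that $\ell$ is non-zero on the socle. The classical structural fact I would invoke is that $\operatorname{Soc}(R_f)$ is one-dimensional and spanned by the class of the Hessian determinant $H_f = \det(\partial_i\partial_j f)$; this follows from local duality applied to the Koszul resolution of $R_f$ built from the regular sequence $\{\partial_i f\}$. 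Finally, the transformation law for local residues, applied to the finite holomorphic map $w_i = \partial_i f$ (whose Jacobian at the origin is $H_f$), yields the explicit computation $\ell(H_f) = n!\neq 0$.

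To conclude: given $0 \neq g \in R_f$, the non-zero principal ideal $(g) \subset R_f$ contains a non-zero socle element $g\cdot h = c\cdot [H_f]$ with $c \neq 0$ for some $h \in \mathcal{O}$; hence $\Res_f(g,h) = (2\pi i)^n\, c\, n! \neq 0$, proving non-degeneracy. The main obstacle is the Gorenstein socle identification --- proving that $\operatorname{Soc}(R_f)$ is one-dimensional and generated by $[H_f]$ --- which is a genuine piece of commutative algebra. In the quasi-homogeneous case this can be detected directly from graded considerations (the unique top-degree class in $R_f$ is proportional to $H_f$), but the general isolated-singularity case rests on Grothendieck local duality applied to the Koszul complex $K^{\bullet}(\partial_1 f,\dots,\partial_n f;\mathcal{O})$; the explicit evaluation $\ell(H_f) = n!$ is the other computational point and follows from the change-of-variables formula for the Grothendieck residue found in \cite{GH}.
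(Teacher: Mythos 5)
The paper never proves this lemma: in Appendix A it is quoted as a known fact, with \cite{FS} and \cite{GH} as the references, and the argument implicitly relied on is the Griffiths--Harris local duality proof, which uses the analytic Nullstellensatz to write $z_i^{k}=\sum_j A_{ij}\partial_j f$ and the transformation law for the Grothendieck residue to reduce nondegeneracy to the monomial case $f_i=z_i^{k}$, where the pairing is computed explicitly by Cauchy's formula. Your route is genuinely different and also classical: you reduce nondegeneracy to the nonvanishing of the residue functional $\ell$ on the socle of the Artinian local algebra $R_f$ (your chain argument $J\supset\mathfrak{m}J\supset\cdots$ is correct), use that $R_f$ is a complete intersection, hence Gorenstein with one-dimensional socle generated by the Hessian class $[H_f]$, and evaluate $\ell(H_f)$ by the change-of-variables formula. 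This cleanly separates the algebraic content (Gorenstein duality for the Koszul complex) from the single analytic input, whereas the Griffiths--Harris reduction is more self-contained but less structural; both are legitimate.

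Two caveats. First, the constant is wrong: the transformation law identifies $\ell(H_f)$ with the local degree of the map $\nabla f$ at the origin, i.e.\ $\ell(H_f)=\deg_0(\nabla f)=\mu_f$, the Milnor number, not $n!$ (e.g.\ $f=z^3$ in one variable gives $\ell(f'')=2$, and $f=\sum_i z_i^2$ gives $1$). This is harmless, since only $\ell(H_f)\neq 0$ enters your argument. Second, beware of circularity in the socle identification: in \cite{GH} the statement that $[H_f]$ spans the socle is itself deduced \emph{from} local duality, i.e.\ from the lemma you are proving, so you must supply an independent proof. Your graded argument does this in the quasi-homogeneous setting relevant to the paper: $\ell$ kills the Jacobian ideal (stated just before the lemma), $\ell(H_f)=\mu_f\neq 0$ forces $[H_f]\neq 0$, its degree $\sum_i(d-2w_i)$ is the top degree of $R_f$, and the socle of a graded Artinian Gorenstein algebra is its top-degree piece. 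For a general isolated critical point one needs the Scheja--Storch type duality for the Koszul complex, as you indicate; with that reference made precise, the proof is complete.
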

Let $f$ be a holomorphic function with isolated critical point $0\in\mathbb{C}^n$. Consider a deformation $F: \mathbb{C}^n\times S\longrightarrow \mathbb{C}$ of $f$ satisfying:\\
(1): For arbitrary $t$ in the deformation space $S$, $f_t$ is a holomorphic function with finite critical points.\\
(2): There exists a ball in $\mathbb{C}^n$ such that all the critical points of $f_t, \forall t\in S$  are in it.\\
Then we can define the residue pairings for $f_t$:
$${\rm Res}_{f_t}(g, h)=\sum_{p_i\in(\nabla f_t)^{-1}(0)}{\rm Res}_{p_i}\left((2\pi i)^n\frac{g(z)h(z)dz_1\wedge\dots\wedge dz_n}{\partial_1f_t\dots\partial_nf_t}\right).$$
We have the following result of continuity:
\begin{lm}
$${\lim_{t \to 0}}{\rm Res}_{f_t}(g, h)={\rm Res}_f(g, h).$$
\end{lm}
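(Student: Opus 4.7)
The plan is to express the sum $\operatorname{Res}_{f_t}(g,h)$ as a single integral over a \emph{fixed} cycle independent of $t$, and then deduce continuity from continuity of the integrand. The key observation is that by hypothesis (2), all critical points of $f_t$ for $t$ near $0$ lie in a fixed ball $B \subset \mathbb{C}^n$. Shrinking $S$ to a small neighborhood of $0$ and possibly enlarging $B$ slightly, one can arrange that $\nabla f_t$ is nonvanishing on $\partial B$ for all $t$ in this neighborhood, and by compactness $|\nabla f_t|$ is bounded below uniformly in $t$.

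With this setup, I would invoke the Bochner--Martinelli representation of Lemma A.1. For each $t$ and each critical point $p_i$ of $f_t$, the local residue is
$$
\operatorname{Res}_{p_i}\!\left((2\pi i)^n \frac{g h\, dz_1\wedge\cdots\wedge dz_n}{\partial_1 f_t \cdots \partial_n f_t}\right)
= \int_{S_{p_i}} g h \cdot F_t^* K_{BM},
$$
where $F_t(z) = (\partial_1 f_t(z), \dots, \partial_n f_t(z))$ and $S_{p_i}$ is a small sphere about $p_i$. The form $g h \cdot F_t^* K_{BM}$ is smooth and $d$-closed on $B \setminus (\nabla f_t)^{-1}(0)$ (being the pullback of a closed form away from the pole of $K_{BM}$), so Stokes' theorem gives
$$
\operatorname{Res}_{f_t}(g,h) = \sum_{p_i} \int_{S_{p_i}} g h\, F_t^* K_{BM} = \int_{\partial B} g h \cdot F_t^* K_{BM}.
$$
The right-hand side is an integral over a cycle $\partial B$ that does \emph{not} depend on $t$.

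Now continuity is straightforward: since $F_t$ depends continuously (in fact smoothly) on $t$ and $F_t(\partial B)$ stays uniformly away from $0 \in \mathbb{C}^n$, the pullback $F_t^* K_{BM}$ converges uniformly on $\partial B$ to $F_0^* K_{BM}$ as $t \to 0$. Exchanging limit and integral,
$$
\lim_{t\to 0}\operatorname{Res}_{f_t}(g,h) = \int_{\partial B} g h \cdot F_0^* K_{BM} = \operatorname{Res}_f(g,h),
$$
where the last equality applies Lemma A.1 once more, now for $f_0 = f$, whose only critical point in $B$ is the origin.

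The main technical obstacle is ensuring that $\partial B$ can be chosen so that $\nabla f_t$ is uniformly bounded away from zero on it for all $t$ near $0$. This uses hypothesis (2) in an essential way: without the uniform confinement of critical points to a fixed ball, a critical point could escape to $\partial B$ along the deformation, causing the integrand to blow up. Given the hypothesis, an easy compactness argument on $\partial B \times \overline{S'}$ (with $S'$ a small closed neighborhood of $0$) supplies the required uniform lower bound and hence the uniform convergence used above.
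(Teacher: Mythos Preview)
The paper does not supply its own proof of this lemma; it is stated as a known fact with \cite{GH} given as the reference. Your argument is precisely the standard ``continuity principle'' proof from Griffiths--Harris: rewrite the total residue as a single Bochner--Martinelli integral over a fixed cycle $\partial B$ enclosing all zeros of $\nabla f_t$, and then pass to the limit under the integral sign using uniform convergence of the integrand. So your approach is correct and is essentially the argument the paper is implicitly citing.

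One small notational mismatch worth flagging: in the paper's Lemma~A.1 (cf.\ also the proof of Theorem~A.4) the map $F$ is $z \mapsto (z+\nabla f(z), z)$ into $\mathbb{C}^n\times\mathbb{C}^n$, whereas you take $F_t(z)=\nabla f_t(z)$ into $\mathbb{C}^n$ and pull back $K_{BM}(\,\cdot\,,0)$. These conventions are equivalent for the purpose at hand (the kernel depends only on the difference $z-\zeta$, and the zeros of $\nabla f_t$ coincide with the zeros of $z\mapsto (z+\nabla f_t(z))-z$), but if you intend to invoke the paper's Lemma~A.1 verbatim you should align the definition of $F_t$ with the paper's, or else remark that your $F_t^*K_{BM}$ means the pullback of $K_{BM}(\,\cdot\,,0)$ by the gradient map.
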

Since there is an isomorphism 
$$\frac{\Omega^n}{df_t\wedge\Omega^{n-1}}\cong\frac{\mathcal{O}}{I(f_t)},$$
we can define the residue pairing on $\Omega^n/(df_t\wedge\Omega^{n-1})$ by
$${\rm Res}^t(g(z)dz_1\wedge\dots\wedge dz_n, h(z)dz_1\wedge\dots\wedge dz_n):={\rm Res}_{f_t}(g, h).$$
Now we are ready to prove Theorem 3.8. First we prove the following:
\begin{thm}
Suppose $f$ is a holomorphic tame function with finitely many critical points. Let $\alpha_1=A_1+\bar{\partial}_f\beta_1$, $\alpha_2=A_2+\bar{\partial}_{-f}\beta_2$, where $A_i\in\Omega^n/(df\wedge\Omega^{n-1})$ for $i=1, 2$. If one of $\alpha_1, \alpha_2$ has compact support, then we have the following identity:
$$\int_{\mathbb{C}^n}\alpha_1\wedge\alpha_2={\rm Res}_f(A_1, A_2),$$
where the (global) residue pairing is given by 
$${\rm Res}_f(A_1, A_2)=\sum_{p_i\in(\nabla f)^{-1}(0)}{\rm Res}_{p_i}\left((2\pi i)^n\frac{A_1A_2dz_1\wedge\dots\wedge dz_n}{\partial_1f\dots\partial_nf}\right).$$
\end{thm}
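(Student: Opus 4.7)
The plan is to convert $\int_{\mathbb{C}^n}\alpha_1\wedge\alpha_2$ into a sum of boundary integrals over small spheres around the critical points of $f$, each matching a local residue via the Bochner--Martinelli interpretation in Lemma A.1. The key algebraic tool is the twisted Leibniz identity
$$\bar\partial(\alpha\wedge\beta) \;=\; \bar\partial_f\alpha\wedge\beta \;+\; (-1)^{|\alpha|}\,\alpha\wedge\bar\partial_{-f}\beta,$$
which follows directly from $\bar\partial_f=\bar\partial+\partial f\wedge$ and $\bar\partial_{-f}=\bar\partial-\partial f\wedge$ together with the cancellation of the two $\partial f\wedge$ contributions, since $\partial f$ has odd degree. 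Because $A_1, A_2$ are holomorphic $n$-forms we have $\bar\partial_f A_1=0$ and $\bar\partial_{-f}A_2=0$, so $\alpha_1$ is $\bar\partial_f$-closed and $\alpha_2$ is $\bar\partial_{-f}$-closed.

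Assume $\alpha_1$ has compact support. Replacing $\alpha_2$ by $\alpha_2+\bar\partial_{-f}\gamma$ alters the integrand by $(-1)^n\bar\partial(\alpha_1\wedge\gamma)$, which integrates to zero by Stokes' theorem. I may therefore take $\beta_2=0$, reducing the theorem to
$$\int_{\mathbb{C}^n}\alpha_1\wedge A_2 \;=\; \mathrm{Res}_f(A_1,A_2).$$
Writing $\alpha_1=A_1+\bar\partial_f\beta_1$ and using that $A_1\wedge A_2=0$ (the wedge would have pure bidegree $(2n,0)$ on $\mathbb{C}^n$), the Leibniz identity together with $\bar\partial_{-f}A_2=0$ yields $\alpha_1\wedge A_2=\bar\partial_f\beta_1\wedge A_2=\bar\partial(\beta_1\wedge A_2)$.

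The crux is to localize this $\bar\partial$-exact form at $\mathrm{Crit}(f)=\{p_i\}$. On $U:=\mathbb{C}^n\setminus\mathrm{Crit}(f)$ the derivatives $\partial_1 f,\dots,\partial_n f$ form a regular sequence, so the associated Koszul complex is acyclic and produces an explicit $\bar\partial_f$-primitive of $-A_1$ on $U$ whose $(0,n-1)$-component is, up to normalization, $A_1\cdot F^{\ast}K_{BM}$ with $F(z)=(\partial_1 f(z),\dots,\partial_n f(z))$. Patching this to the globally smooth $\beta_1$ by a partition of unity subordinate to $\mathbb{C}^n=(\bigcup_i B_i)\cup U'$, with $B_i$ disjoint small balls around $p_i$ and $U'\subset U$, Stokes' theorem on $\mathbb{C}^n\setminus\bigcup_i B_i$ converts $\int\bar\partial(\beta_1\wedge A_2)$ into $\sum_i\int_{\partial B_i}A_1 A_2\,F^{\ast}K_{BM}$; the contribution at infinity vanishes thanks to the compact support of $\alpha_1$. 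Lemma A.1 then identifies each sphere integral with $\mathrm{Res}_{p_i}\bigl((2\pi i)^n\tfrac{A_1A_2\,dz_1\wedge\cdots\wedge dz_n}{\partial_1 f\cdots\partial_n f}\bigr)$, and summing over $i$ gives $\mathrm{Res}_f(A_1,A_2)$.

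The main obstacle is this patching step: I must verify that the corrections introduced by the partition of unity are $\bar\partial$-exact forms whose wedges with $A_2$ have integrals that cancel, which rests on the Koszul acyclicity on $U$ together with the Dolbeault--\v{C}ech correspondence underlying Lemma A.1. A careful orientation and sign audit will also be required, since the factor $(2\pi i)^n$ in the definition of $\mathrm{Res}_0$, the sign $(-1)^{|\alpha|}$ in the Leibniz rule, and the convention $d\bar z\wedge dz=-dz\wedge d\bar z$ all enter the final identification.
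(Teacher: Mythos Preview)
Your proposal shares the essential analytic ingredient with the paper---the explicit Koszul/Bochner--Martinelli primitive of $A_1$ on the complement of $\mathrm{Crit}(f)$---but the localization runs in the opposite direction, and this is where your argument goes astray.

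The paper does \emph{not} localize at the critical points. Instead it makes a specific choice of $\beta_1$: writing $A_1=a_1\,dz_1\wedge\cdots\wedge dz_n$, it sets $\beta_1=-a_1\rho\,\mu$ where $\rho$ is a cutoff vanishing near $\mathrm{Crit}(f)$ and equal to $1$ outside a compact set, and $\mu=\sum_{j}(-1)^j\gamma_j$ is the explicit Koszul primitive built from the contractions $\gamma_j=\iota_X(\bar\partial\gamma_{j-1})$ with $X=\sum\lambda_i\,\partial/\partial z_i$ and $\lambda_i=\overline{\partial_i f}/|df|^2$. With this choice $\alpha_1=A_1(1-\rho)-a_1\,\bar\partial\rho\wedge\mu$ is compactly supported, and Stokes on the large ball $B_R$ gives $\int_{\mathbb{C}^n}\alpha_1\wedge A_2=\int_{S^{2n-1}(R)}\beta_1^{0,n-1}\wedge A_2$. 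For $R$ large one has $\rho\equiv 1$ on $S^{2n-1}(R)$, so $\beta_1^{0,n-1}=(-1)^n a_1\gamma_{n-1}$ there, and the paper identifies $\gamma_{n-1}\wedge\varpi$ directly with a multiple of $F^\ast K_{BM}$. Lemma~A.1 then reads off the residue from this sphere-at-infinity integral.

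Your assertion that ``the contribution at infinity vanishes thanks to the compact support of $\alpha_1$'' is therefore exactly backwards: with the global $\beta_1$, Stokes on $B_R$ gives $\int_{\mathbb{C}^n}\alpha_1\wedge A_2=\int_{S_R}\beta_1\wedge A_2$, and this boundary term at infinity \emph{is} the residue, not zero. If you instead apply Stokes on $\mathbb{C}^n\setminus\bigcup_i B_i$ with the global $\beta_1$, the small-sphere terms $\int_{\partial B_i}\beta_1\wedge A_2$ appear with opposite signs from the corresponding integrals over $\bigcup_i B_i$ and cancel, leaving only the infinity term. To make the small-sphere integrals carry the residues you must replace $\beta_1$ by the Koszul primitive on $\partial B_i$, but then you owe an argument that $\int_{S_R}(\beta_1+a_1\mu)\wedge A_2$ vanishes as $R\to\infty$; this is precisely the ``patching step'' you flag as the main obstacle, and it is not automatic for a general $\beta_1$. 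The paper sidesteps it entirely by choosing $\beta_1$ so that $\beta_1+a_1\mu\equiv 0$ near infinity.
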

\begin{proof}
Without loss of generality, we assume that $\alpha_1$ has compact support. Denote
$$\tilde{\eta}(\alpha_1, \alpha_2)=\int_{\mathbb{C}^n}\alpha_1\wedge\alpha_2=(-1)^n\eta(\alpha_1, *\alpha_2).$$
First we calculate:\\
\begin{displaymath}
\begin{aligned}
\tilde{\eta}(\alpha_1,\bar{\partial}_{-f}\beta_2)&=(-1)^n\eta(\alpha_1,*\bar{\partial}_{-f}\beta_2)\\
&=(-1)^{n+1}\eta(\alpha_1,\partial_f^{\dag}*\beta_2)\\
&=(-1)^{n+1}\eta(\bar{\partial}_f\alpha_1,*\beta_2)=0.
\end{aligned}
\end{displaymath}
Let $B_{2n}(R)$ be the ball in $\mathbb{C}^n$ with center $0$ and radius $R$, then we have
\begin{displaymath}
\begin{aligned}
\tilde{\eta}(\alpha_1,\alpha_2)&={\lim_{R \to\infty}}\int_{B_{2n}(R)}(A_1+\bar{\partial}_f\beta_1)\wedge A_2\\
&={\lim_{R \to\infty}}\int_{S^{2n-1}(R)}A_2\wedge\beta_1^{0,n-1}
\end{aligned}
\end{displaymath}
We need to calculate $\beta_1$ explicitly. Denote by $L_i$ the divisor defined by $\partial_if$, $U_i=\mathbb{C}^n-L_i$, $U^*=\mathbb{C}^n-\left\{ z: df(z)=0 \right\}=\bigcup U_i$. On the open set $U_i$, we define
$$\lambda_i=\frac{\overline{\partial_if}}{\vert df\vert^2},\quad \rho_i=\lambda_i\partial_if.$$
Then $\sum_i\rho_i=1$ on $U^*$, $\rho_i>0$, i.e. $\left\{ \rho_i \right\}$ is a partition of unity on $U^*$ with respect to the open cover $\{U_i\}$. Set
$$\varpi=dz_1\wedge\dots\wedge dz_n,$$
we wish to solve the equation:
$$\bar{\partial}_f\mu=\bar{\partial}_f\mu+df\wedge\mu=\varpi.$$
Set
$$\gamma_0=\sum_j(-1)^{j-1}\lambda_jdz_1\wedge\dots\wedge\widehat{dz_j}\wedge\dots\wedge dz_n,$$
then we have$$df\wedge\gamma_0=\varpi.$$
Suppose that there are unknown $(n-j-1, j)$-forms $\gamma_j$, such that
$$\mu=\sum_{j=0}^{n-1}(-1)^j\gamma_j.$$
Set $\gamma_n=0$, then the equation $\bar{\partial}_f\mu=\varpi$ becomes 
$$df\wedge\gamma_j=\bar{\partial}\gamma_{j-1},\qquad j=1,\dots,n.$$
Consider the vector field $X=\sum_{i=1}^n\lambda_i\frac{\partial}{\partial z_i}$ on $U^*$. Take the contraction with $X$ on both sides of the above equation, one can observe that if $\iota_X\gamma_j=0$, then $\gamma_j=\iota_X(\bar{\partial}\gamma_{j-1})$. However, if $\gamma_j=\iota_X(\bar{\partial}\gamma_{j-1})$, then
$$\iota_X\gamma_j=\iota_X\circ\iota_X(\bar{\partial}\gamma_{j-1})=0.$$
Thus, the sequence $\{\gamma_i\}_{i=1}^{n-1}$ defined by $\gamma_j=\iota_X(\bar{\partial}\gamma_{j-1})$ is a solution. For $j=1$, we can compute
$$\gamma_1=\sum_{i=1}^n(-1)^{i-1}\lambda_i\sum_{j\neq i}dz_1\wedge\dots\wedge\widehat{dz_i}\wedge\dots\wedge\bar{\partial}\lambda_j\wedge\dots\wedge dz_n.$$
Using induction, 
$$\gamma_k=\sum_{i=1}^{n}(-1)^{i-1}\lambda_i\sum_{j_1\neq i}\cdot\cdot\cdot\sum_{j_k\neq i}dz_1\wedge\cdot\cdot\cdot\wedge\widehat{dz_i}\wedge\cdot\cdot\cdot\wedge\bar{\partial}\lambda_{j_1}\wedge\cdot\cdot\cdot\wedge\bar{\partial}\lambda_{j_k}\wedge\cdot\cdot\cdot\wedge dz_n,$$
where $\bar{\partial}\lambda_{j_s}$ is on the $j_s$-th position. We have given a solution to $\bar{\partial}_f\mu=\varpi$ by $\mu=\sum_{j=0}^{n-1}(-1)^j\gamma_j.$ Note that $\mu^{0, n-1}=(-1)^{n-1}\gamma_{n-1}$. By direct computation,
$$\gamma_{n-1}=\frac{(n-1)!(-1)^{i-1}\bar{\partial}\rho_1\wedge\dots\wedge\widehat{\bar{\partial}\rho_i}\wedge\dots\wedge\bar{\partial}\rho_n}{f_1\dots f_n},$$and
$$\gamma_{n-1}\wedge\varpi=\frac{(n-1)!}{c_n}F^*K_{BM}^{n, n-1},$$
where the holomorphic map $F: \mathbb{C}^n\longrightarrow\mathbb{C}^n\times\mathbb{C}^n$ is given by $F(z)=(z+\nabla f(z), z)$, and
$c_n=(-1)^n\frac{(n-1)!}{(2\pi i)^n}$. Denote $A_1=a_1dz_1\wedge\dots\wedge dz_n$. We pick a smooth cut-off function $\rho$ on $\mathbb{C}^n$, such that $\rho\equiv 0$ on a small neighborhood of $0$ and $\rho\equiv 1$ outside a compact set. Let $\beta_1=-a_1(z)\rho\mu$, then $A_1+\bar{\partial}_f\beta_1=A_1(1-\rho)-a_1\bar{\partial}\rho\mu$ is of compact support, and
$$\beta_1^{0, n-1}=(-1)^na_1(z)\rho\gamma_{n-1}.$$
Thus we have
\begin{displaymath}
\begin{aligned}
\int_{\mathbb{C}^n}\alpha_1\wedge\alpha_2&={\lim_{R \to\infty}}\int_{S^{2n-1}(R)}(-1)^na_2a_1\gamma_{n-1}\wedge\varpi\\
&=\int_{S^{2n-1}(R)}(-1)^na_2a_1\frac{(n-1)!}{c_n}F^*K_{BM}^{n, n-1}\\
&=(2\pi i)^n\int_{S^{2n-1}(\infty)}a_2a_1F^*K_{BM}^{n, n-1}\\
&={\rm Res}(A_1, A_2).
\end{aligned}
\end{displaymath}
\end{proof}
\begin{crl}
Suppose $f$ is a holomorphic tame function with finitely many critical points. Let $\alpha_1=A_1+\bar{\partial}_f\beta_1$, $\alpha_2=A_2+\bar{\partial}_f\beta_2$ be $\Delta_f$-harmonic forms, where $A_i\in\Omega^n/(df\wedge\Omega^{n-1})$ for $i=1, 2$. Then we have the following for some $C_n$ depending only on $n$:
$$\eta(\alpha_1, \alpha_2)=C_n{\rm Res}(A_1, A_2).$$
\end{crl}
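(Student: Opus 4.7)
The plan is to reduce the corollary to the main theorem of the appendix applied to the pair $(\alpha_1, *\alpha_2)$ rather than $(\alpha_1, \alpha_2)$. Using the relation $\tilde\eta(u, v) = (-1)^n \eta(u, *v)$ exploited in the proof of that theorem, together with $**=(-1)^n$ on $n$-forms of $\mathbb{C}^n$, we rewrite
\[
\eta(\alpha_1, \alpha_2) \;=\; \tilde\eta(\alpha_1, *\alpha_2) \;=\; \int_{\mathbb{C}^n} \alpha_1 \wedge *\alpha_2.
\]
The theorem will then deliver the result provided (i) $*\alpha_2$ is $\bar\partial_{-f}$-closed, (ii) its class in $\Omega^n/df\wedge\Omega^{n-1}$ is explicitly computable from $A_2$, and (iii) one of the two factors can be taken compactly supported.

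For (i), since $\alpha_2$ is $\Delta_f$-harmonic it is annihilated in particular by $\partial_f^\dag$. Combined with the K\"ahler-Hodge identity $\partial_f^\dag = -*\bar\partial_{-f}*$ from Lemma 2.5, this gives $\bar\partial_{-f}(*\alpha_2) = -*^{-1}\partial_f^\dag \alpha_2 = 0$. For (ii), write $\alpha_2 = A_2\varpi + \bar\partial_f\beta_2$ with $\varpi = dz_1\wedge\cdots\wedge dz_n$. Since $*$ preserves bidegree $(n,0)$ on the standard K\"ahler $\mathbb{C}^n$ via $*\varpi = c_n\varpi$ for an explicit constant, the $(n,0)$-component of $*\alpha_2$ equals $c_n$ times the $(n,0)$-component of $\alpha_2$. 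That $(n,0)$-component is $(A_2 + \sum_k B_k\partial_k f)\varpi$, where $\beta_2^{(n-1,0)} = \sum_k (-1)^{k-1}B_k\,dz_1\wedge\cdots\widehat{dz_k}\cdots\wedge dz_n$; the correction $\sum_k B_k\partial_k f$ lies in the Jacobian ideal. Hence $[*\alpha_2] = c_n[A_2]$ in $\Omega^{n,0}/df\wedge\Omega^{n-1,0}$.

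For (iii), following the proof of the theorem I would replace $\alpha_1$ by the compactly supported $\bar\partial_f$-cohomologous form $\alpha_1' = A_1(1-\rho)\varpi - a_1\bar\partial\rho\wedge\mu$, where $\rho$ is a standard cutoff and $\mu$ is the explicit solution of $\bar\partial_f\mu = \varpi$ already constructed. Writing $\alpha_1 - \alpha_1' = \bar\partial_f\gamma$ with $\gamma$ of polynomial growth, a direct calculation using $\bar\partial_{-f}(*\alpha_2) = 0$ gives $\bar\partial_f\gamma \wedge *\alpha_2 = \bar\partial(\gamma\wedge *\alpha_2)$; since $*\alpha_2$ decays exponentially at infinity while $\gamma$ grows only polynomially, Stokes' theorem yields $\int_{\mathbb{C}^n}\bar\partial_f\gamma\wedge *\alpha_2 = 0$, so $\int\alpha_1\wedge *\alpha_2 = \int\alpha_1'\wedge *\alpha_2$. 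Applying the main theorem of the appendix to the compactly supported $\alpha_1'$ (with representative $A_1$) and to $*\alpha_2$ (with Milnor representative $c_n A_2$) then gives $\int\alpha_1'\wedge *\alpha_2 = \operatorname{Res}_f(A_1, c_n A_2) = c_n\operatorname{Res}_f(A_1, A_2)$, so the corollary holds with $C_n = c_n$.

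The most delicate point is the growth control needed in step (iii): producing a representative $\beta_1$ of polynomial (rather than arbitrary) growth — which is what makes the boundary term vanish — is not part of the statement of Theorem 2.8 but should follow from the elliptic/spectral theory underlying $\Delta_f$ developed in \cite{Fan}. A secondary source of care is tracking the signs in the $(n,0)$-projection of $\bar\partial_f\beta_2$ and the normalization of the constant $c_n$.
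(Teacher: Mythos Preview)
Your approach is correct and reaches the same conclusion as the paper's, but the paper organizes the reduction slightly differently. Instead of verifying that $*\alpha_2$ is $\bar\partial_{-f}$-closed and then computing its Milnor class via the $(n,0)$-projection, the paper writes the splitting $*\alpha_2 = C_n A_2 + *\bar\partial_f\beta_2$ directly (using that $*$ acts on $(n,0)$-forms by the scalar $C_n$) and disposes of the second summand in one stroke via the adjoint relation: $\eta(\alpha_1,\bar\partial_f\beta_2)=\eta(\partial_f^{\dag}\alpha_1,\beta_2)=0$, using that $\alpha_1$ is $\Delta_f$-harmonic. This immediately reduces to $C_n\int_{\mathbb{C}^n}\alpha_1\wedge A_2$ with $A_2$ an explicit polynomial $(n,0)$-form, and the main theorem is then invoked. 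Your route --- keeping $*\alpha_2$ intact, replacing $\alpha_1$ by a compactly supported $\alpha_1'$, and appealing to exponential decay of $*\alpha_2$ for the Stokes argument --- is more explicit about the analytic justification, and you rightly flag the growth control on the primitive $\gamma$ as the delicate point. The paper's shortcut via the adjoint avoids having to identify the full $\bar\partial_{-f}$-class of $*\alpha_2$, but in exchange it applies the main theorem without explicitly verifying the compact-support hypothesis; your version makes that step honest at the cost of the extra growth estimate.
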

\begin{proof}
We have $$*\alpha_2=C_nA_2+*\bar{\partial}_f\beta_2.$$
Since $\alpha_1$ is a $\Delta_f$-harmonic form, using the above theorem, we have
\begin{displaymath}
\begin{aligned}
\eta(\alpha_1, \alpha_2)&=\eta(\alpha_1, A_2+\bar{\partial}_f\beta_2)\\
&=C_n\int_{\mathbb{C}^n}\bar{\partial}_f\beta_1\wedge A_2+\eta(\partial_f^{\dag}\alpha_2, \beta_2)\\
&=C_n{\rm Res}(A_1, A_2).
\end{aligned}
\end{displaymath}
\end{proof}

\section{Proof of the $tt^*$ equations of LG model}

Suppose $f(z)$ is a strongly tame holomorphic function on $\mathbb{C}^n$, $F(z, u)$ is a strong deformation of $f$ with deformation parameter space $M$. Then we have the Hodge bundle $\mathcal{H}\longrightarrow M$ with each fiber $\mathcal{H}_f$ equal to the space of $\Delta_f$-harmonic forms. Recall that in section 2.1, we have defined the following operators:
$$D_i=\Pi\circ\partial_i, \quad\bar{D}_{\bar{i}}=\Pi\circ\bar{\partial}_{\bar{i}}, \quad i=1,\cdot\cdot\cdot,s,$$
$$C_i=\Pi\circ\partial_if, \quad\bar{C}_{\bar{i}}=\Pi\circ\overline{\partial_if}, \quad i=1,\cdot\cdot\cdot,s.$$
In this appendix, we will prove the $tt^*$ equations:
\begin{thm}
The operators $D,\bar{D},C,\bar{C}$ satisfying the following relations: \\
(1) $[C_i,C_j]=[\bar{C}_{\bar{i}},\bar{C}_{\bar{j}}]=0.$\\
(2) $[D_i,\bar{C}_{\bar{j}}]=[\bar{D}_{\bar{i}},C_j]=0$.\\
(3) $[D_i,C_j]=[D_j,C_i], [\bar {D}_{\bar{i}},\bar {C}_{\bar{j}}]=[\bar {D}_{\bar{j}},\bar {C}_{\bar{i}}]$. \\
(4) $[D_i,D_j]=[\bar {D}_{\bar{i}},\bar{D}_{\bar{j}}]=0$, $[D_i,\bar{D}_{\bar{j}}]=-[C_i,\bar{C}_{\bar{j}}]$.\\
\end{thm}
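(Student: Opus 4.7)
My plan is to prove the four Cecotti--Vafa equations by direct calculation on $\Gamma(\mathcal{H})$, working inside the ambient trivial Hilbert bundle $L^2\Lambda^*(\mathbb{C}^n)\times M$ and descending to $\mathcal{H}$ via the harmonic projection $\Pi$. The unifying idea is that multiplication by a function holomorphic in $z$ commutes with $\bar{\partial}_F$, and symmetrically for antiholomorphic multipliers and $\partial_F$; combined with the Hodge decomposition $L^2\Lambda^*=\mathcal{H}\oplus\operatorname{im}\bar{\partial}_F\oplus\operatorname{im}\bar{\partial}_F^{\dag}$ from Theorem~2.6, this lets me collapse unwanted error terms to $\Pi$-exact ones. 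I would first assemble the elementary commutator calculus on the ambient bundle: (a) $[M_{\partial_iF},\bar{\partial}_F]=0$ and $[M_{\overline{\partial_iF}},\partial_F]=0$; (b) $\partial_i$ commutes with $\partial_F$, with $M_{\overline{\partial_jF}}$, and with $\bar{\partial}_F^{\dag}$ because all three depend only on $\bar F$, which is antiholomorphic in $u$, and symmetrically for $\bar{\partial}_{\bar j}$; (c) for $\alpha\in\Gamma(\mathcal{H})$, the form $M_{\partial_iF}\alpha$ is $\bar{\partial}_F$-closed, hence $M_{\partial_iF}\alpha-C_i\alpha\in\operatorname{im}\bar{\partial}_F$, with the conjugate statement for $\bar{C}_{\bar i}$.

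Equations (1) and (2) are then essentially formal consequences of this calculus. For (1): by (c), write $M_{\partial_jF}\alpha=C_j\alpha+\bar{\partial}_F\gamma$; applying $M_{\partial_iF}$ and using (a) gives $M_{\partial_iF}\bar{\partial}_F\gamma=\bar{\partial}_F(M_{\partial_iF}\gamma)$, which $\Pi$ kills, so $C_iC_j\alpha=\Pi(M_{\partial_iF}M_{\partial_jF}\alpha)$ is symmetric in $(i,j)$; the conjugate argument handles $[\bar{C}_{\bar i},\bar{C}_{\bar j}]=0$. For (2), I expand $D_i\bar{C}_{\bar j}-\bar{C}_{\bar j}D_i$ on $\alpha$: by (b), $\partial_i$ passes through $M_{\overline{\partial_jF}}$, and the remaining discrepancy is $\Pi\circ M_{\overline{\partial_jF}}\circ(1-\Pi)\partial_i\alpha$, whose argument decomposes via Hodge into a $\bar{\partial}_F$-exact and a $\bar{\partial}_F^{\dag}$-exact part. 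The $\bar{\partial}_F$-exact piece is absorbed by $M_{\overline{\partial_jF}}$ after rewriting $\bar{\partial}_F^{\dag}=i[\Lambda,\partial_F]$ via the K\"ahler--Hodge identity of Lemma~2.3 and using (a); the $\bar{\partial}_F^{\dag}$-exact piece is killed by the outer~$\Pi$.

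For equations (3) and (4), I would encode all four equations into the flatness $\nabla(\lambda)^2=0$ of the pencil $\nabla(\lambda)=D+\bar D+\lambda^{-1}C+\lambda\bar C$ on $\mathcal{H}$, whose Laurent expansion in $\lambda$ reproduces (1) at $\lambda^{\pm 2}$, (2) and (3) at $\lambda^{\pm 1}$, and (4) at $\lambda^0$. Flatness of the pencil will be deduced from the flatness of the ambient connection $d_M+\lambda^{-1}\partial F\wedge+\lambda\,\overline{\partial F}\wedge$ on the trivial $L^2$-bundle (verified from $d_M^2=0$, super-commutativity of the multipliers, and $d_M(\partial F)=\partial(d_MF)$) and then transported to $\mathcal{H}$ via $\Pi$, with the collapse lemma~(c) ensuring compatibility. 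The hard part is the $\lambda^0$-coefficient of (4), i.e.\ $[D_i,\bar{D}_{\bar j}]=-[C_i,\bar{C}_{\bar j}]$: here I expect to need a second-order computation relating $\partial_i\Pi$ to $(1-\Pi)\partial_i$ via the variational formula for $\Pi$ in terms of the Green operator $G_F$ of $\Delta_F$, and to use the K\"ahler--Hodge identities in an essential way to identify the residual $\bar{\partial}_F^{\dag}$-exact contributions with exactly $-[C_i,\bar{C}_{\bar j}]$; the rest of (3) and (4) should then follow by bookkeeping with the Hodge decomposition.
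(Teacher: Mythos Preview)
Your plan for (1) and (2) is essentially the paper's: write $M_{\partial_jF}\alpha = C_j\alpha + \bar{\partial}_F\gamma$, use $[M_{\partial_iF},\bar{\partial}_F]=0$ and $[\bar{\partial}_{\bar{i}},\bar{\partial}_F]=0$ to push unwanted terms into $\operatorname{im}\bar{\partial}_F$, and kill them with $\Pi$. Your explanation of (2) is slightly garbled --- there are two discrepancy terms, not one, and the K\"ahler--Hodge identities are unnecessary: what you actually need is that $(1-\Pi)\partial_i\alpha\in\operatorname{im}\bar{\partial}_F^{\dag}$ (immediately from $[\partial_i,\bar{\partial}_F^{\dag}]=0$ and $\bar\partial_F^{\dag}\alpha=0$) and that $M_{\overline{\partial_jF}}$ commutes with $\bar{\partial}_F^{\dag}$, so the result stays $\bar{\partial}_F^{\dag}$-exact and is annihilated by the outer $\Pi$.

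For (3) and (4) you diverge from the paper. The paper does not use the pencil $\nabla(\lambda)$; instead it first establishes explicit formulas on $\mathcal{H}$ (its Lemma~B.3),
\[
\partial_i=D_i+\partial_F\bar{\partial}_F^{\dag}G(\partial_iF\cdot),\qquad
\partial_iF=C_i+\bar{\partial}_F\bar{\partial}_F^{\dag}G(\partial_iF\cdot),
\]
together with their conjugates, and then computes every bracket by hand, tracking which pieces lie in $\operatorname{im}\bar{\partial}_F$ or $\operatorname{im}\partial_F$. Your pencil repackaging is fine as bookkeeping, but the claim that flatness of the ambient connection can be ``transported to $\mathcal{H}$ via $\Pi$, with the collapse lemma~(c) ensuring compatibility'' is the real gap. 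Projection of a flat connection to a non-parallel subbundle is not flat; lemma~(c) only controls the error term for $C$, not for $D$, and in particular it says nothing about how $\partial_i$ interacts with the $\bar{\partial}_F$-exact correction $\bar\partial_F\gamma$ in $M_{\partial_jF}\alpha-C_j\alpha$. If you attempt (3) without the Green-operator formula you are left with a residual term of the shape $\Pi\bigl((\partial_iF)\,\partial_F\gamma\bigr)$ that does not obviously vanish or symmetrize in $(i,j)$. You correctly anticipate needing the variational formula for $\Pi$ in terms of $G_F$ for the $\lambda^0$-part of (4), but you will need it already for (3); once you have the analogue of Lemma~B.3 in hand, the pencil framing buys nothing over the paper's direct computation.
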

We follow the exposition in \cite{T}. First we cite some commutation formulas from \cite{Fan}:
\begin{lm}
$$[\partial_i, \bar{\partial}_f]=\partial(\partial_if)\wedge,\quad[\partial_i, \bar{\partial}_f^{\dag}]=0,$$
$$[\partial_{\bar{i}}, \bar{\partial}_f]=0,\quad [\partial_{\bar{i}}, \bar{\partial}_f^{\dag}]=\overline{\partial_if_a}(dz^a\wedge)^{\dag},$$
$$[\partial_i, \Delta_f]=[\partial(\partial_if)\wedge, \bar{\partial}_f^{\dag}],\quad [\partial_i, G]=-G(\partial_i\Pi+[\partial_i, \Delta_f]\circ G), $$
$$[\partial_{\bar{i}}, \Delta_f]=[\overline{\partial(\partial_if)\wedge}, \partial_f^{\dag}],\quad[\partial_{\bar{i}}, G]=-G(\partial_{\bar{i}}\Pi+[\partial_{\bar{i}}, \Delta_f]\circ G).$$
\end{lm}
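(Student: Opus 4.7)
The plan is to treat the four identities in the first two displayed lines as direct coordinate computations, deduce the $[\partial_i,\Delta_f]$ identity by graded Leibniz, and finally derive the Green-operator identity by differentiating the defining equation of $G$. First, write $\bar\partial_f=\bar\partial+\partial f\wedge$ and, from the definition $\bar\partial_f^{\dag}=-*\partial_{-f}*$ on flat $\mathbb{C}^{n+2}$, unpack $\bar\partial_f^{\dag}=\bar\partial^{\dag}+\overline{\partial_a f}\,(dz^a\wedge)^{\dag}$. Since $\partial_i=\partial/\partial u_i$ commutes with every $z$-derivative and only differentiates the $u$-dependent coefficients, each commutator in the first two lines is a one-line calculation: $[\partial_i,\partial f\wedge]=\partial(\partial_i f)\wedge$; $\partial_i(\overline{\partial_a f})=0$ because the deformation $F(z,u)$ is holomorphic in $u$, giving $[\partial_i,\bar\partial_f^{\dag}]=0$; symmetrically, $\partial_{\bar i}(\partial_a f)=0$ yields $[\partial_{\bar i},\bar\partial_f]=0$; and $\partial_{\bar i}\overline{\partial_a f}=\overline{\partial_i\partial_a f}=\overline{\partial_i f_a}$ produces the fourth identity.

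Second, expand $\Delta_f=\bar\partial_f\bar\partial_f^{\dag}+\bar\partial_f^{\dag}\bar\partial_f$ and apply the graded Leibniz rule for the commutator with $\partial_i$. Three of the four terms vanish thanks to $[\partial_i,\bar\partial_f^{\dag}]=0$, leaving
\[
[\partial_i,\Delta_f]=(\partial(\partial_i f)\wedge)\,\bar\partial_f^{\dag}+\bar\partial_f^{\dag}\,(\partial(\partial_i f)\wedge)=[\partial(\partial_i f)\wedge,\bar\partial_f^{\dag}],
\]
which is the anticommutator of two odd operators, as asserted. The $\partial_{\bar i}$ version is the analogous computation in which the only nonzero input is $[\partial_{\bar i},\bar\partial_f^{\dag}]=\overline{\partial_i f_a}(dz^a\wedge)^{\dag}$; the result is repackaged as $[\,\overline{\partial(\partial_i f)\wedge},\partial_f^{\dag}\,]$ by conjugating using the symmetry between $(\partial_f,\partial_f^{\dag})$ and $(\bar\partial_f,\bar\partial_f^{\dag})$ that is already recorded in Lemma 2.2.

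Third, start from the defining identities $G\Delta_f+\Pi=I$ and $G\Pi=\Pi G=0$ on $\Dom(\Delta_f)$, which are valid because strong tameness yields a purely discrete spectrum with a spectral gap and makes $G_u,\Pi_u$ smooth in $u$ by standard analytic perturbation theory. Differentiating $G\Delta_f=I-\Pi$ along $\partial_i$ yields
\[
(\partial_iG)\Delta_f=-\partial_i\Pi-G\,[\partial_i,\Delta_f].
\]
Composing on the right with $G$, using $\Delta_fG=I-\Pi$ together with the consequence $(\partial_iG)\Pi=-G(\partial_i\Pi)$ obtained by differentiating $G\Pi=0$, one extracts $[\partial_i,G]=-G(\partial_i\Pi+[\partial_i,\Delta_f]\circ G)$; the $\partial_{\bar i}$ statement is produced identically.

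The main obstacle is neither algebraic nor combinatorial but functional-analytic: one must justify that $G_u$ is differentiable as an operator-valued function of $u$ in a topology strong enough that the Leibniz rule and the right-composition manipulations remain valid when the operators in question (such as $\partial(\partial_i f)\wedge\bar\partial_f^{\dag}$) are unbounded. This is precisely where strong tameness of $F$ enters, through the spectral-gap and eigenbasis theorems of \cite{Fan}; once those are in force, each identity reduces to bookkeeping that I would carry out on the discrete eigenspace decomposition and then sum.
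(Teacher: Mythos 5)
The paper does not prove this lemma at all --- it is introduced with ``we cite some commutation formulas from \cite{Fan}'' --- so you are supplying an argument where the paper supplies only a reference; there is no in-paper proof to match against. That said, your treatment of the first two displayed lines is correct: writing $\bar\partial_f=\bar\partial+\partial f\wedge$ and $\bar\partial_f^{\dag}=\bar\partial^{\dag}+\overline{\partial_a f}\,(dz^a\wedge)^{\dag}$, each bracket reduces to differentiating the coefficients $\partial_aF$ and $\overline{\partial_aF}$ in $u$, and holomorphy of $F$ in $u$ kills exactly the two that are claimed to vanish. The identity $[\partial_i,\Delta_f]=[\partial(\partial_if)\wedge,\bar\partial_f^{\dag}]$ then follows by Leibniz as you say (two, not three, of the four terms vanish, but the conclusion stands). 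For the conjugate identity, ``repackaging by symmetry'' is too quick: the literal analogue of your computation expresses $[\partial_{\bar i},\Delta_f]$ as the anticommutator of $\overline{\partial_if_a}(dz^a\wedge)^{\dag}$ with $\bar\partial_f$, which is not visibly the stated bracket. The clean route is to use the other expression $\Delta_f=[\partial_f,\partial_f^{\dag}]$ recorded in Lemma 2.2, together with $[\partial_{\bar i},\partial_f]=\overline{\partial(\partial_if)}\wedge$ and $[\partial_{\bar i},\partial_f^{\dag}]=0$; that lands directly on $[\overline{\partial(\partial_if)\wedge},\partial_f^{\dag}]$.

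The genuine gap is in the Green-operator identity. Carrying out exactly the manipulations you describe --- differentiate $G\Delta_f=I-\Pi$, right-compose with $G$, use $\Delta_fG=I-\Pi$ and $[\partial_i,G]\Pi=-G(\partial_i\Pi)$ --- one obtains
\[
[\partial_i,G]=-G(\partial_i\Pi)-(\partial_i\Pi)G-G[\partial_i,\Delta_f]G,
\]
which carries the extra term $-(\partial_i\Pi)G$ relative to the stated formula. (Differentiating $\Delta_fG=I-\Pi$ and left-composing with $G$ gives the same conclusion in the form $(I-\Pi)[\partial_i,G]=-G(\partial_i\Pi+[\partial_i,\Delta_f]\circ G)$ together with $\Pi[\partial_i,G]=-(\partial_i\Pi)G$.) Since $\Pi G=0$ forces $(\partial_i\Pi)G=\Pi(\partial_i\Pi)G$, the discrepancy is precisely the harmonic component $\Pi[\partial_i,G]$, and it does not vanish by pure algebra with the relations $G\Delta_f=\Delta_fG=I-\Pi$ and $G\Pi=\Pi G=0$. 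So either you must separately prove $(\partial_i\Pi)\circ G=0$, equivalently $\Pi\circ\partial_i\circ G=0$ on $L^2\Lambda^*$ --- an additional Hodge-theoretic input your sketch does not address --- or the identity should be asserted only after composing with $I-\Pi$ on the left. As written, ``one extracts'' conceals a dropped term. Your closing functional-analytic caveats about differentiability of $G_u$ are appropriate but remain a promissory note rather than an argument.
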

Then we can derive the expressions for $D$ and $C$:
\begin{lm}The following holds on the Hodge bundle $\mathcal{H}_f$.
$$\partial_i=D_i+\partial_f\bar{\partial}_f^{\dag}G(\partial_if),\quad \partial_{\bar{i}}=D_{\bar{i}}+\bar{\partial}_f\partial_f^{\dag}G(\overline{\partial_if}),$$
$$\partial_if=C_i+\bar{\partial}_f\bar{\partial}_f^{\dag}G(\partial_if),\quad \overline{\partial_if}=C_{\bar{i}}+\partial_f\partial_f^{\dag}G(\overline{\partial_if}).$$
\end{lm}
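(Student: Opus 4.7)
The strategy is uniform across all four formulas: for each one, apply a Hodge decomposition of $L^2\Lambda^*$ (either via $\bar\partial_F$ or via $\partial_F$, both producing the same harmonic bundle $\mathcal{H}$), then use the commutation identities of Lemma B.2 together with the K\"ahler--Hodge identities (Lemma 2.3) to show that one of the two $\Delta_F$-exact components vanishes on a harmonic section $s \in \Gamma(\mathcal{H})$, leaving $D_i s$ (respectively $C_i s$) plus a single surviving piece that can be massaged into the claimed form. Throughout I will write $F = f + \sum u_j\phi_j$ so that $\partial_i F = \phi_i$ is holomorphic in $z$.

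For the $C_i$ formula, note first that $(\partial_i F)s$ is $\bar\partial_F$-closed, because $\partial_i F$ is holomorphic in $z$ and $\bar\partial_F s = 0$, so by Leibniz $\bar\partial_F((\partial_i F)s) = (\partial_i F)\bar\partial_F s = 0$. The $\bar\partial_F$-Hodge decomposition of $(\partial_i F)s$ then reads $C_i s + \bar\partial_F\bar\partial_F^\dag G((\partial_i F)s) + \bar\partial_F^\dag\bar\partial_F G((\partial_i F)s)$, and $[\bar\partial_F,G]=0$ kills the last term. The formula for $\overline{\partial_i F}$ is identical after swapping $\bar\partial_F \leftrightarrow \partial_F$, using the fact that $\partial_F(\overline{\partial_i F}\cdot s) = 0$ (which holds because $\overline{\partial_i F}$ is antiholomorphic in $z$ and the K\"ahler--Hodge identities force $\partial_F s = 0$ on $\Delta_F$-harmonic forms).

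For the $D_i$ formula, apply the $\bar\partial_F$-Hodge decomposition to $\partial_i s$. The $\bar\partial_F$-exact half vanishes: $[\partial_i,\bar\partial_F^\dag]=0$ and $\bar\partial_F^\dag s = 0$ give $\bar\partial_F^\dag G\partial_i s = G\partial_i\bar\partial_F^\dag s = 0$. The $\bar\partial_F^\dag$-exact half is $\bar\partial_F^\dag\bar\partial_F G\partial_i s$; the identity $[\partial_i,\bar\partial_F] = \partial(\partial_i F)\wedge$ combined with $\bar\partial_F s = 0$ yields $\bar\partial_F\partial_i s = -\partial(\partial_i F)\wedge s = -\partial_F((\partial_i F)s)$, where the last equality uses $\partial_F s = 0$. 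The commutation chain $[\bar\partial_F,G]=[\partial_F,G]=[\partial_F,\bar\partial_F^\dag]=0$ then converts this into $\partial_F\bar\partial_F^\dag G((\partial_i F)s)$. The conjugate formula for $\partial_{\bar i}$ is proved by the $\partial_F$-Hodge decomposition instead: the vanishing half rests on the observation that $\partial_F^\dag = -*\bar\partial_{-F}*$ depends only on $F$ (through $\partial F$), not on $\bar F$, so $[\partial_{\bar i},\partial_F^\dag]=0$ and $\partial_F^\dag G\partial_{\bar i}s = G\partial_F^\dag\partial_{\bar i}s = 0$; the surviving half uses $[\partial_{\bar i},\partial_F] = \bar\partial(\overline{\partial_i F})\wedge$ together with the Leibniz identity $\bar\partial(\overline{\partial_i F})\wedge s = \bar\partial_F(\overline{\partial_i F}\cdot s)$ on harmonic $s$, producing $\bar\partial_F\partial_F^\dag G(\overline{\partial_i F}\cdot s)$ after commuting $G$ through.

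The only real obstacle is notational: the apparent ``swap'' in the first formula --- a $\bar\partial_F$-Hodge decomposition producing an outside $\partial_F$ --- is exactly what $[\partial_F,\bar\partial_F^\dag]=0$ is for, together with the parameter commutator $[\partial_i,\bar\partial_F] = \partial(\partial_i F)\wedge$ and the identification $\partial(\partial_i F)\wedge s = \partial_F((\partial_i F)s)$ valid on harmonic sections. Keeping track of signs and verifying that $G$ commutes with all four of $\partial_F,\bar\partial_F,\partial_F^\dag,\bar\partial_F^\dag$ (a consequence of their commutation with $\Delta_F$ and the definition of $G$ as the inverse of $\Delta_F$ on the orthogonal complement of $\mathcal{H}$) is the remaining bookkeeping.
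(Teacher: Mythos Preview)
Your proof is correct and follows essentially the same route as the paper's. Both arguments rest on the Hodge identity $I=\Pi+\Delta_F G$, the commutation formulas of Lemma~B.2, and the observation $\partial(\partial_iF)\wedge s=\partial_F((\partial_iF)s)$ on harmonic $s$; the only cosmetic difference is that the paper applies the single packaged commutator $[\partial_i,\Delta_f]=[\partial(\partial_if)\wedge,\bar\partial_f^\dag]$ to $G\Delta_f\partial_i\alpha$ in one step, whereas you split $\Delta_F G=\bar\partial_F\bar\partial_F^\dag G+\bar\partial_F^\dag\bar\partial_F G$ and kill one summand and rewrite the other using $[\partial_i,\bar\partial_F^\dag]=0$ and $[\partial_i,\bar\partial_F]=\partial(\partial_iF)\wedge$ separately.
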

\begin{proof}
We only prove the first one, the rest are similar. From Lemma B.2 and the Hodge identity
$$I=\Pi+G\Delta_f=\Pi+\Delta_f G,$$
we can calculate:
\begin{displaymath}
\begin{aligned}
D_i\alpha&=\Pi\partial_i\alpha=\partial_i\alpha-G\Delta_f\partial_i\alpha=\partial_i\alpha-G[\Delta_f, \partial_i]\alpha\\
&=\partial_i\alpha-G[\partial(\partial_if)\wedge, \bar{\partial}_f^{\dag}]\alpha=\partial_i\alpha+G\bar{\partial}_f^{\dag}(\partial(\partial_if)\wedge\alpha)\\
&=\partial_i\alpha+G\bar{\partial}_f^{\dag}\partial_f((\partial_if)\alpha)=\partial_i\alpha-\partial_f\bar{\partial}_f^{\dag}G((\partial_if)\alpha).
\end{aligned}
\end{displaymath}
Here we have used $\partial(\partial_if)\wedge\alpha=\partial_f((\partial_if)\alpha)$, which is because of $\bar{\partial}_f\alpha=0$.
\end{proof}
We begin the proof of Theorem B.1. For simplicity, we use $\bar{\partial}_f\gamma$ (or $\partial_f\gamma$) to denote any element that we need in im$\bar{\partial}_f$ (or im$\partial_f$). For any $\alpha\in\mathcal{H}^n_f$, to show\\
(1) [$C_i,C_j$]=0:
\begin{displaymath}
\partial_if\partial_jf\alpha=\partial_if(C_j\alpha+\bar{\partial}_f\gamma)=C_iC_j\alpha+\bar{\partial}_f\gamma.
\end{displaymath}
Where we used the relation $[\partial_if,\bar{\partial}_f]$=0. Similarly,
$$\partial_jf\partial_if\alpha=C_jC_i\alpha+\bar{\partial}_f\gamma.$$
Thus $C_iC_j\alpha=C_jC_i\alpha$. The proof of $[\bar{C}_{\bar{i}},\bar{C}_{\bar{j}}]=0$ is similar.\\
(2) [$\bar{D}_{\bar {i}},C_j$]=0: 
\begin{displaymath}
\begin{aligned}
\bar{\partial}_{\bar{i}}C_j\alpha&=\bar{\partial}_{\bar{i}}(\partial_jf\alpha+\bar{\partial}_{f}\gamma)\\
&=\partial_jf\bar{\partial}_{\bar{i}}\alpha+\bar{\partial}_{f}\gamma=\partial_jf(\bar{D}_{\bar{i}}\alpha+\bar{\partial}_{f}\gamma)+\bar{\partial}_{f}\gamma\\
&=\partial_jf\bar{D}_{\bar{i}}\alpha+\bar{\partial}_{f}\gamma=C_j\bar{D}_{\bar{i}}\alpha+\bar{\partial}_f\gamma
\end{aligned}
\end{displaymath}
Here we used $[\partial_{\bar{i}},\bar{\partial}_f]=0$ and $[\partial_if,\bar{\partial}_f]$=0. 
Take the projection on both sides,  we have $\Pi(\bar{\partial}_{\bar{i}}C_j\alpha)=C_j\bar{D}_{\bar{i}}\alpha.$
On the other hand, recall that $\bar{D}_{\bar{i}}=\Pi\circ\bar{\partial}_{\bar{i}}$ ,we have $\bar{D}_{\bar{i}}C_j\alpha=\Pi(\bar{\partial}_{\bar{i}} C_j\alpha)$.
Thus [$\bar{D}_{\bar {i}},C_j$]=0. Similarly $[D_i,\bar{C}_{\bar{j}}]=0$.\\
(3) $[D_i,C_j]=[D_j,C_i]$:
\begin{displaymath}
\begin{aligned}
\partial_iC_j\alpha&=\partial_i(\partial_jf\alpha-\bar{\partial}_f\bar{\partial}^\dag_fG(\partial_jf)\alpha)\\
&=(\partial_i\partial_jf)+\partial_jf\partial_i\alpha+\bar{\partial}_f\gamma-\partial_i(\partial f\wedge)\bar{\partial}^\dag_fG(\partial_jf)\alpha\\
&=(\partial_i\partial_jf)+\partial_jf(D_i\alpha+\partial_f\bar{\partial}^\dag_fG(\partial_if)\alpha)+\bar{\partial}_f\gamma-\partial_i(\partial f\wedge)\bar{\partial}^\dag_fG(\partial_jf)\alpha\\
&=(\partial_i\partial_jf)+\partial_jfD_i\alpha-\partial_j(\partial f\wedge)\bar{\partial}^\dag_fG(\partial_if)\alpha+\partial_f\gamma+\bar{\partial}_f\gamma-\partial_i(\partial f\wedge)\bar{\partial}^\dag_fG(\partial_jf)\alpha
\end{aligned}
\end{displaymath}
So we have
\begin{displaymath}
D_iC_j\alpha=C_jD_i\alpha+\Pi(\partial_i\partial_jf-\partial_j(\partial f\wedge)\bar{\partial}^\dag_fG(\partial_if)\alpha-\partial_i(\partial f\wedge)\bar{\partial}^\dag_fG(\partial_jf)\alpha)
\end{displaymath}

Notice that the index $i$ and $j$ in the second term are symmetric, we have proved that $[D_i,C_j]\alpha=[D_j,C_i]\alpha$. Similarly, $[\bar {D}_{\bar{i}},\bar {C}_{\bar{j}}]=[\bar {D}_{\bar{j}},\bar {C}_{\bar{i}}]$.\\
(4) [$D_i,D_j$]=0:
\begin{displaymath}
\begin{aligned}
\partial_iD_j\alpha&=\partial_i(\partial_j\alpha-\partial_f\bar{\partial}^\dag_fG(\partial_jf)\alpha)\\
&=\partial_i\partial_j\alpha-\partial_f\gamma \qquad([\partial_i,\partial_f]=0.)\\
\end{aligned}
\end{displaymath}
Thus we have
\begin{displaymath}
D_iD_j(\alpha)=\Pi(\partial_i\partial_j\alpha)=D_jD_i(\alpha),
\end{displaymath}
which means [$D_i,D_j$]=0. Similarly $[\bar {D}_{\bar{i}},\bar{D}_{\bar{j}}]=0.$\\
Now the only thing left is to show $[D_i,\bar{D}_{\bar{j}}]=-[C_i,\bar{C}_{\bar{j}}]$.\\
\begin{displaymath}
\begin{aligned}
\partial_i\bar{D}_{\bar{j}}\alpha&=\partial_i(\bar{\partial}_{\bar{j}}\alpha-\bar{\partial}_f\partial^\dag_fG(\overline{\partial_jf}\alpha))\\
&=\partial_i\bar{\partial}_{\bar{j}}\alpha-\bar{\partial}_f\gamma-(\partial_i(\partial f))\partial^\dag_fG(\overline{\partial_jf}\alpha),
\end{aligned}
\end{displaymath}
so we have
\begin{displaymath}
D_i\bar{D}_{\bar{j}}\alpha=\Pi(\partial_i\bar{\partial}_{\bar{j}}\alpha-(\partial_i(\partial f\wedge))\partial^\dag_fG\overline{\partial_jf}\alpha).
\end{displaymath}
Similiarly
\begin{displaymath}
\bar{D}_{\bar{j}}D_i\alpha=\Pi(\bar{\partial}_{\bar{j}}\partial_i\alpha-(\bar{\partial}_{\bar{j}}(\overline{\partial f}\wedge))\bar{\partial}^\dag_fG\partial_if\alpha).
\end{displaymath}
On the other hand, we have
\begin{displaymath}
\begin{aligned}
\partial_if\bar{C}_{\bar{j}}\alpha&=\partial_if(\overline{\partial_jf}\alpha-\partial_f\partial^\dag_fG\overline{\partial_jf}\alpha)\\
&=\partial_if\overline{\partial_jf}\alpha-\partial_f\gamma+(\partial_i(\partial f\wedge))\partial^\dag_fG\overline{\partial_jf}\alpha.
\end{aligned}
\end{displaymath}
Where we have used $[\partial_if, \partial_f]=-\partial(\partial_if)\wedge.$ Thus
\begin{displaymath}
C_i\bar{C}_{\bar{j}}\alpha=\Pi(\partial_if\overline{\partial_jf}\alpha+(\partial_i(\partial f\wedge))\partial^\dag_fG\overline{\partial_jf}\alpha)
\end{displaymath}
and similarly
\begin{displaymath}
\bar{C}_{\bar{j}}C_i\alpha=\Pi(\overline{\partial_jf}\partial_if\alpha+(\bar{\partial}_{\bar{j}}(\overline{\partial f})\wedge)\bar{\partial}^\dag_fG(\partial_if)\alpha).
\end{displaymath}
Now it is obvious that $[D_i,\bar{D}_{\bar{j}}]=-[C_i,\bar{C}_{\bar{j}}]$. We have proved Theorem B.1.
\begin{rem}The $tt^*$ equations are the generalization of the special geometry relations on Calabi-Yau threefolds \cite{St}, which are famous as the genus 0 anomaly equations in \cite{BCOV}. These equations are equivalent to that the connection $\nabla=D+\bar{D}+C_idu^i+\bar{C}_{\bar{i}}du^{\bar{i}}$ on the Hodge bundle is flat. 
\end{rem}

\section{Calculation of monodromy of Gauss-Manin connection}

Assume that $f:\mathbb{C}^{n+2}\longrightarrow\mathbb{C}$ is a non-degenerate quasi-homogenous polynomial.\par
Let's consider the singularity theory of $(f,\mathbb{C}^{n+2},0)$. Let $\Delta$ be a small disc near the origin 0, and $\Delta^*=\Delta-\{0\}$. $f$ gives the Milnor fibration $f^{-1}(\Delta^*)\longrightarrow \Delta^*$. Assocaite each $t\in\Delta^*$ with the cohomology group $H^{n+1}(f_t)$, we get the flat vector bundle $H\longrightarrow \Delta^*$ equipped with the Gauss-Manin conncetion. To calculate the monodromy, we should introduce the Brieskorn lattice. For more details about the Brieskorn lattice, see \cite{B} or \cite{Het2}.\par
We know that the non-singular hypersurfaces $f_t$ are stein manifolds, so every cohomology class in $H^{n+1}(f_t)$ can be represented by a holomorphic (n+1)-form on it.\par
There are two ways to get holomorphic (n+1)-forms on the non-singular hypersurfaces $\{f_t\}_{t\in \Delta^*}$. One way is to restrict a holomorphic (n+1)-form on $\mathbb{C}^{n+2}$ to each $f_t$. This gives a subspace of the space of holomrophic sections of $H\longrightarrow\Delta^*$, we shall denote this subspace by $H_f^{\prime}$. \par
Another way is to take the Gelfand-Leray form of a holomorphic $(n+2)$-form. Given a $(n+2)$-form $\omega$ in $\mathbb{C}^{n+2}$, the Gelfand-Leray form of $\omega$ is a holomorphic form in  $H^{n+1}(f_t)$ defined as follow
\begin{displaymath}
\psi(\omega)=\frac{\omega}{df}
\end{displaymath}
As the non-singular hypersurface $f_t$ is given by a regular value $t\in\mathbb{C}^*$, we msut have $\omega=df\wedge \theta$ in a neighborhood of $f_t$, and we define $\frac{\omega}{df}$ to be $\theta\mid_{f_t}$. The restriction is independent of the choice of the neighborhood and $\theta$, so $\frac{\omega}{df}$ is a well-defined holomorphic (n+1)-form on $f_t$.\par
Taking the Gelfand-Leray forms defines a subspace of the space of holomorphic forms $H\longrightarrow\Delta^*$. We shall denote this subspace by $H_f^{\prime\prime}$.\par 
We have the following result (see \cite{Seb} and \cite{Het2}):
\begin{thm}
$H_f^{\prime}$ and $H_f^{\prime\prime}$ are both free $\mathcal{O}_{\Delta}$-modules of rank $\mu$. Restrict to the germ at 0$\in\Delta$, we have
\begin{displaymath}
H_{f,0}^{\prime}\cong\Omega^{n+1}_{\mathbb{C}^{n+2},0}/(df\wedge\Omega^n_{\mathbb{C}^{n+2},0}+d\Omega^n_{\mathbb{C}^{n+2},0})
\end{displaymath}
\begin{displaymath}
H_{f,0}^{\prime\prime}\cong\Omega^{n+2}_{\mathbb{C}^{n+2},0}/df\wedge d\Omega^n_{\mathbb{C}^{n+2},0}
\end{displaymath}
And there is a natural embedding $H_f^{\prime}\hookrightarrow H_f^{\prime\prime}$, given by
\begin{displaymath}
[\omega]\longrightarrow [df\wedge\omega]
\end{displaymath}
Consider $H_{f,0}^{\prime}$ as a sub-module of $H_{f,0}^{\prime\prime}$, we have
\begin{displaymath}
H_{f,0}^{\prime\prime}/H_{f,0}^{\prime}\cong\Omega^{n+2}_{\mathbb{C}^{n+2},0}/df\wedge\Omega^{n+1}_{\mathbb{C}^{n+2},0}\cong R_f
\end{displaymath}
\end{thm}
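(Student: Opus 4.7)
The plan is to handle the four assertions in turn: (a) the stalk identifications of $H_{f,0}'$ and $H_{f,0}''$ with the stated quotients, (b) $\mathcal{O}_{\Delta,0}$-freeness of rank $\mu$, (c) the embedding $H_f' \hookrightarrow H_f''$, and (d) the cokernel computation. The two natural maps
\begin{align*}
\Omega^{n+1}_{\mathbb{C}^{n+2},0} &\longrightarrow H_{f,0}', \quad \omega \longmapsto \bigl(t \mapsto [\omega|_{f_t}]\bigr),\\
\Omega^{n+2}_{\mathbb{C}^{n+2},0} &\longrightarrow H_{f,0}'', \quad \omega \longmapsto \bigl(t \mapsto [\omega/df]\bigr),
\end{align*}
are surjective essentially by the definitions of $H_f'$ and $H_f''$: every cohomology class on the Stein fiber $f_t$ is representable by a holomorphic form that extends holomorphically to a neighborhood in $\mathbb{C}^{n+2}$.

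For (a), the containments ``$\supseteq$'' of the claimed ideals in the kernels are immediate: $df \wedge \eta$ restricts to zero on every fiber, $d\theta$ restricts to $d(\theta|_{f_t})$, and the Gelfand--Leray form of $df \wedge d\theta$ equals $\pm d\theta|_{f_t}$, all representing $0$ in cohomology. The reverse containments are the Koszul--de~Rham lemma for the sequence $\partial_0 f,\dots,\partial_{n+1}f$, which is regular by non-degeneracy of $f$. Away from $0$ the lemma forces $\omega = df \wedge \eta$ once $\omega|_{f_t} \equiv 0$, with the ambiguity of an exact form $d\theta$; extension of the local conclusion across the isolated singularity uses Hartogs-type coherence. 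For (b), over $\Delta^*$ the sheaves in question are the cohomology sheaves of the Milnor fibration, locally free of rank equal to the Milnor number $\mu$; coherence across $0$ follows from Grauert's direct image theorem applied in a Milnor ball, and since $\mathcal{O}_{\Delta,0}$ is a discrete valuation ring, any torsion-free coherent module of generic rank $\mu$ is automatically free of rank $\mu$.

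For (c)--(d), the map $[\omega] \mapsto [df \wedge \omega]$ is well-defined because $df \wedge (df \wedge \eta) = 0$ and $df \wedge d\theta$ already belongs to $df \wedge d\Omega^n$; injectivity is another Koszul argument, since $df \wedge \omega \in df \wedge d\Omega^n$ implies $\omega - d\theta \in \ker(df \wedge)$, which in the relevant degree is $df \wedge \Omega^n$. The cokernel is
\begin{equation*}
\Omega^{n+2}/(df \wedge d\Omega^n + df \wedge \Omega^{n+1}) = \Omega^{n+2}/(df \wedge \Omega^{n+1}),
\end{equation*}
and the tautological $g\, dz_0 \wedge \cdots \wedge dz_{n+1} \mapsto [g]$ identifies this with $R_f$, since $df \wedge \Omega^{n+1}$ is precisely the Jacobian ideal times the top form. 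The main obstacle is the reverse containment in (a): globalizing the regular-point Koszul exactness of $df \wedge \cdot$ across the isolated singular origin. The isolatedness of the critical point enters decisively here, and one ultimately invokes the classical Brieskorn--Sebastiani theorems cited in the references.
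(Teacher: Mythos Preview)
The paper does not give a proof of this theorem; it is stated with the parenthetical ``(see \cite{Seb} and \cite{Het2})'' and used as a black box from the classical Brieskorn--Sebastiani theory. So there is no ``paper's own proof'' to compare against beyond those references.

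Your outline is the correct skeleton of the classical argument, and parts (c) and (d) are handled cleanly. Two points deserve comment. First, in (b) you pass from ``coherent of generic rank $\mu$'' to ``free of rank $\mu$'' via the DVR structure of $\mathcal{O}_{\Delta,0}$, but this requires torsion-freeness, which you never establish. Grauert's direct image theorem gives coherence, not torsion-freeness; the latter is precisely the content of Sebastiani's 1970 paper \cite{Seb} (confirming Brieskorn's conjecture), and is the genuinely nontrivial analytic input. Second, your final sentence concedes that the reverse containment in (a) ``ultimately invokes the classical Brieskorn--Sebastiani theorems cited in the references,'' which is the theorem you are trying to prove. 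So the proposal is not so much a proof as a correct reduction of the statement to its known hard core, followed by a citation of that core --- which is exactly what the paper itself does, just more tersely.
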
 
When $f=F$ is a holomorphic function with the only critical point at the origin 0, by the analytic version of Nullstellensatz, we know that there exist a positive integer $\kappa_F$ such that $F^{\kappa_F}\in (\partial_iF)$. For an arbitrary element $[\omega]\in H_{F,0}^{\prime\prime}$, we have $z^{\kappa_F}[\omega]=[F^{\kappa_F}\omega]\in H_{F,0}^{\prime}$. If $F$ is a quasi-homogeneous polynomial, we can take $\kappa_F=1$. In this case $zH_{F,0}^{\prime\prime}\subseteq  H_{F,0}^{\prime}$, but we know from the above theorem that $H_{F,0}^{\prime\prime}$ is a free $\mathcal{O}_{\Delta,0}$ module of rank $\mu$ and $H_{F,0}^{\prime\prime}/H_{F,0}^{\prime}\cong\Omega^{n+2}_{\mathbb{C}^{n+2},0}/dF\wedge\Omega^{n+1}_{\mathbb{C}^{n+2},0}\cong R_F$ has complex dimension $\mu$, so we must have $zH_{F,0}^{\prime\prime}=H_{F,0}^{\prime}$.\par
We have the following theorem about the action of Gauss-Manin connection on a holomorphic section from $H_F^{\prime}$ \cite{B}.
\begin{thm}
Let $F:\mathbb{C}^{n+2}\longrightarrow\mathbb{C}$ be a holomorphic function with the only critical point at the origin 0. Suppose $\omega$ is a holomorphic (n+1)-form near the origin 0$\in\mathbb{C}^{n+2}$, and $[\omega]$ is the holomorphic section of the flat bundle $H\longrightarrow\Delta^*$ by restricting $\omega$ to non-singular hypersurfaces $\{F_t\}_{t\in\Delta^*}$. Then the action of the Gauss-Manin connection on $[\omega]$ has the form
\begin{displaymath}
\nabla_t[\omega]=[\frac{d\omega}{dF}].
\end{displaymath}
\end{thm}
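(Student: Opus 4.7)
The strategy is to realize the Gauss-Manin connection explicitly via a smooth trivialization of the Milnor fibration, and then identify the resulting variation with the Gelfand-Leray form using Cartan's magic formula. The first step is to construct a vector field $V$ on $F^{-1}(\Delta^*)$ lifting the base vector field $\partial/\partial t$, i.e.\ with $dF(V)=\partial/\partial t$. Such a $V$ exists because $dF$ is nowhere zero on $F^{-1}(\Delta^*)$, so one may take local holomorphic splittings of $dF$ and patch them (either by working locally and using that cohomology of the Stein fibers can be computed locally, or by a smooth partition of unity). The flow of $V$ produces diffeomorphisms between nearby fibers, and transporting a cycle $\gamma\subset F_{t_0}$ along the flow yields a horizontal family of cycles $\gamma_t\subset F_t$ with respect to the flat structure on $H\to\Delta^*$.

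Under this trivialization, $\nabla_t[\omega|_{F_t}]$ is represented by $\mathcal{L}_V\omega\bigr|_{F_t}$, which by Cartan's formula equals
$$\mathcal{L}_V\omega = d(i_V\omega) + i_V(d\omega).$$
The first summand restricts to $d\bigl((i_V\omega)|_{F_t}\bigr)$, which is exact on $F_t$ and hence trivial in $H^{n+1}(F_t)$. For the second summand, since $d\omega$ is a top-degree form on $\mathbb{C}^{n+2}$ and $dF\ne 0$ on $F^{-1}(\Delta^*)$, one can write $d\omega = dF\wedge\theta$ for some local $(n+1)$-form $\theta$; then
$$i_V(d\omega) = (i_V\,dF)\,\theta - dF\wedge(i_V\theta) = \theta - dF\wedge(i_V\theta),$$
and the second term vanishes upon restriction to $F_t$ because $dF$ pulls back to zero on $TF_t$. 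Hence $(i_V d\omega)|_{F_t}=\theta|_{F_t}$, which is by definition the Gelfand-Leray form $d\omega/dF$ evaluated on the fiber, yielding $\nabla_t[\omega]=[d\omega/dF]$ as desired.

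The main obstacle is not in the calculation itself but in setting up the trivialization rigorously and checking that the various choices are harmless. Specifically, one must verify: (i) that $[\mathcal{L}_V\omega|_{F_t}]$ is independent of the choice of lifting vector field $V$, which follows because any two such lifts differ by a vertical vector field $W$ with $dF(W)=0$, and then $\mathcal{L}_W\omega|_{F_t}=d(i_W\omega)|_{F_t}+(i_W d\omega)|_{F_t}$ is exact on $F_t$ (the second term vanishes by the same $dF\wedge\theta$ computation); (ii) that $[d\omega/dF]$ is well-defined, since two decompositions $d\omega=dF\wedge\theta=dF\wedge\theta'$ differ by $dF\wedge\eta$ with $dF\wedge dF\wedge\eta=0$, so $\theta-\theta'$ is divisible by $dF$ and restricts to zero on $F_t$; and (iii) that the Lie-derivative prescription truly represents the Gauss-Manin covariant derivative, which follows from the identity $\frac{d}{dt}\int_{\gamma_t}\omega = \int_{\gamma_{t_0}}\mathcal{L}_V\omega$ for horizontal families of cycles produced by the flow of $V$. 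Once these consistency checks are in place, the Cartan computation above establishes the theorem.
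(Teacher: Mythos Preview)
Your argument is correct, but the paper proceeds differently. Rather than lifting $\partial/\partial t$ to a vector field on the total space and invoking Cartan's formula, the paper converts the fiber integral into an ambient integral via the Leray residue formula
\[
\int_{\gamma(t)}\omega=\frac{1}{2\pi i}\int_{\delta\gamma(t)}\frac{dF\wedge\omega}{F-t},
\]
observes that the Leray tube $\delta\gamma(t)$ can be chosen independent of $t$, differentiates under the integral to obtain $(2\pi i)^{-1}\int_{\delta\gamma(t)}(dF\wedge\omega)/(F-t)^2$, and then uses the identity $\frac{dF\wedge\omega}{(F-t)^2}=\frac{d\omega}{F-t}-d\bigl(\frac{\omega}{F-t}\bigr)$ together with Stokes to arrive at $\int_{\gamma(t)}d\omega/dF$. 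Your Ehresmann-trivialization approach is more directly geometric and self-contained, avoiding the residue machinery entirely; the paper's approach, by contrast, is the classical Brieskorn--Leray computation and dovetails with the residue formalism already pervading the paper. One small point: in your check (ii) the phrasing ``differ by $dF\wedge\eta$ with $dF\wedge dF\wedge\eta=0$'' is garbled---what you need is that $dF\wedge(\theta-\theta')=0$ forces $\theta-\theta'$ to lie in the image of $dF\wedge\cdot$ (clear in local coordinates where $dF=dz_1$), hence restricts to zero on $F_t$; the conclusion is unaffected.
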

\begin{proof}
Given an arbitrary $t_0\in\Delta^*$. Let $\gamma (t)$ be a flat section of homology classes $\gamma (t) \in H_n(F_t)$ near $t_0$. We need to calculate the following
\begin{displaymath}
\partial_t\int_{\gamma (t)}\omega
\end{displaymath}
By using the residue formula \cite{Le}, we have 
\begin{displaymath}
\int_{\gamma (t)}\omega=\frac{1}{2\pi i}\int_{\delta\gamma (t)}\frac{dF\wedge\omega}{F-t}
\end{displaymath}
Here $\delta:H_{n+1}(F_t)\longrightarrow H_{n+2}(\mathbb{C}^{n+2}-F_t)$ is the Leray coboundary map, which is define by taking the boundary of a tubular neighborhood of the homology class in $H_{n+1}(F_t)$. In our case $\gamma (t)$ is a flat section in a small neighborhood of $t_0\in\Delta^*$, $\delta\gamma (t)$ can be taken independent of $t$. Then we have
\begin{displaymath}
\partial_t\int_{\gamma (t)}\omega=\frac{1}{2\pi i}\partial_t\int_{\delta\gamma (t)}\frac{dF\wedge\omega}{F-t}
\end{displaymath}
\begin{displaymath}
=\frac{1}{2\pi i}\int_{\delta\gamma (t)}\frac{dF\wedge\omega}{(F-t)^2}=\frac{1}{2\pi i}(\int_{\delta\gamma (t)}\frac{d\omega}{F-t}-d\frac{\omega}{F-t})
\end{displaymath}
\begin{displaymath}
=\frac{1}{2\pi i}\int_{\delta\gamma (t)}\frac{d\omega}{F-t}=\frac{1}{2\pi i}\int_{\delta\gamma (t)}\frac{dF\wedge \frac{d\omega}{dF}}{F-t}=\int_{\gamma (t)}\frac{d\omega}{dF}
\end{displaymath}
As we can take an arbitrary flat section $\gamma (t)$ near $t_0$, we have proved that 
\begin{displaymath}
\nabla_t[\omega]=[\frac{d\omega}{dF}].
\end{displaymath}
\end{proof}
Given an element $[\omega]\in H_F^{\prime}$ , we can identify it with $[dF\wedge\omega]\in H_F^{\prime\prime}$. Under this embedding, the action of the Gauss-Manin connection has the form
\begin{displaymath}
\nabla_t[dF\wedge \omega]=[d\omega].
\end{displaymath}
In our case, $F=f$ is a (quasi-)homogeneous polynomial, we have $zH_{f,0}^{\prime\prime}=H_{f,0}^{\prime}$. Consider the germ at 0$\in \Delta$, the action of Gauss-Manin connection on $[\beta]\in H_{f,0}^{\prime\prime}$  can be calculate as 
\begin{displaymath}
\nabla_z[\beta]=\nabla_zz^{-1}(z[\beta])=-\frac{[\beta]}{z}+\frac{\nabla_z(z[\beta])}{z}
\end{displaymath}
The last part $\frac{\nabla_z(z[\beta])}{z}$ can be calculated from the formula of Gauss-Manin connection on $H_{f,0}^{\prime}$.\par
Assume that $f(z_1,...,z_{n+2})$ is a quasi-homogeneous polynomial such that 
\begin{displaymath}
f(\lambda^{w_1}z_1,...,\lambda^{w_{n+2}}z_{n+2})=\lambda^{d}f(z_1,...,z_{n+2})
\end{displaymath}
Then we have 
\begin{displaymath}
f(z_1,...,z_{n+2})=\sum_{i=1}^{n+2}\frac{w_i}{d}z_i\partial_{z_i}f(z_1,...,z_{n+2})
\end{displaymath}
We can define a (n+1)-form $\xi$ as
\begin{displaymath}
\xi=\sum_{i=1}^{n+2}(-1)^{i-1}\frac{w_i}{d}z_i dz_1\wedge...\widehat{dz_i}...\wedge dz_{n+2}
\end{displaymath}
Then we have 
\begin{displaymath}
fdz_1\wedge...\wedge dz_{n+2}=df\wedge \xi
\end{displaymath}
With the above discussion, we can prove the following theorem.
\begin{thm}
Assume that $f(z_1,...,z_{n+2})$ is a quasi-homogeneous polynomial with weights $w=(\frac{w_1}{d},...,\frac{w_{n+2}}{d})$. Take a  monomial $\mathbb{C}$-basis $\{z^{\alpha^i}\}_{i=1}^{\mu}$ of $R_f$. Then under the holomorphic basis $\{[z^{\alpha^i}dz_1\wedge...\wedge dz_{n+2}]\}_{i=1}^{\mu}$ of $H_{f,0}^{\prime\prime}$, we have
\begin{displaymath}
z\nabla_z[z^{\alpha^i}dz_1\wedge...\wedge dz_{n+2}]=(<\alpha^i+1,w>-1)[z^{\alpha^i}dz_1\wedge...\wedge dz_{n+2}]
\end{displaymath}
\end{thm}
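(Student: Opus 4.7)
The plan is to reduce the computation to the Gauss-Manin formula $\nabla_z[df \wedge \omega] = [d\omega]$ on $H_{f,0}^{\prime}$ established in the preceding Theorem C.2, combining it with the identity $z\nabla_z[\beta] = \nabla_z(z[\beta]) - [\beta]$ (obtained by clearing the denominator in the relation displayed just before Theorem C.3) and with the quasi-homogeneity decomposition $f\, dz_1 \wedge \cdots \wedge dz_{n+2} = df \wedge \xi$ introduced in the text.

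Write $\beta_i := z^{\alpha^i} dz_1 \wedge \cdots \wedge dz_{n+2}$. First I would observe that since the action of the base coordinate on sections of $H_{f,0}^{\prime\prime}$ corresponds to multiplication by the polynomial $f$ on fiber-wise representatives, one has
\[
z \cdot \beta_i \;=\; f\, z^{\alpha^i} dz_1 \wedge \cdots \wedge dz_{n+2} \;=\; df \wedge (z^{\alpha^i} \xi),
\]
so $z[\beta_i]$ lies in $H_{f,0}^{\prime}$ and is identified, under the embedding $[\omega] \mapsto [df \wedge \omega]$, with the class of $\omega_i := z^{\alpha^i}\xi$. Theorem C.2 then gives $\nabla_z(z[\beta_i]) = [d(z^{\alpha^i}\xi)]$.

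Next I would evaluate $d(z^{\alpha^i}\xi)$ by the Leibniz rule. The piece $z^{\alpha^i} d\xi$ equals $\langle 1, w\rangle\, \beta_i$, since a direct check gives $d\xi = \bigl(\sum_j \tfrac{w_j}{d}\bigr)\, dz_1 \wedge \cdots \wedge dz_{n+2}$ once the two sign factors $(-1)^{j-1}$ arising from the wedge rearrangements multiply to $+1$. The piece $d(z^{\alpha^i}) \wedge \xi$ equals $\langle \alpha^i, w\rangle\, \beta_i$: expanding $d(z^{\alpha^i}) = \sum_k \alpha^i_k z^{\alpha^i - e_k}\, dz_k$, only the summand of $\xi$ with $dz_k$ omitted survives the wedge, producing the factor $\alpha^i_k \tfrac{w_k}{d} z_k z^{\alpha^i - e_k} = \alpha^i_k \tfrac{w_k}{d} z^{\alpha^i}$, and summing in $k$ yields $\langle \alpha^i, w\rangle\, \beta_i$. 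Adding the two contributions gives $\nabla_z(z[\beta_i]) = \langle \alpha^i + 1, w\rangle\, [\beta_i]$, whence the key relation above delivers
\[
z\nabla_z[\beta_i] \;=\; \nabla_z(z[\beta_i]) - [\beta_i] \;=\; \bigl(\langle \alpha^i + 1, w\rangle - 1\bigr)\, [\beta_i],
\]
which is the claimed formula.

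The main obstacle here is conceptual rather than computational: one must carefully distinguish the base coordinate $z$ on the disc $\Delta$ from the coordinates $z_1,\dots,z_{n+2}$ on $\mathbb{C}^{n+2}$, and verify that multiplication by the former corresponds to multiplication by the polynomial $f$ on fiber-wise representatives, so that the quasi-homogeneity decomposition of $f\, dz_1 \wedge \cdots \wedge dz_{n+2}$ genuinely places $z[\beta_i]$ into $H_{f,0}^{\prime}$ and makes Theorem C.2 applicable. Once that identification is in hand, the remainder is a routine bookkeeping exercise in differential forms.
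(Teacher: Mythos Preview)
Your proposal is correct and follows essentially the same route as the paper's proof: both use the Leibniz-type identity $z\nabla_z[\beta]=\nabla_z(z[\beta])-[\beta]$, identify $z[\beta_i]$ with $[df\wedge(z^{\alpha^i}\xi)]$ via quasi-homogeneity, apply the formula $\nabla_z[df\wedge\omega]=[d\omega]$, and then expand $d(z^{\alpha^i}\xi)$. The only cosmetic difference is that you split the latter computation via the Leibniz rule into the two pieces $z^{\alpha^i}d\xi$ and $d(z^{\alpha^i})\wedge\xi$, whereas the paper writes it directly as $\sum_j\tfrac{w_j}{d}\partial_j(z^{\alpha^i}z_j)\,dz_1\wedge\cdots\wedge dz_{n+2}$; these are the same calculation.
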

\begin{proof}
We have 
\begin{displaymath}
z\nabla_z[z^{\alpha^i}dz_1\wedge...\wedge dz_{n+2}]=-[z^{\alpha^i}dz_1\wedge...\wedge dz_{n+2}]+\nabla_zz[z^{\alpha^i}dz_1\wedge...\wedge dz_{n+2}]
\end{displaymath}
\begin{displaymath}
=-[z^{\alpha^i}dz_1\wedge...\wedge dz_{n+2}]+\nabla_z[fz^{\alpha^i}dz_1\wedge...\wedge dz_{n+2}]
\end{displaymath}
\begin{displaymath}
=-[z^{\alpha^i}dz_1\wedge...\wedge dz_{n+2}]+\nabla_z[z^{\alpha^i}df\wedge \xi]=-[z^{\alpha^i}dz_1\wedge...\wedge dz_{n+2}]+[d(z^{\alpha^i}\xi)]
\end{displaymath}
\begin{displaymath}
=-[z^{\alpha^i}dz_1\wedge...\wedge dz_{n+2}]+[\sum_{j=1}^{n+2}\frac{w_j}{d}\partial_j(z^{\alpha^i}\cdot z_j)dz_1\wedge...\wedge dz_{n+2}]
\end{displaymath}
\begin{displaymath}
=(<\alpha^i+1,w>-1)[z^{\alpha^i}dz_1\wedge...\wedge dz_{n+2}]
\end{displaymath}
\end{proof}
When $f$ is homogeneous polynomial of degree $n+2$, $w=(\frac{1}{n+2},...,\frac{1}{n+2})$, and $<\alpha^i+1,w>-1=<\alpha^i,w>$. So $[z^{\alpha^i}dz_1\wedge...\wedge dz_{n+2}]$ is invariant under the monodromy if and only if deg$z^{\alpha^i}$=$(n+2)k$ for some $k\in\mathbb{Z}_{\geq0}$.\par

\bibliographystyle{plain}

\end{document}